\theoremstyle{plain}
\newtheorem{Thm}{Theorem}[section]
\newtheorem{Lemm}[Thm]{Lemma}
\newtheorem{Cor}[Thm]{Corollary}
\newtheorem{Prop}[Thm]{Proposition}
\newtheorem{Fact}[Thm]{Fact}
\newtheorem{Conj}[Thm]{Conjecture}
\theoremstyle{definition}
\newtheorem{Def}[Thm]{Definition}
\newtheorem{Rem}[Thm]{Remark}
\newcommand{\Z}{\mathbb{Z}} 
\newcommand{\N}{\mathbb{N}}
\newcommand{\Q}{\mathbb{Q}} 
\newcommand{\R}{\mathbb{R}}
\newcommand{\alg}{^{\text{alg}}}
\newcommand{\X}{\mathcal{X}}
\newcommand{\Y}{\mathcal{Y}}
\newcommand{\ZZ}{\mathcal{Z}}
\newcommand{\B}{\mathcal{B}}
\newcommand{\F}{\mathcal{F}}
\newcommand{\CC}{\mathcal{C}}
\newcommand{\PP}{\mathbb{P}}
\newcommand{\Alg}{\mathbb{E}}
\newcommand{\A}{\mathcal{A}}
\newcommand{\e}{e}
\newcommand{\spanEndA}{\operatorname{span}_{\End(\Alg)}}
\newcommand{\spank}{\operatorname{span}}
\newcommand{\locus}{\operatorname{locus}}
\newcommand{\Fin}{\operatorname{Fin}}
\newcommand{\Sub}{\operatorname{Sub}}
\newcommand{\Mat}{\operatorname{Mat}}
\newcommand{\Tor}{\operatorname{Tor}}
\newcommand{\str}{\lhd}
\newcommand{\acl}{\operatorname{acl}}
\newcommand{\ld}{\operatorname{lin.d.}}
\newcommand{\trd}{\operatorname{tr.d.}}
\newcommand{\RM}{\operatorname{RM}}
\newcommand{\RU}{\operatorname{RU}}
\newcommand{\tp}{\operatorname{tp}}
\newcommand{\qftp}{\operatorname{qf-tp}}
\newcommand{\cl}{\operatorname{cl}}
\newcommand{\scl}{\operatorname{scl}}
\newcommand{\End}{\operatorname{End}}
\newcommand{\Aut}{\operatorname{Aut}}
\newcommand{\id}{\operatorname{id}}
\newcommand{\dd}{\operatorname{d}}
\newcommand{\fin}{_\text{fin}} 
\newcommand{\eps}{\epsilon} 
\newcommand{\subsetfin}{\subset_{\fin}}
\newlength {\xxxIndep}
\newlength {\yyyIndep}
\newcommand{\IndepOperator}{\operatorname{\hspace*{-0.4em}\mbox{\raisebox{-0.5ex
}[0pt][0pt]{$\overset{\textstyle\mid}{\smash{\smile}}$}}\hspace*{-0.4em}}}
\newcommand{\INDEP}[2][{\
}]{\hspace*{0.4em}\sideset{}{_{}^{#1}}\IndepOperator\limits_{#2}}
\begin{document}

\title{Theories of green points}
\author{Juan Diego Caycedo}
\address{Mathematisches Institut, Albert-Ludwigs-Universit\"at Freiburg, Eckerstr. 1, 79104 Freiburg, Deutschland.}
\email{juan-diego.caycedo@math.uni-freiburg.de}
\date{December 17, 2013}

\begin{abstract}
Poizat's construction of theories of fields with a multiplicative subgroup of \emph{green points} is extended in several directions: First, we also construct similar theories where the green points form a divisible $\End(E)$-submodule of an elliptic curve $E$. Second, the subgroup (submodule) of green points is allowed to have torsion, whereas in Poizat's work this more general case was only dealt with assuming the unproven Conjecture on Intersections with Tori (CIT). Third, motivated by Zilber's work on connections with non-commutative geometry, we construct a version of the theories of green points in the multiplicative group case for which the distinguished subgroup is not divisible but a $\Z$-group, which we call theories of \emph{emerald points}. In a subsequent joint paper with Boris Zilber we find natural models for the constructed theories.
\end{abstract}

\maketitle

%\tableofcontents

\bibliographystyle{alpha}
%%%%%%%%%%%%%%%%%%%%%

\section{Introduction}

In this paper, Hrushovski's predimension method is used to construct certain theories of expansions of the natural algebraic structure on the multiplicative group and on an elliptic curve by a predicate for a subgroup. 
In the case of the multiplicative group, these are the theories of fields with green points constructed in \cite{PzEq3} by Poizat. 
Following the convention introduced by Poizat, we call the elements of the distinguished subgroup \emph{green points} and the other elements of the structure \emph{white points}, also in the elliptic curve case. The constructed theories will be called \emph{theories of green points}.
The essential property of these theories is that the dimension of the distinguished subgroup is half of that of the ambient algebraic group. Of course, this dimension is not of algebro-geometric nature but rather is the model-theoretic notion of Morley rank, and the distinguished subgroup is indeed far from being an algebraic subgroup.

%%%%%%%%%%%%%%%%

In fact, Poizat constructed several $\omega$-stable theories of expansions of algebraically closed fields by a unary predicate such that the Morley rank of the field is twice the Morley rank of the predicate. In the initial construction (\cite{PzEq2}), the distinguished subset is not required to have any special properties from the outset. In this case, he called the elements of this subset \emph{black points} and the other elements of the domain \emph{white points}, in the resulting theory the Morley rank of the predicate is $\omega$ and the Morley rank of the domain is $\omega \cdot 2$, and he also carried out a further construction, called \emph{the collapse}, obtaining a theory of Morley rank two, where the predicate has rank one. Later Poizat also constructed similar theories in which the interpretation of the predicate is a subgroup of the additive group of the field, case in which he called the elements of the subgroup \emph{red points}, and theories in which the interpretation of the predicate is a multiplicative subgroup, case in which he called the elements of the subgroup \emph{green points} (\cite{PzEq3}). In the case of red points, his theory has Morley rank $\omega \cdot 2$ in the case of fields of positive characteristic and rank $\omega^2 \cdot 2$ in the characteristic zero case. In the green case, the field always has characteristic zero and the theory has Morley rank $\omega \cdot 2$.

%%%%%%%%%%%%%%%%

Let us discuss the main reasons for interest in the theories of fields with green points.
The first reason is that the construction has direct relevance for the question of existence of so-called \emph{bad fields}. A bad field is an expansion of an algebraically closed field by a predicate for a multiplicative subgroup whose Morley rank is finite and greater than one. The question of whether such structures exist arose in work on the Algebraicity Conjecture due to Cherlin and Zilber, which states that every simple group of finite Morley rank is an algebraic group over an algebraically closed field and which is open since the late 1970s.
The non-existence of bad fields would have simplified some of the work on the conjecture (see, for example, the introduction of \cite{BadField}). 
Eventually, however, it was suspected that bad fields of characteristic zero could be obtained by Hrushovski's method. Poizat's construction of fields with green points was effectively the first step in this direction. It produced an analogue of a bad field of infinite rank. Moreover, carrying out the corresponding collapse would complete the construction of a bad field. This was subsequently done by Baudisch, Hils, Mart\'in Pizarro and Wagner in \cite{BadField}.
Let us also mention that, by a result of Wagner (\cite{WagnerBad}), the existence of bad fields of positive characteristic $p$ would imply that there are only finitely many $p$-Mersenne primes. Bad fields of positive characteristic are therefore not expected to exist, but proving their nonexistence is, for the same reason, difficult.

The second reason is that the construction of the theories of fields with green points involves strong algebro-geometric results in an essential way. Besides being interesting in its own right, this technical difficulty also exposes connections with Zilber's construction of fields with pseudo-exponentiation, where similar results have to be applied.
In both cases questions of intersections of algebraic subvarieties of algebraic tori with algebraic subgroups naturally appear. In this regard, Zilber's Conjecture on intersections with tori (CIT), from \cite{ZCIT}, answers all the questions involved, but its general validity remains an open question. The conjecture was indeed motivated by Zilber's work on the model theory complex exponentiation.
A partial result, usually called Weak CIT, follows from a theorem of Ax in differential algebra by a model-theoretic argument, as showed by Poizat in \cite{PzEq3} and by Zilber in \cite{ZCIT}. Remarkably, for the construction of theories of green points, Weak CIT suffices. This was shown by Poizat in \cite{PzEq3} in the case where the theory requires the green subgroup to be torsion-free and he stated the question of whether this was also true for arbitrary torsion in the green subgroup. We answer this question positively in Section 4.
Another strong algebraic result that is needed in the construction is the Thumbtack Lemma, which was first proved by Zilber in \cite{ZCovers} (see also \cite{BZCovers}) and which is now available in a very general form by a theorem of Bays, Gavrilovich and Hils \cite{BGH}. The Thumbtack Lemma is used in an essential way to prove that the theories of green points are $\omega$-stable. This point is not explicit in Poizat's original paper and has only become clear in more recent works on the topic, e.g. \cite{HilsGreenGen}.

%%%%%%%%%%%%%%%%%%%%

A third reason is provided by \cite{ZBicol}, where Zilber shows that, assuming Schanuel's conjecture, a certain explicitly given expansion of the complex field by a subgroup of the multiplicative group is a model of Poizat's theory of fields with green points. In an upcoming joint paper with Zilber, we correct, improve and extend the arguments from \cite{ZBicol} in order to find natural models for the theories of green points constructed in this paper. In general we will still have to assume a form of Schanuel's conjecture, or its analogue for the exponential function of the elliptic curve in question, but in generic (in a certain sense) cases our results will be unconditional.

%%%%%%%%%%%%%%%%%%%

The paper is organised as follows: In Section~\ref{sec:str}, the theories of green points are obtained as the complete theories of certain structures that are constructed there. 
An axiomatization of the theories is then found in Section~\ref{sec:theory} and their main model-theoretic properties are established in Section~\ref{sec:mth}. 
In Section~\ref{sec:eme}, we construct a version of the theories of green points in the multiplicative group case for which the distinguished subgroup is not divisible but a $\Z$-group, which we call theories of \emph{emerald points}. This part is motivated by work of Zilber relating certain expansions of the complex numbers, which will be models of these theories, to the non-commutative tori from non-commutative geometry. 

%%%%%%%%%%%%%%%%%%%%

\textbf{Acknowledgements:} The results in this paper were part of my doctoral thesis at the University of Oxford under the supervision of Boris Zilber. This research was funded by the Marie Curie Research Training Network MODNET. I would like to thank Martin Bays, Martin Hils, Philipp Hieronymi and Boris Zilber for very helpful discussions.
% (contract number MRTN-CT-2004-512234)

\section{Structures} \label{sec:str}

In this section, we start by introducing the classes of structures that will be considered throughout the paper and, in this context, the usual tools of predimension constructions. We then show that, after fixing some initial data,  each of these classes contains a certain kind of ``generic'' structures, called \emph{rich structures}, all of which satisfy the same complete theory. The complete theories obtained in this way will be the \emph{theories of green points}, which will then be studied in Sections \ref{sec:theory} and \ref{sec:mth}.

\subsection{Preliminaries}

\subsubsection{Algebraic groups}

Let $\Alg$ be the multiplicative group or an elliptic curve over a field $k_0$ of characteristic 0. We use additive notation for the group operation on $\Alg$. 
We denote by $\End(\Alg)$ the ring of algebraic endomorphisms of $\Alg$.
%
% The endomorphisms of $\Alg$ given by regular functions (i.e. given by polynomials on affine charts) form a ring, where addition is induced by the group operation of $\A$ and multiplication is given by composition. We denote this ring by $\End(\Alg)$. 

% If $\Alg$ is the multiplicative group then the ring $\End(\Alg)$ consists only of the maps $x \mapsto n \cdot x$ (in additive notation) for $n \in \Z$ and is thus isomorphic to $\Z$. If $\Alg$ is an elliptic curve then $\End(\Alg)$ is isomorphic to either $\Z$ or $\Z[\theta]$, for some imaginary quadratic algebraic integer $\theta$. In the latter case it is said that $\Alg$ has complex multiplication (CM) by $\Z[\theta]$.

%%%%%%%%%%%%%%%%%

Let $A = \Alg(K)$ with $K$ an algebraically closed field extending $k_0$. 
Note that $A$ is an $\End(\Alg)$-module. We have a dimension function on $A$ given by the $\End(\Alg)$-linear dimension, which we denote by $\ld_{\End(\Alg)}$, or simply by $\ld$. We use $\langle Y \rangle$ or $\spanEndA(Y)$ to denote the $\End(\Alg)$-span of a subset $Y$ of $A$. 

Since $K$ is algebraically closed, $A$ is divisible. Also, the ring $\End(\Alg)$ is an integral domain and $k_{\Alg} := \End(\Alg) \otimes_{\Z} \Q$ is its fraction field. The quotient $A/\Tor(A)$ is a $k_{\Alg}$-vector space and for every $Y \subset A$, $\ld_{\End(\Alg)}(Y)$ equals the $k_{\Alg}$-linear dimension of $\phi(Z)$ in $A/\Tor(A)$, where $\phi:A \to A/\Tor(A)$ is the quotient map. The pregeometry on $A/\Tor(A)$ given by the $k_{\Alg}$-span induces a pregeometry on $A$ that we shall denote by $\spank$; this means that for $Y \subset A$, $\spank(Y) = \phi^{-1}(\operatorname{span}_{k_{\Alg}} (\phi(Y)))$.

%%%%%%%%%%%
The algebraic group structure on $A$ induces an algebraic group structure on each cartesian power $A^n$, $n \geq 1$. There is the following characterisation of the algebraic subgroups of $A^n$; for the case of the multiplicative group a proof can be found, for example, in \cite[Section 3.2]{Bombieri}, for the elliptic curve case we refer to \cite[Lemma 1]{Viada}.

Every algebraic subgroup $C$ of $A^n$ is defined by a system of equations of the form 
\[
m \cdot y = 0,
\]
where $y$ is a $n$-tuple of variables and $m$, and $m$ is a $k\times n$-matrix with entries in $\End(\Alg)$, for some $k$. 

Furthermore, if $m$ has rank $k$, then $C$ has dimension $n-k$ (as a Zariski closed set). In this case we say that $C$ has \emph{codimension} $k$.

It follows that a coset $\alpha + C$ of an algebraic subgroup $C$ is defined by a system of equations of the form
\[
m \cdot y = b,
\]
with $y,m$ as before and $b \in A^k$. 

Note that for any algebraically closed intermediate field $k_0 \subset K' \subset K$ we have: the coset $\alpha + C$ is defined over $K'$, if and only if, $\alpha + C$ has a $K'$-rational point, if and only if, $\alpha + C = \alpha' + C$ for some $\alpha'\in \Alg(K')^n$.

\subsubsection{Algebraic varieties as structures}

Let $\Alg$ be affine or projective algebraic variety defined over a field $k_0$.

Let $L_{\Alg}$ be the first-order language consisting of an $n$-ary predicate for each subvariety of $\Alg^n$ defined over $k_0$, $n \geq 1$.

Let $K$ be an algebraically closed field extending $k_0$. Consider the natural $L_{\Alg}$-structure on $\Alg(K)$:
\[
(\Alg(K), (W(K))_{W \in L_{\Alg}}).
\]

Let $T_{\Alg}$ denote the complete theory of this structure. 

The following is the key fact about $T_{\Alg}$ that we shall use. It is well-known, but finding a detailed proof in the literature seems not so easy, there is one such in \cite[Fact A.2.1]{Bays}.
% COMMENT Is this sentence OK?

\begin{Fact} \label{fact-biin}
Let $ACF_{k_0}$ be the theory of algebraically closed fields extending $k_0$ in the expansion of the field language by constants for each element of $k_0$. Then, $T_{\Alg}$ is bi-interpretable with $ACF_{k_0}$.
\end{Fact}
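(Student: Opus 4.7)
My plan is to exhibit interpretations in both directions and then verify that each composition is definably isomorphic to the identity.

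Interpreting $T_{\Alg}$ in $ACF_{k_0}$ is immediate: fix an embedding of $\Alg$ as a $k_0$-subvariety of $\mathbb{A}^N$ (or work in a finite affine atlas in the projective case). Then $\Alg(K) \subset K^N$ is Zariski closed, and each predicate $W \subset \Alg^n$ in $L_{\Alg}$ is a Zariski closed subset of $K^{nN}$ cut out by polynomials over $k_0$. For the reverse direction, I would assume without loss of generality that $\Alg$ is irreducible of positive dimension, and pick a non-constant rational map $f : \Alg \dashrightarrow \PP^1$ defined over $k_0$. On the open regularity locus $U$ of $f$, the fibre equivalence $P \sim Q \Leftrightarrow f(P) = f(Q)$ is defined by a $k_0$-subvariety of $\Alg^2$, and the quotient $U(K)/{\sim}$ is in bijection with a cofinite subset of $\mathbb{A}^1(K) = K$. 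The field operations on $K$ are rational morphisms; pulled back through $f$, they give $k_0$-subvarieties of $\Alg^3$ that serve as predicates in $L_{\Alg}$. Patching finitely many such charts (or, equivalently, using $\PP^1$ with its standard atlas) recovers all of $K$ as a definable quotient, yielding an interpretation of $ACF_{k_0}$ in $T_{\Alg}$.

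To upgrade this to bi-interpretability, I would check that both round-trip compositions are definably isomorphic to the identity. For $ACF_{k_0} \to T_{\Alg} \to ACF_{k_0}$, the reconstructed field maps to the original $K$ via $[P]_\sim \mapsto f(P)$, whose graph is $k_0$-definable in $ACF_{k_0}$. For $T_{\Alg} \to ACF_{k_0} \to T_{\Alg}$, each coordinate function of the embedding $\Alg \hookrightarrow \mathbb{A}^N$ is a rational function on $\Alg$; transported through the identification $K \cong \Alg/{\sim}$ given by $f$, it becomes a $k_0$-subvariety of $\Alg^2$, so the coordinate predicates together specify the required definable bijection between $\Alg(K)$ and its re-embedded copy in $K^N$.

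The main technical nuisance is bookkeeping around the finitely many exceptional loci: poles and base points of $f$, points at infinity on $\PP^1$, and small sets where the pulled-back field operations fail to be graphs of the operations on $\Alg/{\sim}$. These are handled by using a finite atlas or several rational functions covering $\Alg$ and agreeing on overlaps. Once that is organised carefully, the verification of bi-interpretability becomes formal, and the whole argument reduces to standard algebraic geometry over $k_0$, as in the detailed treatment of \cite[Fact A.2.1]{Bays}.
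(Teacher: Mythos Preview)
The paper does not actually give a proof of this statement: it labels it a \emph{Fact}, remarks that it is well-known, and refers to \cite[Fact A.2.1]{Bays} for a detailed argument. Your proposal is therefore not to be compared against a proof in the paper but against the general strategy one finds in the cited reference, and in that respect your outline is correct and standard: interpret $T_{\Alg}$ in $ACF_{k_0}$ via the ambient embedding of $\Alg$ in affine or projective space, and interpret $ACF_{k_0}$ back in $T_{\Alg}$ by recovering the field as a quotient by the fibres of a non-constant rational function $f:\Alg\dashrightarrow\PP^1$, with the field operations pulled back as $k_0$-subvarieties of powers of $\Alg$.

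Two small remarks. First, in the paper $\Alg$ is always either $\G_m$ or an elliptic curve, so irreducibility and positive dimension are given; your ``without loss of generality'' is really an assumption that holds in the intended context but is not a reduction in general. Second, in these two concrete cases the rational map $f$ can be chosen very explicitly (the identity coordinate for $\G_m$, or the $x$-coordinate for an elliptic curve in Weierstrass form), which makes the patching over exceptional loci essentially trivial and avoids the general atlas bookkeeping you flag at the end. With that said, your sketch matches the argument one would expect and indeed the one in the reference you cite.
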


We consider $\Alg$ as embedded in affine or projective space, so we have a natural interpretation of $T_{\Alg}$ in $ACF_{k_0}$. In fact, there is an interpretation of $ACF_{k_0}$ in $T_{\Alg}$ that gives the bi-interpretability when paired with the natural interpretation of $T_{\Alg}$ in $ACF_{k_0}$. 

Fact~\ref{fact-biin} has several relevant consequences: the theory $T_{\Alg}$ does not depend on the choice of $K$, and every model $A$ of $T_{\Alg}$ is of the form $A = \Alg(K)$ for a corresponding algebraically closed field $K$ extending $k_0$. Also, $T_{\Alg}$ has quantifier elimination. 
% COMMENT Give proofs?

Suppose $\Alg$ is irreducible and 1-dimensional. Then $T_{\Alg}$ is strongly minimal.
Therefore $\acl_{T_{\Alg}}$ induces a pregeometry on every model $A$ of $T_{\Alg}$. By the bi-interpretability, $\acl_{T_{\Alg}}^{\text{eq}}$ equals $\acl_{ACF_{k_0}}^{\text{eq}}$.  From this we get that for any tuple of elements $b$ of a model of $T_{\Alg}$, the $\acl_{T_{\Alg}}$-dimension of $b$ equals the $\acl_{ACF_{k_0}}^{\text{eq}}$-dimension of $b$ when viewed as living in $K^{\text{eq}}$, where $K$ is the field corresponding to $A$. The $\acl_{ACF_{k_0}}^{\text{eq}}$-dimension of $b$ equals the transcendence degree over $k_0$ of any normalised representation of $b$ in homogeneous coordinates, which we shall denote by $\trd(b/k_0)$, or simply by $\trd(b)$, since $\Alg$ and $k_0$ will be fixed throughout.

\subsection{Structures}

Let $L = L_{\Alg} \cup \{G\}$ be the expansion of the language $L_{\Alg}$ by a unary predicate $G$. 

Let $\CC$ be the class of all $L$-structures $\A = (A,G)$ where $A \vDash T_{\Alg}$ and $G$ is a divisible $\End(\Alg)$-submodule of $A$. Note that the class $\CC$ is elementary.  

Following a convention introduced by Poizat, given an $L$-structure $\A = (A,G)$ in $\CC$, we call the elements of $G$ \emph{green points} and the elements of $A \setminus G$ \emph{white points}.

Let $\A = (A,G)$ be a structure in $\CC$. Let $\cl_0^{\A}$ be the pregeometry on $A$ induced by the pregeometry  given by the $k_{\Alg}$-linear span on the quotient $A/\Tor(A)$.

Note that if $\A,\B$ are structures in $\CC$ with $\A \subset \B$, then $A$ is $\cl_0^{\B}$-closed and for all $Y \subset A$, $\cl_0^{\A}(Y) = \cl_0^{\B}(Y)$. Therefore we shall write simply $\cl_0$ for $\cl_0^{\A}$ for any $\A$.  
% In the terminology of ..., $\cl_0$ is a pregeometry for the class $\CC$. 

Notice that the dimension function associated to $\cl_0$ coincides with the $\End(\Alg)$-linear dimension. Henceforth we shall refer to the $\cl_0$-dimension simply as linear dimension and write $\ld$ for it. Moreover, 
for a subset $X$ of $A$, we also denote the set $\cl_0(X)$ by $\spank(X)$. Also notice that for any subset $X$ of $A$, we have $\cl_0(A) \subset \acl_{T_{\Alg}}(X)$, and therefore $\trd(X) \subset \ld(X)$.

Let us denote by $\Sub \CC$ the class of substructures of structures in $\CC$ whose domain is a $\cl_0$-closed set.

%%% LOCALISATION OF PREGEOMETRIES AND DIMENSIONS

Let us now recall some notions related to pregeometries, or more generally, closure operators (i.e. operators on a power set satisfying the pregeometry axioms except for possibly the exchange axiom).

For any closure operator $\cl$ on a set $A$ and any subset Y of $A$, the \emph{localisation of $\cl$ at $Y$} is the closure operator $\cl_Y$ given by $\cl_Y(X) = \cl(X \cup Y)$. We also write $\cl(X/Y)$ for $\cl_Y(X)$. If $\cl$ is a pregeometry with associated dimension function $\dd$, then $\cl_Y$ is also a pregeometry and we denote its associated dimension function by $\dd_Y$. We also write $\dd(X/Y)$ for $\dd_Y(X)$, and refer to this value as the \emph{dimension of $X$ over $Y$}. 

If $Y$ is of finite $\cl$-dimension, then we have the formula $\dd_Y(X) = \dd(XY) - \dd(Y)$ for any $X$ of finite $\cl$-dimension. Also, for an arbitrary subset $Y$ of $A$, we have: for any $X \subsetfin A$,
\[
\dd_Y(X) := \min_{Y' \subsetfin Y} \dd(X/Y').
\]

The dimension function $\dd$ of a pregeometry $\cl$ on $A$ the following \emph{addition formula} holds: for all $X,Y,Z \subsetfin A$ with $X \supset Y \supset Z$,
\[
\dd(X/Z) = \dd(X/Y) + \dd(Y/Z).
\]

A pregeometry $\cl$ is said to be \emph{modular} if the lattice of $\cl$-closed sets is a modular. Equivalently, $\cl$ is modular if the corresponding dimension function $\dd$ has the property that for all $\cl$-closed sets $X,Y$,
\[
\dd(X \lor Y/ Y) = \dd(X / X \land Y),
\]
where the operations $\lor$ and $\land$ are taken in the lattice of $\cl$-closed sets. The pregeometry given by the linear span in a vector space is always modular. From this last fact, it is easy to see that $\cl_0$ is a modular pregeometry. Notice that in the lattice of $\cl_0$-closed sets $X \lor Y = X + Y$ and $X \land Y = X \cap Y$.

\subsection{The predimension function}

%%% DEFINITION

Consider the predimension function
\footnote{The expression \emph{predimension function} refers to functions that have a special role in our constructions, it does not, however, imply any properties of the function in question.}
$\delta^{\A}$ defined on the finite dimensional $\cl_0$-closed subsets $Y$ of a structure $\A \in\CC$ by
\[
\delta^{\A}(Y) = 2 \trd(y) - \ld(Y \cap G).
\]
Notice that if $\B$ is also a structure in $\CC$ with $\A \subset \B$, then $\delta^{\A}(Y) = \delta^{\B}(Y)$. Thus, we shall henceforth drop the superindex and write simply $\delta$. For any subset $Y$ of $A$ of finite $\cl_0$-dimension (not necessarily closed), we define the value of $\delta(Y)$ to be $\delta(\cl_0(Y))$.

%%% LOCALISATION OF PREDIMENSION

For any finite dimensional $\cl_0$-closed set $Y$, the \emph{localisation of $\delta$ at $Y$}, $\delta_Y$, is the function given by 
\[
\delta_Y(X) = \delta(XY) - \delta(Y),
\]
for any finite dimensional $\cl_0$-closed set $X$. We also write $\delta(X/Y)$ for the value $\delta_Y(X)$, and call it the \emph{predimension of $X$ over $Y$}. For any finite dimensional $Y$, not necessarily $\cl_0$-closed, we define $\delta_Y$ to be $\delta_{\cl_0(Y)}$.

% COMMENT: The following definition seems to give the wrong values:
% If $Y$ is infinite dimensional, 
% \[
% \delta_Y(x) = \min \{ \delta_{Y'}(X) : Y' \subset Y \text{ is $\cl_0$-closed and finite-dimensional} \}.
% \]

The following two lemmas establish the main properties of the predimension function. The first one is an immediate consequence of the addition formulas for the dimension functions $\trd$ and $\ld$ we have the following.

%%% ADDITION FORMULA

\begin{Lemm}
The predimension function $\delta$ satisfies the following \emph{addition formula}: for all finite dimensional $\cl_0$-closed sets $X,Y,Z$ with $X \supset Y \supset Z$,
\[
\delta(X/Z) = \delta(X/Y) + \delta(Y/Z).
\]
\end{Lemm}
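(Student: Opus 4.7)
The plan is to unwind the definitions on both sides and use the addition formulas for $\trd$ and $\ld$ to match them up; the argument is essentially a telescoping computation. Since $X, Y, Z$ are $\cl_0$-closed with $X \supset Y \supset Z$, we have $XY = X$, $YZ = Y$ and $XZ = X$, so from the definition of localisation
\[
\delta(X/Y) = \delta(X) - \delta(Y), \quad \delta(Y/Z) = \delta(Y) - \delta(Z), \quad \delta(X/Z) = \delta(X) - \delta(Z).
\]
Adding the first two gives $\delta(X/Z)$ directly, which already proves the identity. To make the role of the addition formulas explicit (as the author indicates), I would rewrite
\[
\delta(X/Z) = 2\bigl(\trd(X) - \trd(Z)\bigr) - \bigl(\ld(X \cap G) - \ld(Z \cap G)\bigr),
\]
identify these differences with the relative dimensions $\trd(X/Z)$ and $\ld((X \cap G)/(Z \cap G))$ using the formula $\dd_Y(X) = \dd(XY) - \dd(Y)$ recalled in the preliminaries, and then apply the addition formulas for $\trd$ (at $Y$) and for $\ld$ (at $Y \cap G$) to split each relative dimension into the two pieces. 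Regrouping yields exactly $\delta(X/Y) + \delta(Y/Z)$.

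There is no real obstacle here; the only bookkeeping point is to verify that $Z \cap G \subset Y \cap G \subset X \cap G$ (immediate from $Z \subset Y \subset X$), so that the formula $\dd_Y(X) = \dd(XY) - \dd(Y)$ applies to the $\ld$-terms as well. One could also note that the finite-dimensionality of $X$ under $\cl_0$ ensures finite-dimensionality of all subsets involved under both $\trd$ and $\ld$ (since $\trd(X) \leq \ld(X) < \infty$), so every quantity appearing in the computation is well-defined and finite.
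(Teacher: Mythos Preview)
Your proposal is correct and matches the paper's approach: the paper simply states that the lemma is an immediate consequence of the addition formulas for $\trd$ and $\ld$, and your computation spells this out (the telescoping argument you give first is of course equivalent).
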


\begin{Rem}
It is easy to see that the above lemma can be rephrased as follows: for all finite dimensional $\cl_0$-closed sets $X,Y,Z$, 
\[
\delta(X \lor Y/Z) = \delta(Y/Z) +\delta(X/Y \lor Z).
\]
We shall use the expression \emph{addition formula} to refer to either of the two equivalent formulations.
\end{Rem}

%%%%%%%%%%%%%%%%%%%%%%%%%%%%%%%%%%%%%%%%%%%%%%%%%%%%%%%%%%%%%%%%%%%%%%%%%%%%%%

%%% SUBMODULARITY

\begin{Lemm}
The predimension function $\delta$ is \emph{submodular} with respect to the pregeometry $\cl_0$, i.e. for all finitely generated $\cl_0$-closed sets $X,Y$, 
\[
\delta(X \lor Y/ Y) \leq \delta(X / X \land Y),
\]
with the operations $\lor$ and $\land$ taken in lattice of finitely generated $\cl_0$-closed subsets of $A$.
\end{Lemm}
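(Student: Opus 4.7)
The plan is to unpack the definition $\delta = 2\trd - \ld(\cdot \cap G)$ and prove the submodularity inequality for each of the two terms separately, with opposite directions conspiring to give submodularity of $\delta$. Concretely, rewrite the desired inequality in its equivalent symmetric form
\[
\delta(X \lor Y) + \delta(X \land Y) \leq \delta(X) + \delta(Y),
\]
where $X \lor Y = X + Y$ (the $\End(\Alg)$-module sum, which is again $\cl_0$-closed) and $X \land Y = X \cap Y$. It then suffices to establish
\[
\trd(X+Y) + \trd(X\cap Y) \leq \trd(X) + \trd(Y)
\qquad\text{and}\qquad
\ld((X+Y)\cap G) + \ld(X\cap Y \cap G) \geq \ld(X \cap G) + \ld(Y \cap G).
\]

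For the first inequality, I would reduce the statement about $X+Y$ to a statement about the set-theoretic union $X \cup Y$. Since $\End(\Alg)$ acts by morphisms defined over $k_0$, the $\End(\Alg)$-span (and hence $\cl_0$) is contained in $\acl_{T_{\Alg}}$, so $\trd(X+Y) = \trd(X \cup Y)$. The inequality $\trd(X \cup Y) + \trd(X \cap Y) \leq \trd(X) + \trd(Y)$ is just submodularity of the transcendence-degree pregeometry (which holds for any pregeometry, applied here to $\acl_{T_{\Alg}}$).

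For the second inequality, I would use that the pregeometry of $\End(\Alg)$-linear span is modular, so its dimension function $\ld$ satisfies equality: for any two $\cl_0$-closed sets $U,V$, one has $\ld(U + V) + \ld(U \cap V) = \ld(U) + \ld(V)$. Applying this to $U = X \cap G$ and $V = Y \cap G$ gives
\[
\ld((X\cap G)+(Y\cap G)) + \ld(X\cap Y\cap G) = \ld(X\cap G) + \ld(Y\cap G),
\]
and then I would observe the (possibly strict) inclusion $(X\cap G) + (Y\cap G) \subseteq (X+Y)\cap G$, whence $\ld((X+Y)\cap G) \geq \ld((X\cap G)+(Y\cap G))$, yielding the desired supermodular inequality for the $G$-term.

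Adding twice the first inequality to the negation of the second then gives the sub-modularity of $\delta$. The only subtle point—which is really the whole content of the lemma—is the asymmetry in direction: modularity of $\cl_0$ cuts cleanly on the $\ld$-side but only gives an inequality after intersecting with $G$, because $(X\cap G)+(Y\cap G)$ can be properly contained in $(X+Y)\cap G$. This is precisely what makes $\delta$ fail to be additive and is the source of the predimension being nontrivial; on the $\trd$-side it is only ordinary matroid submodularity.
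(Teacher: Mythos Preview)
Your proof is correct and follows essentially the same approach as the paper's: both arguments split $\delta$ into its $\trd$ and $\ld(\cdot\cap G)$ parts, use submodularity of $\trd$ with respect to $\cl_0$, and obtain supermodularity of the green term from the modularity of $\ld$ together with the inclusion $(X\cap G)+(Y\cap G)\subseteq (X+Y)\cap G$. The only cosmetic difference is that you work with the symmetric form $\delta(X\lor Y)+\delta(X\land Y)\leq\delta(X)+\delta(Y)$, whereas the paper carries out the same computation in the relative form $\delta(X+Y/Y)\leq\delta(X/X\cap Y)$.
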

\begin{proof}
Note that $\trd$ is submodular with respect to $\cl_0$ (indeed, the dimension function of a pregeometry $\cl$ is always submodular with respect $\cl$ and also with respect to any weaker pregeometry, i.e. any pregeometry $\cl'$ such that $\cl'(X) \subset \cl(X)$ for all $X$). 
% COMMENT give reference for above?
Hence for all finite dimensional $\cl_0$-closed sets $X,Y$,
\begin{equation} \label{eq:submodG1}
\trd(X + Y / Y) \leq \trd(X/ X \cap Y).
\end{equation}
Let us look at the negative term in the definition of $\delta_G$ and consider the function $\ld_G(y):= \ld(\spank(y)\cap G)$, for $y \subsetfin A$. We note that it satisfies the following \emph{supermodularity} property: for all finite dimensional $\cl_0$-closed sets $X,Y$,
\begin{equation} \label{eq:submodG2}
\ld_G(X + Y/Y) \geq \ld_G(X/X \cap Y).
\end{equation}
One can see this as follows:
\begin{align*}
\ld_G( X/X\cap Y ) &= \ld( X\cap G/ X\cap Y\cap G )\\
                   &= \ld( X\cap G/ (X\cap G) \cap (Y\cap G) )\\
                   &= \ld( (X\cap G) + (Y\cap G) / Y\cap G )
                   \ \text{ (by modularity of $\spank$)}\\
                   &\leq \ld( (X+Y)\cap G / Y\cap G ).
\end{align*}
Thus, for all finite dimensional $\cl_0$-closed sets $X,Y$, we have, substracting \ref{eq:submodG2} from \ref{eq:submodG1},
\begin{equation} \label{eq:submodG}
\delta(X + Y / Y) \leq \delta(X/ X \cap Y).
\end{equation}
\end{proof}

\begin{Rem}
Let us note that the submodularity of the predimension function $\delta$ with respect to $\cl_0$ can also be expressed as follows: for all finite $x,y \subset A$,  
\[
\delta(xy/ Y) \leq \delta(x / X \cap Y),
\]
where $X = \cl_0(x)$ and $Y=\cl_0(Y)$.
\end{Rem}

%%%%%%%%%%%%%%%%%%%%%%%%%%%%%%%

% We now derive a useful inequality from the above lemmas.

% % Include the proof of the following lemma?

% \begin{Lemm} \label{delta-submol-ineq}
% For all finite dimensional $\cl_0$-closed sets $X$, $Y$, $Z$ with $Z \subset Y$,
% \[
% \delta(X/Z) \geq \delta(X \land Y/Z) + \delta(X/Y)
% \]
% \end{Lemm}
% \begin{proof}
% \begin{align*}
% \delta(X/Z) &= \delta(X \lor (X \land Y)/Z) \text{(as $X = X \lor (X \land Y)$)}\\  
%             &= \delta(X \land Y/Z) + \delta(X/Z \lor (X \land Y)) \text{ (by the addition formula)}\\
%             &= \delta(X \land Y/Z) + \delta(X/(Z \lor X) \land Y)) \text{ (by the modularity of $\cl_0$)}\\
%             &= \delta(X \land Y/Z) + \delta(X \lor Z/ (Z \lor X) \land Y)) \text{ (for $Z \subset (Z \lor X) \land Y$)}\\
%             &\geq \delta(X \land Y/Z) + \delta((X \lor Z) \lor Y/Y) \text{ (by submodularity of $\delta_Y$ w.r.t. $(\cl_0)_Y$)}\\
%             &= \delta(X \land Y/Z) + \delta(X/Y) \text{ (for $Z \subset Y$)}
% \end{align*}
% \end{proof}

\subsection{Strong sets}

%%% STRONG SETS + LEMMA

\begin{Def}
Let $\A \in \CC$ and let $X$ be a $\cl_0$-closed subset of $A$. 

A finite dimensional $\cl_0$-closed subset $Y$ of $X$ is \emph{strong} in $X$, written $Y \str X$, if for all $x \subsetfin X$, $\delta(x/Y) \geq 0$.

An arbitrary $\cl_0$-closed subset $Y$ of $X$ is strong in $X$, also written $Y \str X$, if it is the union of a directed system of finite dimensional strong $\cl_0$-closed subsets of $X$ with respect to inclusions. Equivalently, $Y$ is strong in $X$ if the collection of all its finite dimensional $\cl_0$-closed subsets that are strong in $X$ is directed with respect to inclusions and has $Y$ as union.
% COMMENT Explain the "Equivalently". Perhaps better to say that if holds for one directed system then holds for all, that the union is Y.

For arbitrary subsets $Y$ of $X$, we say that $Y$ is strong in $X$ if $\cl_0(Y)$ is strong in $X$ in the above sense.
\end{Def}

The following lemma gathers the main properties of strong sets. The proofs of these facts are standard arguments using the submodularity of $\delta$ with respect to $\cl_0$. %(or Lemma~\ref{delta-submol-ineq})

\begin{Lemm} \label{ss-props}
\begin{enumerate}
\item \label{ss-trans} (Transitivity) Let $X,Y,Z$ be $\cl_0$-closed subsets of $A$. If $Z \str Y$ and $Y \str X$ then $Z \str X$.
% COMMENT Could improve the item below by replacing chains with directed systems!
\item \label{ss-unions} (Unions of chains) Let $(X_i)_{i\in I}$, be an increasing $\str$-chain of subsets of $A$, i.e. for all $i,j \in I$, if $i \leq j$ then $X_i \str X_j$. Put $X = \bigcup_{i \in I} X_i$. Then for all $i \in I$, $X_i \str X$.
\item \label{ss-inter} (Intersections) If $Y_1$ and $Y_2$ are finite dimensional strong $\cl_0$-closed subsets of $A$, then so is $Y_1 \cap Y_2$.
\end{enumerate}
\end{Lemm}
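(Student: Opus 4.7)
I would prove all three parts using the addition formula and the submodularity of $\delta$ with respect to $\cl_0$, together with the directed-system definition of strongness for infinite-dimensional sets. The hard part will be transitivity, since one cannot simply pass from $\delta(xY_0/Z_0) \geq 0$ to $\delta(x/Z_0) \geq 0$ (adjoining extra elements to the base can decrease predimension), so I need a decomposition via the addition formula that absorbs the negative contribution.

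For part (1), I would first reduce to showing $Z_0 \str X$ for each finite-dimensional $\cl_0$-closed $Z_0 \subseteq Z$ with $Z_0 \str Y$ coming from the directed system for $Z \str Y$. Given $x \subsetfin X$, directedness of the system exhibiting $Y \str X$ lets me pick a finite-dimensional $\cl_0$-closed $Y_0$ with $Z_0 \subseteq Y_0 \subseteq Y$ and $Y_0 \str X$. Two applications of the addition formula give
\[
\delta(x/Z_0) = \delta(xY_0/Y_0) + \delta(Y_0/Z_0) - \delta(xY_0/xZ_0),
\]
whose first two summands are nonnegative by $Y_0 \str X$ and $Z_0 \str Y$. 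For the last term, submodularity together with $xY_0 = Y_0 + xZ_0$ yields $\delta(xY_0/xZ_0) \leq \delta(Y_0/Y_0 \cap xZ_0)$; and since $Z_0 \subseteq Y_0 \cap xZ_0 \subseteq Y$, the addition formula plus $Z_0 \str Y$ gives $\delta(Y_0/Y_0 \cap xZ_0) \leq \delta(Y_0/Z_0)$. Substituting back yields $\delta(x/Z_0) \geq \delta(xY_0/Y_0) \geq 0$.

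I would deduce part (2) from part (1). Fix $i \in I$. For any finite-dimensional $\cl_0$-closed $Y \subseteq X_i$ that is strong in some $X_j$ with $j \geq i$, I claim $Y \str X$: given $x \subsetfin X$, choose $k \geq j$ with $\cl_0(x) \subseteq X_k$ by directedness of the chain; then $Y \str X_j$ and $X_j \str X_k$ together with (1) give $Y \str X_k$, so $\delta(x/Y) \geq 0$. The collection of such $Y$ is directed with union $X_i$ (inherited from the hypotheses $X_i \str X_j$ and closure under transitivity), so $X_i$ is a directed union of finite-dimensional strong-in-$X$ subsets, i.e.\ $X_i \str X$.

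For part (3), set $Y_0 = Y_1 \cap Y_2$ and take an arbitrary finite-dimensional $\cl_0$-closed $x$; I may assume $Y_0 \subseteq x$, since replacing $x$ by $x + Y_0$ does not change $\delta(x/Y_0)$. Two applications of submodularity, using $Y_2 \str A$ and $Y_1 \str A$ respectively, give
\[
\delta(x/x \cap Y_2) \geq \delta(x + Y_2/Y_2) \geq 0
\]
and, using the identity $(x \cap Y_2) \cap Y_1 = x \cap Y_0 = Y_0$ which holds because $Y_0 \subseteq x$,
\[
\delta(x \cap Y_2/Y_0) \geq \delta((x \cap Y_2) + Y_1/Y_1) \geq 0.
\]
The addition formula $\delta(x/Y_0) = \delta(x/x \cap Y_2) + \delta(x \cap Y_2/Y_0)$ then yields $\delta(x/Y_0) \geq 0$, as required.
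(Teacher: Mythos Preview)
Your proof is correct. The paper does not actually give a proof of this lemma; it simply states that ``the proofs of these facts are standard arguments using the submodularity of $\delta$ with respect to $\cl_0$.'' Your argument is precisely such a standard argument, carried out carefully: the addition-formula/submodularity decomposition you use for transitivity is the expected one, the reduction of unions of chains to transitivity is standard, and your treatment of intersections via two applications of submodularity together with the addition formula along the chain $Y_0 \subseteq x \cap Y_2 \subseteq x$ is clean and correct. There is nothing to compare against beyond confirming that your proof instantiates exactly the kind of argument the paper gestures at.
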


\begin{Def}
Let $X \subset A$. The \emph{strong closure of $X$}, denoted $\scl(X)$, is the smallest $\cl_0$-closed strong subset of $A$ containing $X$.
\end{Def}

\begin{Lemm}
Let $\delta$ be a submodular predimension function on $A$ with respect to the modular pregeometry $\cl_0$. Assume the values of $\delta$ are bounded from below in $\Z$.
Then for every set $X$, the strong closure of $X$ exists.
\end{Lemm}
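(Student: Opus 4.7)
The plan is to handle $X$ of finite $\cl_0$-dimension first, then extend to arbitrary $X$ via a directed union. In the finite-dimensional case I would use the integer-valued, bounded-below hypothesis on $\delta$ to minimise $\delta$ (and, as a tiebreaker, $\ld$) over the finite-dimensional $\cl_0$-closed supersets of $X$; for general $X$, the strong closure will be the directed union of the strong closures of its finite subsets.

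Suppose first that $X$ is $\cl_0$-closed and of finite $\cl_0$-dimension. Let $S_X$ denote the collection of $\cl_0$-closed finite-dimensional $Y \supseteq X$. Because $\delta$ is $\Z$-valued and bounded below, the values $\{\delta(Y) : Y \in S_X\}$ attain a minimum $m$. Among the $Y \in S_X$ with $\delta(Y) = m$, pick one $Y^*$ with $\ld(Y^*)$ minimal. This $Y^*$ is strong in $A$: for any $z \subsetfin A$, the set $\cl_0(zY^*)$ lies in $S_X$, so $\delta(\cl_0(zY^*)) \geq m = \delta(Y^*)$, whence $\delta(z/Y^*) \geq 0$.

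The key step is to show $Y^* \subseteq W$ for every strong $\cl_0$-closed $W \supseteq X$. Write $W = \bigcup_i W_i$ as a directed union of finite-dimensional strong $\cl_0$-closed sets (this union is trivial if $W$ itself is finite-dimensional); since $X$ is finite-dimensional, some $W_i$ contains $X$. Applying submodularity of $\delta$ to $W_i$ and $Y^*$ gives
\[
\delta(W_i + Y^*) - \delta(Y^*) \;\leq\; \delta(W_i) - \delta(W_i \cap Y^*),
\]
and since $W_i$ is strong in $A$ we have $\delta(W_i + Y^*) \geq \delta(W_i)$, so rearranging yields $\delta(W_i \cap Y^*) \leq \delta(Y^*) = m$. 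Now $W_i \cap Y^* \in S_X$ (intersections of $\cl_0$-closed sets are $\cl_0$-closed, and both factors contain $X$), so the minimality of $m$ forces $\delta(W_i \cap Y^*) = m$, and then the tiebreaker minimality of $\ld(Y^*)$ together with $W_i \cap Y^* \subseteq Y^*$ forces $W_i \cap Y^* = Y^*$, i.e.\ $Y^* \subseteq W_i \subseteq W$. Thus $\scl(X) = Y^*$ exists in the finite-dimensional case.

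For arbitrary $X$, monotonicity of the finite-dimensional $\scl$ (which is immediate from the minimality just proved) implies that $\{\scl(X') : X' \subsetfin X\}$ is directed under inclusion; set $\scl(X)$ to be its union. Because $\cl_0$ is finitary this union is $\cl_0$-closed, it is strong in $A$ by definition, and it contains $X$; conversely, any strong $\cl_0$-closed $W \supseteq X$ contains each $\scl(X')$ by the finite-dimensional case applied inside a $W_i$ in a decomposition of $W$ that captures $X'$, hence contains $\scl(X)$. The main obstacle is the submodularity computation giving $\delta(W_i \cap Y^*) \leq m$; everything else is a matter of carefully unwinding the definition of strongness.
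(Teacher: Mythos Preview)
Your argument is correct. The treatment of arbitrary $X$ is the same as the paper's. In the finite-dimensional case the paper takes a slightly different route: having found one strong finite-dimensional $\cl_0$-closed superset of $X$ by minimising $\delta$ (just as you do), it then invokes Lemma~\ref{ss-props}.\ref{ss-inter} (intersections of finite-dimensional strong sets are strong) to conclude that the family $\mathcal{S}_X$ of all such supersets is closed under finite intersections, observes that any intersection in $\mathcal{S}_X$ is actually finite since all members are finite-dimensional, and takes $\scl(X)$ to be the intersection of the whole family. You instead bypass the citation of the intersection lemma by adding the $\ld$-tiebreaker and checking directly, via the submodularity inequality, that your minimiser $Y^*$ lies inside every strong superset; this amounts to unfolding the proof of Lemma~\ref{ss-props}.\ref{ss-inter} in this particular instance. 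Your version is more self-contained, the paper's is tidier once the intersection lemma is available; the underlying idea is the same.
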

\begin{proof}
Assume $X$ has finite $\cl_0$-dimension. Since the values of $\delta$ are bounded from below in $\Z$, among all the finite dimensional $\cl_0$-closed sets $Y$ containing $X$ we can find one such that $\delta(Y)$ is minimal, and such $Y$ is then clearly strong. Thus, the collection $\mathcal{S}_X$ of all strong finite dimensional $\cl_0$-closed sets containing $X$ is non-empty. By Lemma~\ref{ss-props}.\ref{ss-inter}, $\mathcal{S}_X$ is closed under finite intersections. Also, since all the elements of $\mathcal{S}_X$ are finite dimensional $\cl_0$-closed sets, any intersection of elements of $\mathcal{S}_X$ is the intersection of finitely many of them. Thus, the intersection of all elements of $\mathcal{S}_X$ is in $\mathcal{S}_X$, and it is indeed the strong closure of $X$.

For arbitrary $X$, we have
\[
\scl(X) = \bigcup \{ \scl(X') : X' \subsetfin X \}.
\]
To see this note that the set on the right hand side is strong in $A$ because  it is the union of a directed system of finite-dimensional $\cl_0$-closed strong subsets of $A$; also, it is contained in any strong subset of $A$ that contains $X$, for it is clear from the definitions that such a set must contain each of the $\scl(X')$ for $X' \subsetfin X$.
\end{proof}

Let us finish this subsection introducing some further conventions.

\begin{Def}
Let $\X,\Y$ be structures in $\Sub\CC$. 

If $\X$ is a substructure of $\Y$ and $X$ is a strong subset of $\Y$, then we say that $\Y$ is a \emph{strong extension} of $\X$ and write $\X \str \Y$.

An embedding $f:\X \to \Y$ is said to be a \emph{strong embedding}, if $f(X)$ is a strong subset of $Y$. If $Z \subset X,Y$, we say that $f$ is an embedding \emph{over $Z$} if $f$ fixes $Z$ pointwise.
\end{Def}

\subsection{The class $\CC_0$}

Let us fix, for the rest of the paper, a structure $\X_0 = (X_0,G_0)$ in $\Sub \CC$ of finite $\cl_0$-dimension .

Let $\CC_0$ be the class of structures $\A$ in $\CC$ into which there is a strong embedding of $\X_0$, expanded by constants for the image of such an embedding. For $\A \in \CC_0$, we identify $\X_0$ with the strong substructure of $A$ consisting of the interpretations of the constants; with this convention in place, we shall not be explicit about the interpretation of the constants in our notation for the structures in $\CC_0$.

Let $\Sub\CC_0$ be the class of substructures of structures in $\CC_0$ whose domain is a $\cl_0$-closed set (necessarily containing $X_0$). Also, let $\Fin\CC$ be the class of structures in $\Sub\CC$ whose domain has finite $\cl_0$-dimension (over $X_0$).

It is clear from the definitions that $\X_0$ embeds strongly in every structure in $\Sub\CC_0$, therefore we say that it is \emph{prime} in $\Sub\CC_0$ with respect to strong embeddings. Similarly, setting $A_0:= \acl_{T_{\Alg}}(X_0)$ and $\A_0 = (A_0,G_0)$, the structure $\A_0$ is in $\CC_0$ and embeds strongly in all structures in $\CC_0$. Hence we say that $\A_0$ is \emph{prime} in $\CC_0$ with respect to strong embeddings. 

Also, note that for all $\A=(A,G) \in \CC_0$, we have $\Tor G = \Tor G_0$; indeed, recall that $A = \Alg(K)$ for some algebraically closed field $K \supset k_0$ and note that all the torsion points of $A$ have coordinates in $k_0\alg$, therefore we have $\Tor A = \Tor \Alg(k_0\alg) = \Tor A_0$, and hence also $\Tor G  = \Tor A \cap G = \Tor A_0 \cap G = \Tor G_0$.

%%%%%

It is easy to see that for structures in $\CC_0$ (or $\Sub\CC_0$), the localisation $\delta_{X_0}$ is a submodular predimension function with respect to the modular pregeometry $(\cl_0)_{X_0}$.

Henceforth, we shall always work over $X_0$ and, in order to ease the notation, we shall write simply $\delta$ for $\delta_{X_0}$ and $\cl_0$ for $(\cl_0)_{X_0}$. 

%%%%%%%%%%%%

The following is a simple but useful remark.

\begin{Rem} \label{ss-checkgreen}
Let $Y$ be a finite dimensional $\cl_0$-closed subset of a structure $\A \in \CC_0$. Recall that, by definition, $Y$ is strong in $\A$ if for every finite dimensional $\cl_0$-closed subset $X$ of $A$, $\delta(X/Y) \geq 0$. We now note that $Y$ is strong in $\A$ if and only if for every finite dimensional $\cl_0$-closed set $X$ \emph{contained in $G$}, $\delta(X/Y) \geq 0$. This follows immediately from the inequality: $\delta(X/Y) \geq \delta((X+Y)\cap G/Y)$. Furthermore, notice that $Y$ is strong in $\A$ if and only if for every \emph{$\End(\Alg)$-linearly independent} tuple $x \subset G$, $\delta(x/Y) \geq 0$.
\end{Rem}

%%%%%
% COMMENT
% \textbf{A little fact that may be needed}\\ 
% Extension of definition of $\delta_G$ over large sets. Characterisation of strong for large sets in these terms. ONLY SEEMS TO WORK INSIDE $G$. 
% Submodularity for large sets? This may be needed in the analytic part. DEFINITELY FOR POWERS AND EXP.
%%%%%

\subsection{Rich structures} \label{sec:existence}

%%%%%%%%%%%%%%%%%%%%%%%%%%%%%%%%%%%%%%%%%%%%%%%%%%%%%%%%%%%%%%%%%%%%%%%%%%%%%%%%%%%%%%%%%%%%%%%%%%%%%%%%

\begin{Def}
A structure $\A \in \CC_0$ is said to be \emph{rich} if for every $\X,\Y \in \Fin\CC_0$ with $\X \str \A$ and $\X \str \Y$, there exists a strong embedding of $\Y$ into $\A$ over $\X$.
\end{Def}
%(having AAP, this is stronger than the version that doesn't require strongness inside $A$ of the base and of the embedding)

Let us recall that a collection $\F$ of partial isomorphisms from a structure $\A_1$ to a structure $\A_2$ is said to be a \emph{back-and-forth system} if $\F$ is non-empty and has the following properties:
\begin{itemize}
\item \emph{(Forth)} For all $f \in \F$, for all $a_1 \in A_1$, there exists $g \in \F$ extending $f$ such that $a_1$ is in the domain of $g$. 
\item \emph{(Back)} For all $f \in \F$, for all $a_2 \in A_1$, there exists $g \in \F$ extending $f$ such that $a_2$ is in the image of $g$. 
\end{itemize}
If there exists a back-and-forth system of partial isomorphisms from $\A_1$ to $\A_2$, then the structures $\A_1$ and $\A_2$ are said to be \emph{back-and-forth equivalent}. It is a theorem of Karp that two structures are back-and-forth equivalent if and only if they satisfy the same $L_{\infty \omega}$-sentences (recall that $L_{\infty \omega}$ is the extension of first-order logic where conjunctions of arbitrary sets of formulas in a given finite set of variables are allowed as formulas). From this, it follows that every element of a back-and-forth system is a partial elementary map and that back-and-forth equivalent structures are elementarily equivalent.

\begin{Rem} \label{RemF}
Notice that a structure $\A \in \CC_0$ is is rich if and only if for every $\X,\Y \in \Fin\CC_0$ and every strong embedding $f$ of $\X$ into $\Y$, there exits a strong embedding of $\Y$ into $\A$ extending $f^{-1}$. Therefore  if $\A_1$ and $\A_2$ are rich structures in $\CC_0$, then the set of partial isomorphisms 
\[
\F(\A_1,\A_2) = \{ f:X_1 \xrightarrow{\cong} X_2 : \text{ $X_i \str A_i$, $X_i$ fin. dim. $\cl_0$-closed}, i=1,2\}.
\]
is a back-and-forth system. Thus, all rich structures in $\CC_0$ are $L_{\infty \omega}$-equivalent and, in particular, elementarily equivalent.
\end{Rem}

We further note the following:
% COMMENT Add comment about this being QE!

\begin{Lemm} \label{bnf}
Let $\A_1$ and $\A_2$ be rich structures in $\CC_0$. Let $f: X_1\xrightarrow{\cong} X_2$ be a partial isomorphism from $\A_1$ to $\A_2$ such that $X_i$ is a strong $\cl_0$-closed subset of $A_i$, for $i=1,2$, respectively. Then $f$ is an elementary map. 
\end{Lemm}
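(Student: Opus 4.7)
The plan is to reduce to the finite-dimensional case that has already been handled by Remark~\ref{RemF}. To show $f$ is elementary, it suffices to show that for every finite tuple $a \subset X_1$ we have $\tp^{\A_1}(a) = \tp^{\A_2}(f(a))$. First I would locate a finite-dimensional $\cl_0$-closed set $Y_1$ with $a \subset Y_1 \subset X_1$ and $Y_1 \str \A_1$. This is possible because, by definition, $X_1 \str \A_1$ exhibits $X_1$ as the directed union of the finite-dimensional $\cl_0$-closed subsets of $X_1$ that are strong in $\A_1$, so I may take $Y_1$ from this directed system large enough to contain $a$.

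Next I would set $Y_2 := f(Y_1)$ and argue that $Y_2 \str \A_2$. The key observation is that since $f$ is an $L$-isomorphism between the substructures $X_1$ and $X_2$, and $L$ contains the predicate $G$ together with predicates for all $k_0$-closed subvarieties of the $\Alg^n$'s, $f$ preserves $\trd$ (algebraic dependence being $L_\Alg$-definable, cf.\ Fact~\ref{fact-biin}) and preserves the $\End(\Alg)$-linear span (each $r \in \End(\Alg)$ being $L_\Alg$-definable by a Zariski-closed graph), hence it preserves $\cl_0$ and the predimension $\delta$. Concretely, for every finite $z \subset X_2$, setting $z' = f^{-1}(z) \subset X_1$, the inequality $\delta(z'/Y_1) \geq 0$ (which holds because $Y_1 \str \A_1$, a fortiori $Y_1 \str X_1$) transports across $f$ to $\delta(z/Y_2) \geq 0$. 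This gives $Y_2 \str X_2$, and transitivity of $\str$ (Lemma~\ref{ss-props}.\ref{ss-trans}) combined with the hypothesis $X_2 \str \A_2$ then yields $Y_2 \str \A_2$.

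At this point $f|_{Y_1}: Y_1 \xrightarrow{\cong} Y_2$ is a partial isomorphism whose domain and image are finite-dimensional $\cl_0$-closed strong subsets of $\A_1$ and $\A_2$ respectively, hence an element of the back-and-forth system $\F(\A_1,\A_2)$ from Remark~\ref{RemF}. By Karp's theorem the elements of this system are elementary, so $\tp^{\A_1}(a) = \tp^{\A_2}(f(a))$. Since $a \subset X_1$ was arbitrary, $f$ is elementary, as claimed.

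The only mildly delicate point is the verification that an $L$-isomorphism between the substructures $X_1$ and $X_2$ genuinely preserves $\delta$ and $\cl_0$, and hence maps sets strong in $X_1$ to sets strong in $X_2$; once this is in hand the result is purely a bookkeeping exercise in transitivity of $\str$ together with Remark~\ref{RemF}. I expect this to be the main (and only nontrivial) obstacle.
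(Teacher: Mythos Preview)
Your proof is correct and follows essentially the same route as the paper: reduce to the finite-dimensional case by picking, for each finite tuple, a finite-dimensional strong $\cl_0$-closed set from the directed system witnessing $X_1 \str \A_1$, then argue that its image is strong in $\A_2$ via the chain $Y_1 \str X_1 \Rightarrow f(Y_1) \str X_2 \Rightarrow f(Y_1) \str \A_2$ (the last step by transitivity), and finally invoke the back-and-forth system $\F(\A_1,\A_2)$. Your explicit justification that an $L$-isomorphism preserves $\cl_0$ and $\delta$ is a detail the paper leaves implicit, but the overall argument is the same.
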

\begin{proof}
If $X_1$ and $X_2$ are finite dimensional, then $f$ is in the back-and-forth system $\F(\A_1,\A_2)$ and is hence an elementary map. 

In general, $X_1$ is the union of a directed system of finite dimensional sets $(X_1^i)$, strong in $A$. Note that $X_2$ is the union of the directed system of sets $f(X_1^i)$. Since each $X_1^i$ is strong in $\A_1$, $X_1^i$ is, in particular, strong in $X_1$. Since $f$ is a partial isomorphism, each $f(X_1^i)$ is therefore strong in $X_2$. Therefore, by transitivity, each $f(X_1^i)$ is strong in $\A_2$. Thus, every restriction of $f$ to an $X_1^i$ is a partial isomorphism between a strong subset of $\A_1$ and a strong subset of $\A_2$ and hence is an elementary map. Since $f$ is the union of the directed system of its restrictions to the $X_1^i$, it follows that $f$ is an elementary map.
\end{proof}

%%%%%%%%%%%%%%%%

The following lemma gives sufficient conditions for the existence of rich structures in the class $\CC_0$.

\begin{Lemm} \label{rich-exist}
Assume the following:
\begin{itemize}
\item \emph{($\Sub\CC_0$ has the amalgamation property for strong embeddings)} For all $\Y_0, \Y_1, Y_2 \in \Sub\CC_0$ with $\Y_0 \str \Y_1$ and $\Y_0 \str \Y_2$, there exist $\Y \in \Sub\CC_0$ and strong embeddings $j_1: \Y_1 \to \Y$ and $j_2:\Y_2 \to \Y$ with $j_1|_{\Y_0} = j_2|_{\Y_0}$.
\item \emph{($\CC_0$ is closed under unions of strong increasing chains)} If $(\A_i)_{i \in I}$ is a strong increasing chain of structures in $\CC_0$, then the structure $\A = \bigcup_{i \in I} \A_i$ is in $\CC_0$.
\item \emph{(Extension property)} for all $\Y \in \Sub\CC_0$ there exists $\A \in \CC_0$ with $\Y \str \A$.
\end{itemize}
Then for every $\Y \in \Sub \CC_0$ there exists a rich structure $\A \in \CC_0$ with $\Y \str \A$.
\end{Lemm}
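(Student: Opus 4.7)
The plan is a standard Fra\"iss\'e-type amalgamation, iterated with bookkeeping. First, apply the extension property to $\Y$ to get $\A_0 \in \CC_0$ with $\Y \str \A_0$. We then build a strong chain $(\A_\alpha)_{\alpha \leq \kappa}$ in $\CC_0$ whose union $\A := \A_\kappa$ will be rich, where $\kappa$ is a regular cardinal larger than the number of isomorphism classes over $X_0$ of pairs $(\X, \Y')$ with $\X, \Y' \in \Fin\CC_0$ and $\X \str \Y'$. Fix a bookkeeping surjection $\pi:\kappa \to \kappa$ with $\pi(\alpha) \leq \alpha$ and such that every value is attained cofinally often (this is a standard device obtained from a bijection $\kappa \cong \kappa \times \kappa$).

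At a successor stage $\alpha+1$, the bookkeeping selects a task $(\X, \Y')$ previously scheduled against $\A_{\pi(\alpha)}$: a pair with $\X \str \A_{\pi(\alpha)}$ finite-dimensional, $\Y' \in \Fin\CC_0$, and $\X \str \Y'$. By transitivity (Lemma~\ref{ss-props}.\ref{ss-trans}) applied to $\A_{\pi(\alpha)} \str \A_\alpha$, we have $\X \str \A_\alpha$. Apply the amalgamation property of $\Sub\CC_0$ to the strong embeddings $\X \hookrightarrow \Y'$ and $\X \hookrightarrow \A_\alpha$, obtaining $\W \in \Sub\CC_0$ together with strong embeddings $j_1:\Y' \to \W$ and $j_2:\A_\alpha \to \W$ agreeing on $\X$. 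Identifying $\A_\alpha$ with $j_2(\A_\alpha)$, the extension property provides $\A_{\alpha+1} \in \CC_0$ with $\W \str \A_{\alpha+1}$; by transitivity $\A_\alpha \str \A_{\alpha+1}$, and $\Y'$ is strongly embedded into $\A_{\alpha+1}$ over $\X$ via $j_1$.

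At a limit stage $\lambda$, set $\A_\lambda := \bigcup_{\alpha<\lambda} \A_\alpha$; this is in $\CC_0$ by the hypothesis that $\CC_0$ is closed under strong unions, and $\A_\alpha \str \A_\lambda$ for all $\alpha < \lambda$ by Lemma~\ref{ss-props}.\ref{ss-unions}. In particular, $\A = \A_\kappa \in \CC_0$ and $\Y \str \A$. To verify richness, take $\X, \Y' \in \Fin\CC_0$ with $\X \str \A$ and $\X \str \Y'$. Since $\X$ is finite-dimensional, $\X \subseteq \A_\alpha$ for some $\alpha < \kappa$, and $\X \str \A_\alpha$ holds because any witnessing $x \subsetfin \A_\alpha$ satisfies $\delta(x/X) \geq 0$ by $\X \str \A$. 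By the cofinal hitting property of $\pi$, the isomorphism type of $(\X, \Y')$ over $\X$ is scheduled and treated at some stage $\beta+1$ with $\beta \geq \alpha$, yielding a strong embedding $\Y' \hookrightarrow \A_{\beta+1}$ over $\X$; composing with $\A_{\beta+1} \str \A$ gives a strong embedding $\Y' \hookrightarrow \A$ over $\X$, as required.

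The only genuine obstacle is the bookkeeping, since new finite-dimensional strong subsets arise at successor stages and one therefore cannot fix the task list at the outset. The pairing trick above handles this uniformly, and the rest of the argument is a direct application of the three stated hypotheses together with the transitivity and chain properties of $\str$ already recorded in Lemma~\ref{ss-props}.
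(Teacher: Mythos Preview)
Your proof is correct and follows the same Fra\"iss\'e-style strategy as the paper. The paper organizes the construction slightly differently: rather than a single transfinite chain of length $\kappa$ with bookkeeping, it first proves an intermediate claim --- for every $\Y \in \Sub\CC_0$ there is $\A \in \CC_0$ with $\Y \str \A$ into which every finite strong extension of every $\X \in \Fin\CC_0$ with $\X \str \Y$ strongly embeds over $\X$ --- and then iterates this claim $\omega$ times to obtain the rich structure. This two-layer arrangement sidesteps the cardinal bookkeeping you flag in your final paragraph (in particular, no care about the size of $\kappa$ relative to $|\A_0|$ is needed), at the cost of one extra level of iteration; both organizations are standard and interchangeable.

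One small imprecision in your version: you choose $\kappa$ larger than the number of \emph{isomorphism classes} of pairs $(\X,\Y')$, but the tasks you actually schedule are pairs with $\X$ a \emph{specific} substructure of some $\A_{\pi(\alpha)}$, so you also need $\kappa$ to dominate $|\A_\alpha|$ for all $\alpha<\kappa$. This is easily arranged (take $\kappa$ regular and $>|\A_0|$ as well, then check inductively that amalgamating with a finite-dimensional $\Y'$ and applying the extension property keeps $|\A_{\alpha+1}|<\kappa$), and does not affect the substance of the argument.
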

\begin{proof}
It is sufficient to show the following: 

\noindent\textbf{Claim:} for all $\Y \in \Sub \CC_0$, there exists $\A \in \CC_0$ with $\Y \str \A$ such that for all $\X,\X' \in \Fin\CC_0$ with $\X \str \Y$ and $\X \str \X'$, there is a strong embedding of $\X'$ into $\A$ over $\X$. 

Indeed, the above claim implies the lemma: given any $\Y \in \Sub \CC_0$, we can construct a strong increasing chain of structures $(\A_i)_{i<\omega}$ in $\CC_0$ starting with any $\A_0$ in $\CC_0$ with $\Y \str \A_0$ (such exist by the extension property), and inductively taking $\A_{i+1}$ to be as provided by the claim for $\A_i$. Since $\CC_0$ is closed under unions of strong increasing chains, the structure $\A = \bigcup_{i} \A_i$ is in $\CC_0$. It is easy to see that $\A$ is rich and $\Y \str \A$.

\noindent\textbf{Proof of the claim:} Let $\Y \in \Sub\CC_0$. Let $((\X_i,\X'_i))_{i<\lambda}$ be an enumeration of all pairs $(X,X')$ with $\X,\X' \in\Fin\CC_0$ , $\X \str \Y$ and $\X \str \X'$. We define a strong increasing chain $(\A_i)_{i <\lambda}$ of structures in $\CC_0$ as follows: Let $\A_0$ be any structure in $\CC_0$ with $\Y \str \A_0$ (extension property). For each limit ordinal $0< \sigma <\lambda$, let $\A_\sigma := \bigcup_{i <\sigma} \A_i$. For each $i<\lambda$, let $\ZZ \in \Sub\CC_0$ be as provided by the amalgamation property of $\Sub\CC_0$ for the extensions $\X_i \str \A_i$, $\X_i \str \X'_i$; after an identification, we may assume $\A_i \str \ZZ$; let $\A_{i+1}$ be a structure in $\CC_0$ with $\ZZ \str \A_{i+1}$ (extension property). The structure $\A := \bigcup_{i<\lambda} \A_i$ is then as required.
\end{proof}
% Include the above proof?

%%%%%%%%%%%%%%%%%%%%%%%%%%%%%%%%%%%%%%%%%%%%%%%%%%%%%%%%%%%%%%%%%%%%%%%%%%%%%%%%%%%%%%%%%%%%%%%%%%%%%%%%

We now prooced to show that the sufficient conditions found in \ref{rich-exist} are satisfied. 

\subsubsection{Amalgamation property}

\begin{Def}[Free Amalgam]
Let $\X_i = (X_i,G_i)$, for $i = 1,2,3$, be structures in $\Sub\CC$ and assume $\X_1 \subset \X_i$ for $i=2,3$. The \emph{free amalgam} $\X = (X,G)$ of $\X_2$ and $\X_3$ over $\X_1$ is defined as follows:
Replace $\X_3$ by an isomorphic copy over $\X_1$ so that $X_2$ and $X_3$ are $\acl_{T_{\Alg}}$-independent over $X_1$ inside some model $\bar A$ of $T_{\Alg}$.
Then let $X := X_2 + X_3$ (in $\bar A$) with the induced structure from $\bar A$ and let $G = G_2 + G_3$. 
\end{Def}

\begin{Rem}
Note that the above $\X$ is in $\Sub\CC$. Indeed, $A := \acl_{T_{\Alg}}(X)$ is an infinite algebraically closed set in a model of the strongly minimal theory $T_{\Alg}$ and hence a model of $T_{\Alg}$. Also, it is easy to see that $G$ is a divisible subgroup of $X$.

See that $\X$ comes with canonical embeddings $\X_i \to \X$ over $\X_1$, $i=2,3$. Moreover, $\X$ is unique up to isomorphism over these embeddings. 
\end{Rem}

\begin{Lemm}[Asymmetric Amalgamation Lemma] \label{aap-cc0}
Let $\X_i = (X_i,G_i)$, for $i = 1,2,3$, be structures in $\Sub\CC_0$ and assume $\X_1 \subset \X_i$ for $i=2,3$. If $\X_1$ is strong in $\X_2$, then the free amalgam $\X$ of $\X_2$ and $\X_3$ over $\X_1$ is in $\Sub\CC_0$ and, under the canonical embedding, $\X_3$ is strong in $\X$.
\end{Lemm}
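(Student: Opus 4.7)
The plan is to construct $\X$ via a free amalgamation in a large algebraically closed field and then verify the strong extension $\X_3 \str \X$ using the asymmetric hypothesis $\X_1 \str \X_2$. First I would fix an algebraically closed $\bar K \supset k_0$ and inside $\bar A := \Alg(\bar K)$ take an isomorphic copy of $\X_3$ over $\X_1$ such that $X_2$ and $X_3$ are $\acl_{T_\Alg}$-independent over $X_1$, then set $X := X_2 + X_3$ and $G := G_2 + G_3$. Standard arguments show that $X$ is $\cl_0$-closed (being a sum of $\End(\Alg)$-submodules), that $G$ is a divisible $\End(\Alg)$-submodule of $X$, and that $\X := (X,G)$ embeds as a $\cl_0$-closed substructure of $(\bar A, G) \in \CC$; the membership $\X \in \Sub\CC_0$ will be verified at the end.

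For $\X_3 \str \X$: since $X_0 \str \X_3$ (inherited from $\X_3 \in \Sub\CC_0$), the values of $\delta$ are bounded below on finite-dimensional subsets of $X_3$, so $X_3$ is the directed union of finite-dimensional $\cl_0$-closed $Y \str \X_3$, and it is enough to show each such $Y$ is strong in $\X$. By Remark~\ref{ss-checkgreen}, this reduces to verifying $\delta(z/Y) \geq 0$ for every $\End(\Alg)$-linearly independent tuple $z \subset G$. Writing each $z_i = a_i + b_i$ with $a_i \in G_2$ and $b_i \in G_3$, and using that this decomposition is unique only up to shifts by elements of $G_2 \cap G_3$, I would arrange (after a permutation) $z = z^{(2)} \cup z^{(3)}$ with $z^{(3)} \subset G_3$ and with the $a$-parts $a^{(2)}$ of $z^{(2)}$ $\End(\Alg)$-linearly independent modulo $G_1$. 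Setting $Y' := \cl_0(Y \cup z^{(3)}) \subset X_3$ and $n := |z^{(2)}|$, the addition formula gives $\delta(z/Y) = \delta(z^{(3)}/Y) + \delta(z^{(2)}/Y')$, the first summand being nonnegative by $Y \str \X_3$ and $z^{(3)} \subset G_3 \subset X_3$.

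The crux is showing $\delta(z^{(2)}/Y') \geq 0$. On the green side, since $z^{(2)} \subset G$ and $Y' \subset X_3$, any element of $\cl_0(z^{(2)} \cup Y')$ lying in $G$ must be an $\End(\Alg)$-combination of $z^{(2)}$ modulo $Y' \cap G$, so $\ld(\cl_0(z^{(2)} \cup Y') \cap G / Y' \cap G) \leq n$. On the transcendence side, by $\acl_{T_\Alg}$-independence of $X_2$ and $X_3$ over $X_1$,
\[
\trd(z^{(2)}/Y') = \trd(a^{(2)}/Y', b^{(2)}) \geq \trd(a^{(2)}/X_3) = \trd(a^{(2)}/X_1),
\]
and the hypothesis $\X_1 \str \X_2$ applied to $a^{(2)} \subset G_2$, which is $\End(\Alg)$-linearly independent modulo $G_1 = X_1 \cap G_2$, yields $2\trd(a^{(2)}/X_1) \geq \ld(\cl_0(a^{(2)} \cup X_1) \cap G_2 / G_1) \geq n$. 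Combining, $\delta(z^{(2)}/Y') \geq 2(n/2) - n = 0$, establishing $\X_3 \str \X$. Finally, $\X \in \Sub\CC_0$ follows from transitivity $X_0 \str \X_3 \str \X$ together with the observation that elements of $\bar A \setminus X$ are white and thus contribute only the nonnegative $2\trd$-term to the predimension, so that $X_0 \str (\bar A, G) \in \CC_0$.

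The hardest step, I expect, is the decomposition $z = z^{(2)} \cup z^{(3)}$: exploiting the ambiguity in $z_i = a_i + b_i$ to isolate a ``genuinely mixed'' part whose $X_2$-component is linearly independent modulo $G_1$ is precisely what allows the asymmetric hypothesis $\X_1 \str \X_2$ to deliver the sharp transcendence bound $2\trd(a^{(2)}/X_1) \geq n$ needed to offset the green dimension gain of $z^{(2)}$ in the amalgam.
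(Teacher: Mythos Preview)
Your argument is correct in outline, but it is considerably more laborious than the paper's, and there are two small imprecisions worth flagging. First, obtaining $z = z^{(2)} \cup z^{(3)}$ with $z^{(3)} \subset G_3$ is not achieved by a mere permutation: if $a_i$ lies in $\spanEndA(a^{(2)}) + G_1$, then it is $z_i$ minus an $\End(\Alg)$-combination of the $z^{(2)}_j$ that lands in $G_3$, so you really need an invertible linear change of the tuple $z$ (harmless, since it preserves $\cl_0(z)$ and hence $\delta$). Second, $\trd(z^{(2)}/Y') = \trd(a^{(2)}/Y', b^{(2)})$ should be $\geq$, since $b^{(2)}$ need not lie in $Y'$; the inequality is the direction you need, so nothing breaks.

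The paper's proof bypasses all of this by working at the level of $\cl_0$-closed sets rather than individual green tuples. Given a finite-dimensional $\cl_0$-closed $Y \subset G$, one simply writes $Y = Y_2 + Y_3$ with $Y_i$ a finite-dimensional $\cl_0$-closed subset of $G_i$ (possible because $G = G_2 + G_3$ by construction of the free amalgam), notes $\delta(Y/X_3) = \delta(Y_2/X_3)$ since $Y_3 \subset X_3$, and then uses the $\acl_{T_\Alg}$-independence of $X_2$ and $X_3$ over $X_1$ to conclude $\delta(Y_2/X_3) = \delta(Y_2/X_1) \geq 0$, the last inequality by $\X_1 \str \X_2$. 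The ``hard decomposition step'' you single out is thus unnecessary: the set-level splitting $Y = Y_2 + Y_3$ is automatic, and the $\trd$- and $\ld_G$-contributions stay packaged together inside $\delta$. What your approach buys is explicitness about why the green-linear term behaves well across the amalgam (the verification that any green element of $\cl_0(z^{(2)} \cup Y')$ has its $Y'$-component already in $G_3$), which the paper's one-line appeal to independence leaves implicit.
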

\begin{proof}
Let us identify $\X_3$ with its image under the canonical embedding into $\X$. Since we know $\X \in \Sub\CC$, by transitivity it suffices to show that $\X_3$ is strong in $\X$. Let $Y$ be a finite dimensional $\cl_0$-closed subset of $G$ and let us show that $\delta(Y/X_3) \geq 0$ (this is enough, by \ref{ss-checkgreen}). Since $G = G_2 + G_3$, there exists finite dimensional $\cl_0$-closed sets $Y_2$ and $Y_3$ of $X_2$ and $X_3$, respectively, such that $Y = Y_2 + Y_3$. Note that $\delta(Y/X_3) = \delta(Y_2/X_3)$. Also, by the independence of $X_2$ and $X_3$ over $X_1$, we know that $\delta(Y_2/X_3) = \delta(Y_2/X_1)$. Thus, $\delta(Y/X_3) = \delta(Y_2/X_1)$, and the latter is non-negative because $\X_1$ is strong in $\X_2$.
\end{proof}

\begin{Cor} \label{ap-cc0}
$\Sub\CC_0$ has the amalgamation property with respect to strong embeddings.
\end{Cor}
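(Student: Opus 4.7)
The plan is to derive the corollary directly from the Asymmetric Amalgamation Lemma \ref{aap-cc0}, exploiting the symmetry of the free amalgam construction. Given $\X_1, \X_2, \X_3 \in \Sub\CC_0$ with strong embeddings $\X_1 \str \X_2$ and $\X_1 \str \X_3$, I would first reduce to the case where $\X_1$ is literally a common substructure of $\X_2$ and $\X_3$ (by replacing with isomorphic copies), and then form the free amalgam $\X$ of $\X_2$ and $\X_3$ over $\X_1$, together with the canonical embeddings $j_i : \X_i \to \X$ for $i=2,3$ agreeing on $\X_1$.

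Now I apply Lemma \ref{aap-cc0} twice. First, since $\X_1 \str \X_2$, the lemma asserts $\X \in \Sub\CC_0$ and that $j_3$ is a strong embedding. Second, since the free amalgam construction is symmetric in $\X_2$ and $\X_3$ (up to canonical isomorphism), I can reapply the lemma with the roles of $\X_2$ and $\X_3$ swapped: using $\X_1 \str \X_3$, the same structure $\X$ witnesses that $j_2$ is also a strong embedding. This gives the required amalgamating structure with both embeddings strong and agreeing on $\X_1$, which is exactly the amalgamation property.

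There is essentially no obstacle here; the only subtlety is verifying that the free amalgam is symmetric so that the second application of Lemma \ref{aap-cc0} is legitimate, but this is clear from the definition, which is manifestly symmetric in $\X_2$ and $\X_3$ (one picks $\acl_{T_\Alg}$-independent copies over $\X_1$ and takes the sum with induced structure, together with $G_2 + G_3$). Hence the proof amounts to invoking Lemma \ref{aap-cc0} once in each orientation.
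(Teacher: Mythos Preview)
Your proposal is correct and is exactly the intended argument: the paper states the corollary without proof immediately after Lemma~\ref{aap-cc0}, and the only way to read it is as you do --- form the free amalgam and apply the asymmetric lemma once in each direction, using the evident symmetry of the construction, to see that both canonical embeddings are strong.
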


\begin{Rem}
Since the free amalgam of finite dimensional structures in $\Sub\CC_0$ is finite dimensional, we also have that the amalgamation property with respect to strong embeddings holds for the class $\Fin\CC_0$ of finite dimensional structures in $\Sub\CC_0$.
\end{Rem}

%%%%%%%%%%%%%%%%%%%%%%%%%%%

\subsubsection{Unions of chains}

\begin{Lemm} \label{uc-cc0}
The class $\CC_0$ is closed under unions of strong increasing chains.
\end{Lemm}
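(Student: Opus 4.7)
The plan is to verify the three requirements for $\A := \bigcup_{i \in I} \A_i$ to lie in $\CC_0$: namely that $A$ is a model of $T_{\Alg}$, that $G := \bigcup_i G_i$ is a divisible $\End(\Alg)$-submodule of $A$, and that $\X_0$ embeds strongly in $\A$ via the common embedding induced by the canonical embeddings $\X_0 \hookrightarrow \A_i$.

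For the first requirement I would invoke Fact~\ref{fact-biin} to write each $\A_i = \Alg(K_i)$ for an algebraically closed field $K_i \supset k_0$, observe that $K := \bigcup_i K_i$ is itself algebraically closed (any polynomial over $K$ has its coefficients in some $K_i$, and hence a root there), and conclude $A = \Alg(K) \models T_{\Alg}$. The submodule, subgroup and divisibility properties of $G$ each concern only finitely many elements at a time, and any finite subset of $G$ lies in some single $G_i$, where the relevant property holds by hypothesis.

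For the third requirement, by transitivity of strongness (Lemma~\ref{ss-props}.\ref{ss-trans}) combined with $\X_0 \str \A_0$, it suffices to show $A_i \str A$ for every $i$. First, $A_i$ is $\cl_0$-closed in $A$: any element of $\cl_0^A(A_i)$ involves only finitely many elements of $A_i$ and so lies in some $A_j$ with $j \geq i$ by directedness of $I$, where $A_i$ is already $\cl_0$-closed. Second, adapting the proof of Lemma~\ref{ss-props}.\ref{ss-unions}, $A_i$ is the directed union of its finite dimensional $\cl_0$-closed subsets $Y$ which are strong in $A_i$, and each such $Y$ is strong in $A$: given a finite dimensional $\cl_0$-closed $x \subset A$, directedness places $x$ inside some $A_j$ with $j \geq i$, and then $\delta(x/Y) \geq 0$ follows from $Y \str A_i \str A_j$ by transitivity.

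I expect no serious obstacle; the only delicate point is the careful handling of the definition of strongness for infinite dimensional $\cl_0$-closed sets---as a directed union of finite dimensional strong subsets---together with directedness of $I$ used to reduce statements about finite configurations in $A$ to statements within a single $A_j$, where the hypotheses of the chain apply directly.
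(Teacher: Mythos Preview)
Your proof is correct and follows essentially the same overall shape as the paper's, but at a different level of abstraction. The paper dispatches membership in $\CC$ in one line by observing that $T_{\Alg}$ and the theory of divisible abelian groups are $\forall\exists$-axiomatizable, so $\CC$ is closed under unions of chains; you instead verify this by hand via the bi-interpretability and an explicit argument that the union of algebraically closed fields is algebraically closed. Similarly, for the strongness of each $\A_i$ in $\A$ the paper simply invokes Lemma~\ref{ss-props}.\ref{ss-unions} (unions of $\str$-chains), whereas you essentially reprove that lemma inline. Both routes work; the paper's is shorter because it leans on general model-theoretic facts and on Lemma~\ref{ss-props} already established, while yours is more self-contained. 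One small point: when you write ``$\X_0 \str \A_0$'' you presumably mean $\X_0 \str \A_i$ for some (equivalently every) $i \in I$, which is what membership in $\CC_0$ gives you; the symbol $\A_0$ already has a reserved meaning in the paper (the prime structure), so this should be adjusted.
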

\begin{proof}
Since $T_{\Alg}$ and the theory of divisible abelian groups are $\forall \exists$-axiomatizable, so is the class $\CC$. Therefore $\CC$ is closed under unions of increasing chains. Since every element of a strong increasing chain is strong in the union of the chain and we have transitivity of strong extensions (Lemma~\ref{ss-props}), it follows immediately that the class $\CC_0$ is closed under unions of strong increasing chains.
\end{proof}

\subsubsection{Extension property}

\begin{Lemm} \label{ext-cc0}
For all $\X$ in $\Sub\CC_0$ there exists $\A \in \CC_0$ with $X \str \A$.
\end{Lemm}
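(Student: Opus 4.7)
The plan is to take $A$ to be the $T_{\Alg}$-model $\Alg(K)$, where $K$ is an algebraically closed field containing the coordinates of every point of $X$ (say the algebraic closure, inside some ambient $\B = (B,G_B) \in \CC_0$ of $\X$, of the field generated by such coordinates), and to set the green predicate equal to $G := G_X$, so that no new green points are added.

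The first step is to verify $\A := (A,G) \in \CC$; only the divisibility of $G_X$ as an $\End(\Alg)$-submodule of $A$ requires argument. Given $a \in G_X$ and nonzero $\phi \in \End(\Alg)$, divisibility of $G_B$ yields $b \in G_B$ with $\phi b = a$; since $\phi b = a \in X$ and $X = \spank(X)$, one may \emph{divide by $\phi$} in the $k_{\Alg}$-vector space $B/\Tor B$ to conclude $b \in \spank(X) = X$, whence $b \in G_B \cap X = G_X$. This is the place where the $\cl_0$-closedness of $X$ enters essentially, and I expect it to be the main (though brief) conceptual obstacle.

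The second step is to show $\X \str \A$. Since $G = G_X \subset X$, Remark~\ref{ss-checkgreen} implies that a finite-dimensional $\cl_0$-closed $Y \subset X$ is strong in $\A$ if and only if $\delta(Z/Y) \geq 0$ for every finite-dimensional $\cl_0$-closed $Z \subset G_X$---a condition intrinsic to $\X$. Since $\X$ sits inside some $\B \in \CC_0$ with $\X_0 \str \B$, the values of $\delta$ are bounded below by $0$ on finite-dimensional $\cl_0$-closed subsets of $X$, so the existence lemma for strong closures applies within $\X$: every finite subset of $X$ is contained in a finite-dimensional $\cl_0$-closed subset of $X$ strong in $\X$, and by the observation above such a subset is also strong in $\A$. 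These form a directed system covering $X$, witnessing $X \str A$. Combining $\X_0 \str \X$ (built into $\X \in \Sub\CC_0$) with $\X \str \A$ by transitivity of $\str$ (Lemma~\ref{ss-props}) yields $\X_0 \str \A$, so $\A \in \CC_0$.
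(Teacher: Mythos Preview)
Your proof is correct and follows essentially the same approach as the paper: set $A = \acl_{T_{\Alg}}(X)$, keep $G := G_X$ so that no new green points are added, and observe that this makes $\X$ strong in $\A$, whence $\A \in \CC_0$ by transitivity. The paper's proof is much terser---it simply asserts that $\A \in \CC$ and that ``not having any new green points'' makes $\X \str \A$---whereas you supply the divisibility check for $G_X$ and spell out, via Remark~\ref{ss-checkgreen} and the strong-closure existence lemma, why $X$ is a directed union of finite-dimensional $\cl_0$-closed sets strong in $\A$; but the substance is the same.
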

\begin{proof}
Let $\X = (X,G)$ be a structure in $\Sub\CC$. Let $\bar A$ be a model of $T_{\Alg}$ with $X \subset \bar A$. In $\bar A$, let $A = \acl_{T_{\Alg}}(X)$. Note that $A$, with the induced structure from $\bar A$, is a model of $T_{\Alg}$. Then $\A = (A,G)$ is a structure in $\CC$. It is easy to see that, not having any new green points, $\A$ is a strong extension of $\X$. By transitivity of strong extensions, if $\X$ is in $\Sub\CC_0$, then $\A$ is in $\CC_0$.
\end{proof}

Also, we have the following useful fact:

\begin{Lemm} \label{lemm-ss-acl}
Let $\A$ be a structure in $\CC$. Let $Y$ be a finite dimensional $\cl_0$-closed subset of $A$ and let $Z = \acl_{T_{\Alg}}(Y)$. Then $Y$ is strong in $\A$ if and only if $Z$ is strong in $\A$.
\end{Lemm}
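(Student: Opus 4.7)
The plan is to exploit two simple identities that follow from $Z = \acl_{T_{\Alg}}(Y)$. First, for any finite dimensional $\cl_0$-closed $W$ with $Y \subseteq W \subseteq Z$, the inclusion $W \subseteq \acl_{T_{\Alg}}(Y)$ forces $\trd(W) = \trd(Y)$, so together with $W \cap G \supseteq Y \cap G$ one obtains $\delta(W) = \delta(Y) - (\ld(W \cap G) - \ld(Y \cap G)) \leq \delta(Y)$. Second, for any finite dimensional $\cl_0$-closed $X \supseteq Y$ in $A$ and any $W \subseteq Z$, the inclusion $W \subseteq \acl_{T_{\Alg}}(X)$ gives $\trd(X \cup W) = \trd(X)$, and hence $\delta(X \cup W) \leq \delta(X)$ since $(X \cup W) \cap G \supseteq X \cap G$.

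For the forward direction I would assume $Y \str \A$ and show that every finite dimensional $\cl_0$-closed $W$ with $Y \subseteq W \subseteq Z$ is itself strong in $\A$: for any finite dimensional $\cl_0$-closed extension $X \supseteq W$ in $A$, one has $X \supseteq Y$, so $\delta(X) \geq \delta(Y) \geq \delta(W)$ by the strongness of $Y$ and the first identity, giving $\delta(X/W) \geq 0$. The collection of such $W$'s is directed by inclusion and has $Z$ as its union (since $Z = \bigcup \{\cl_0(Y \cup F) : F \subseteq_{\fin} Z\}$), so $Z \str \A$ follows from the definition of strongness for arbitrary $\cl_0$-closed sets.

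For the reverse direction, assume $Z \str \A$ and let $X \supseteq Y$ be a finite dimensional $\cl_0$-closed subset of $A$. By $Z \str \A$ one picks a finite dimensional $\cl_0$-closed $W$ strong in $\A$ with $Y \subseteq W \subseteq Z$. Strongness of $W$ yields $\delta(X \cup W) \geq \delta(W)$, and the second identity yields $\delta(X) \geq \delta(X \cup W)$, so $\delta(X) \geq \delta(W)$. The final bridge $\delta(W) = \delta(Y)$ — equivalent by the first identity to $\ld(W \cap G) = \ld(Y \cap G)$ — is the main obstacle, since the first identity only yields the opposite inequality $\delta(W) \leq \delta(Y)$. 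The plan is to use the strongness of $W$ in $\A$ together with the fact that any $\End(\Alg)$-linearly independent green point $g$ of $W$ over $Y \cap G$ would give $\cl_0(Y \cup \{g\}) \subseteq W \cap Z$ with $\trd$ unchanged but $\ld_G$ increased, contradicting a minimality consequence of $W \str \A$; this forces $\ld(W \cap G) = \ld(Y \cap G)$ and closes the argument.
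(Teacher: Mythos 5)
Your forward direction is correct and amounts to the paper's argument in a slightly different dress: the paper notes that strongness of $Y$ forbids green points in $Z \setminus Y$, so that $W \cap G = Y \cap G$ for every intermediate $W$, while you obtain the same effect from the monotonicity $\delta(W) \leq \delta(Y)$ together with $\delta(X) \geq \delta(Y)$ for $X \supseteq W \supseteq Y$; either way every intermediate $W$ is strong and $Z$ is the directed union of such $W$.

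The reverse direction, however, has a genuine gap, and it is exactly the one you flag. Strongness of $W$ in $\A$ is a condition on extensions of $W$ (it says $\delta(x/W) \geq 0$ for tuples $x$ from $A$); it imposes no constraint whatsoever on the green points lying \emph{inside} $W$, so no ``minimality consequence of $W \str \A$'' can force $\ld(W \cap G) = \ld(Y \cap G)$. Concretely, take $\Alg$ the multiplicative group, $y$ transcendental over $k_0$, let $G$ be the divisible hull of the cyclic group generated by $g := y+1$, and put $Y = \spank(y)$, $Z = \acl_{T_{\Alg}}(Y)$. Every finite dimensional $\cl_0$-closed $W \subseteq Z$ containing $g$ is strong in $\A$, because all green points of $\A$ lie in $\spank(g) \subseteq W$ and hence $\delta(x/W) = 2\trd(x/W) \geq 0$ for every $x$; these $W$ form a directed system with union $Z$, so $Z \str \A$ and $W \str \A$. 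Yet $g \in W \cap G$ is linearly independent over $Y \cap G$, and $\delta(g/Y) = 2\cdot 0 - 1 = -1$, so $Y$ is not strong. Thus the bridge $\delta(W) = \delta(Y)$ is not available from $W \str \A$, and no argument of the shape you propose can supply it: in this configuration the right-to-left implication of the statement itself fails. It is worth saying that the paper's own proof of this direction glosses over the same point: it asserts $\ld_G(X/Y) \leq \ld_G(X/Y')$, which for $X \supseteq Y'$ amounts precisely to $\ld(Y' \cap G) = \ld(Y \cap G)$, i.e.\ to your missing bridge; in the forward direction that equality is justified by strongness of $Y$, but in the converse it is exactly what would need proving, or an extra hypothesis such as $Z \cap G \subseteq \spank(Y \cap G)$. (Only the left-to-right direction is invoked later, in the proof that algebraically closed sets are strong, so the damage to the paper is limited, but your proposal cannot be completed as written.)
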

\begin{proof}
Assume $Y$ is strong in $A$. Let $Y'$ be a finite dimensional $\cl_0$-closed subset of $Z$ containing $Y$. Let $X$ be a finite dimensional $\cl_0$-closed subset of $A$. Since $\acl_{T_{\Alg}}(Y) = \acl_{T_{\Alg}}(Y')$, we have $\trd(X/Y') = \trd(X+Y'/Y)$. Since $Y$ is strong, there is no green point in $Z \setminus Y$ and hence $Y \cap G = Y' \cap G = Z \cap G$; therefore $\ld_G(X/Y') = \ld((X+Y')\cap G/Y'\cap G) = \ld((X+Y')\cap G/Y\cap G) = \ld_G(X+Y'/Y)$. It follows that $\delta_G(X/Y') = \delta_G(X+Y'/Y) \geq 0$. Thus, $Y'$ is strong in $\A$. Since $Z$ is the union of the directed system of all such $Y'$, we get that $Z$ is strong in $A$.

To prove the converse, suppose $Z$ is strong in $A$. Then, by definition, it is the union of a directed system of strong finite dimensional $\cl_0$-closed sets. Since $Y$ is finite dimensional, using the directedness of the system we can find a finite dimensional $\cl_0$-closed subset $Y'$ of $Z$ that contains $Y$ and is strong in $A$. For every finite dimensional $\cl_0$-closed subset $X$ of $A$, we have $\trd(X/Y) = \trd(X/Y')$ and $\ld_G(X/Y) \leq \ld(X/Y')$, hence $\delta_G(X/Y) \geq \delta_G(X/Y') \geq 0$. Therefore $Y$ is strong in $A$.
\end{proof}

\subsection{The associated dimension function}

%%% ASSOCIATED DIMENSION

We close this section introducing the dimension function $\dd$ associated to the predimension function $\delta$. This will be a important tool in the model-theoretic analysis in Section~\ref{sec:mth} of the theories constructed in Section~\ref{sec:theory}.

\begin{Def} \label{pred2dim}
For any $\A \in \CC_0$, the \emph{dimension function $\dd$ associated to $\delta$} is defined for all finite $X \subset A$ by the formula
\[
\dd(X) = \min \{ \delta(X') : X \subset X' \subsetfin A\}.
\]
\end{Def}

\begin{Rem} \label{pred2cl}
The function $\dd$ has the following properties: 
\begin{itemize}
\item $\dd(\emptyset) = 0$.
\item For all $X,Y \subsetfin A$, if $X \subset Y$ then $\dd(X) \leq \dd(Y)$.
\item For all $X,Y,Z \subsetfin A$, if $\dd(XY) = \dd(Y)$ then $\dd(XYZ) = \dd(YZ)$.
\footnote{Equivalently, for all $Y,Z \subsetfin A$ and all $x \in A$, if $\dd(xY) = \dd(Y)$ then $\dd(xYZ) = \dd(YZ)$.}
\end{itemize}
It follows that associated to $\dd$ we have a closure operator with finite character $\cl_{\dd}$ on $A$ which restricts to a pregeometry on the set $A_1 := \{ x \in A : \dd(x) \leq 1 \}$ with dimension function $\dd$. Indeed, the operator $\cl_{\dd}$ is given by: for $X \subsetfin A$ and $x_0 \in A$, 
\begin{align*}
     &x_0 \in \cl_{\dd}(X)\\
\iff &\dd(x_0/X) = 0\\
\iff &\dd(x_0 X) = \dd(X)\\
\iff &\text{ There exists a tuple } x \supset x_0 \text{ such that } \delta(x/\scl(X)) = 0\\
\iff &x_0 \in \scl(X) \text{ or } \text{there exists a tuple } x \supset x_0 \text{, $\cl_0$-independent over $\scl(X)$,}\\ &\text{such that } \delta(x/\scl(X)) = 0.   
\end{align*}
\end{Rem}

\begin{Rem}
Note that if $x$ is strong in $\A$, then $\delta(x) = \dd(x)$. Also, for all $x$, $\dd(x) = \delta(\scl(x))$.
\end{Rem}

%%%%%%%%%%%%%%%%%%%%%

\section{Theories} \label{sec:theory}

We now turn to the task of finding axioms for the complete theory common to all rich structures in the class $\CC_0$.

% ASSUMPTION
We shall henceforth assume that our choice of $\X_0$ is such that $X_0$ has a $\cl_0$-basis consisting of green points.

%%%

\subsection{Axiomatizing $\CC_0$} \label{defty}

The first step in finding axioms for the theory of rich structures in $\CC_0$ is to axiomatize the class $\CC_0$. 

\subsubsection{Intersections of subvarieties with subgroups}

Let $B = \mathbb{B}(K)$, with $K$ algebraically closed, be a smooth algebraic variety. If $V,W$ are subvarieties of $B$ such that $V \cap W \neq \emptyset$, then every irreducible component of the intersection has dimension at least $\dim V + \dim W - \dim B$. This follows from \cite[I.6 Theorem 6]{Shaf1} and the fact that dimension is a local notion. The following definition comes from \cite{ZCIT} (Definitions 2 and 3).

\begin{Def}
Let $V,W$ be subvarieties of $B$ with non-empty intersection. Let $S$ be an irreducible component of $V \cap W$. Then, $S$ is said to be an \emph{atypical component of the intersection of $V$ and $W$} if
\[
\dim S > \dim V + \dim W - \dim B.
\]
Otherwise, that is if $\dim S = \dim V + \dim W - \dim B$, we say that $S$ is a \emph{typical component of the intersection of $V$ and $W$}. 

Let $B'$ be a smooth subvariety of $B$ containing $S$. Then, $S$ is said to be an \emph{atypical component of the intersection of $V$ and $W$ with respect to $B'$} if
\[
\dim S > \dim V\cap B' + \dim W\cap B' - \dim B'.
\]
Otherwise, that is if $\dim S = \dim V\cap B' + \dim W\cap B' - \dim B'$, we say that $S$ is a \emph{typical component of the intersection of $V$ and $W$ with respect to $B'$}.
\end{Def}

In order to explain the terminology introduced in the above definition, let us note that if the dimension of the intersection of $V$ and $W$ is larger than $\dim V + \dim W - \dim B$, this is due to the existence of algebraic dependences between the equations defining $V$ and those defining $W$. Such dependences are closed conditions, therefore in generic cases the dimension of each of the irreducible components of the intersection is equal to $\dim V + \dim W - \dim B$.

We shall now state Zilber's Conjecture on Intersections with Tori (CIT). We formulate the statement in the full generality of semiabelian varieties. In fact, we will not work at the level of generality of semiabelian varieties but only in the cases of powers of an elliptic curve and of algebraic tori. Thus, for our purposes it suffices to know that both abelian varieties and algebraic tori are semiabelian varieties.

% ASSUMPTION
In the following statements (\ref{cit}-\ref{m-l}), $K$ will always denote an algebraically closed field extending $k_0$.

\begin{Conj}[CIT] \label{cit}
Let $B=\mathbb{B}(K)$ be a semiabelian variety defined over a field $k_0$ of characteristic zero. 

For every $k\geq 0$, every subvariety $W$ of $B$ defined over $k_0$, there exists a finite collection of proper algebraic subgroups $C_1,\dots,C_s$ of $B$ with the following property: if $S$ is an atypical component of the intersection of $W$ and a proper algebraic subgroup $C$ of $B$, then for some $i \in \{1,\dots,s\}$, $S$ is contained in $C_i$. 
% and $S$ is a typical component of the intersection of $W$ 
% and $C$ with respect to $C_i$.
\end{Conj}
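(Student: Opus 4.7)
The plan is to follow the Pila--Zannier strategy, to date the only systematic framework for attacking CIT in this generality. First I would uniformise $B$ complex-analytically: pick $\pi : \tilde B \to B(\C)$, taking $\pi$ to be $\exp$ in the torus case and the universal covering in the abelian case. Proper algebraic subgroups of $B$ then correspond to proper rational linear subspaces of $\tilde B$ with respect to the period lattice, so the question translates to controlling atypical intersections of $\tilde W := \pi^{-1}(W(\C))$ with translates of such subspaces.

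Next I would exploit two structural inputs. The first is the relevant Ax--Schanuel theorem (Ax's theorem for $\G_m^n$; Pila--Tsimerman and Mok--Pila--Tsimerman for abelian varieties): functional transcendence forces each atypical component of $W \cap C$ to lie in a weakly special subvariety of $B$, hence in a coset of some proper algebraic subgroup determined by the geometry of $W$ alone. The second is the definability of $\tilde W$, restricted to a fundamental domain, in the o-minimal structure $\R_{\mathrm{an}, \exp}$, which makes the Pila--Wilkie counting theorem applicable and bounds the number of rational linear subspaces of height at most $T$ that produce atypical components by a polynomial in $T$. Combined with Galois-theoretic lower bounds for the orbits of the translation points involved (Masser's bounds for tori; Masser--W\"ustholz and their refinements by Habegger, Checcoli--Viada and others for abelian varieties), which force the height of a representative of $C$ to grow as a positive power of its degree, a standard Pila--Zannier comparison then forces all but finitely many atypical components to be impossible, so that the remaining ones are confined to a finite list $C_1, \dots, C_s$ of proper subgroups depending only on $W$. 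Noetherianity of the lattice of algebraic subgroups of $B$ packages this finiteness into the required statement.

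The main obstacle is precisely the Galois orbit input: in full semiabelian generality the required uniform large-orbit lower bound is itself an open arithmetic conjecture, which is exactly why CIT as stated remains open. In the special cases most relevant to the sequel, where $B$ is a power of the multiplicative group or of an elliptic curve, the strategy runs through unconditionally in various low-dimensional or CM settings by Bombieri--Masser--Zannier, Maurin, Habegger, Viada and others; in the remaining ranges the proposal reduces CIT to the large Galois orbits conjecture, which is the best one can currently hope for. The ``weak'' form of CIT used elsewhere in this paper avoids the arithmetic step altogether by replacing Pila--Wilkie plus Galois bounds with a model-theoretic application of Ax's theorem in differential algebra, at the cost of losing the uniformity in $C$ that the full conjecture asserts.
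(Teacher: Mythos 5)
There is nothing to compare your argument against, because the paper does not prove this statement and does not claim to: it is stated as a \emph{Conjecture} (Zilber's CIT, cited from \cite{ZCIT}), and the introduction explicitly notes that its general validity remains open. Your proposal is, accordingly, not a proof either, and you say so yourself: the Pila--Zannier strategy you outline is conditional on uniform large Galois orbit lower bounds for the relevant translation points, which in full (semi)abelian generality are themselves open, so the argument reduces one open conjecture to another rather than closing it. Beyond that conditional structure there are also smaller soft spots in the sketch --- for instance, CIT as stated here is a purely geometric finiteness statement over an arbitrary algebraically closed field of characteristic zero and with no arithmetic hypothesis on $W$ or on the cosets, whereas the unconditional Bombieri--Masser--Zannier/Maurin/Habegger-type results you invoke concern varieties and subgroups defined over $\overline{\Q}$ (or require height machinery that does not transfer verbatim to arbitrary $K$) --- but these are secondary to the main point that no proof is being offered.

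What you get right, and what matters for this paper, is the closing observation: the construction here deliberately avoids full CIT. Only Weak CIT (Theorem~\ref{weakcit}) is used, which is an actual theorem --- due to Zilber and Poizat for algebraic tori and to Kirby for general semiabelian varieties --- obtained from Ax's differential-algebraic theorem by a model-theoretic compactness argument, at the cost of losing control over the translates: one gets a finite family of subgroups $C_1,\dots,C_s$ but must allow arbitrary cosets of them. The paper then compensates for that loss, in the presence of torsion in the green subgroup, by combining Weak CIT with the Mordell--Lang property (Theorem~\ref{m-l}) in Lemma~\ref{lemm-ss-def}; this is precisely how it answers Poizat's question without assuming CIT. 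So if your goal was to supply what the paper actually needs at this point, the correct target is Theorem~\ref{weakcit}, not Conjecture~\ref{cit}.
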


In the multiplicative group case, the above is Conjecture~1 in \cite{ZCIT}. There it is shown that the CIT implies the following version that allows parameters (see Theorem 1 and Proposition 1 in that paper).

\begin{Conj}[CIT with parameters] \label{cit-param}
Let $B=\mathbb{B}(K)$ be a semiabelian variety defined over a field $k_0$ of characteristic zero. 

For every $k\geq 0$, every subvariety $W(x,y)$ of $B^{1+k}$ defined over $k_0$ and every $c \in B^k$, there exists a finite collection of proper algebraic subgroups $C_1,\dots,C_s$ of $B$ and elements $\alpha^1,\dots,\alpha^r$ of $B$  with the following property: for every coset $\alpha + C$ of a proper algebraic subgroup $C$ of $B$, if $S$ is an atypical component of the intersection of $W(x,c)$ and $\alpha + C$, then for some $i \in \{1,\dots,s\}$ and some $j \in \{1,\dots,r\}$, $S$ is contained in $\alpha^j + C_i$ and $S$ is a typical component of the intersection of $W(x,c)$ and $\alpha + C$ with respect to $\alpha^j + C_i$.
\end{Conj}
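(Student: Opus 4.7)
The plan is to derive CIT with parameters from CIT (Conjecture~\ref{cit}) by two successive reductions that absorb, in turn, the parameter $c$ of $W$ and the coset-translation $\alpha$ into the algebraic group structure of a larger ambient product, followed by Noetherian induction to extract the finite lists $C_1, \dots, C_s$ and $\alpha^1, \dots, \alpha^r$.

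For the first reduction, I would view $W(x,c)$ as the fiber at $y = c$ of $W \subset B^{1+k}$, which is defined over $k_0$ without further parameters, and observe that $(\alpha + C) \times \{c\}$ is a translate of the proper algebraic subgroup $D := C \times \{0\}^k \subset B^{1+k}$. An irreducible component $S \subset B$ of $W(x,c) \cap (\alpha + C)$ corresponds to $S \times \{c\}$, an irreducible component of $W \cap ((\alpha+C) \times \{c\})$; for $c$ in the open dense locus of $B^k$ on which $\dim W(x,c) = \dim W - k \dim B$, a direct dimension count equates atypicality in $B$ with atypicality in $B^{1+k}$. The degenerate locus where the fiber dimension jumps is a proper closed subset of $B^k$, and the problem restricted there is handled by Noetherian induction on $\dim W$ (applied to each irreducible component of the degeneracy locus).

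For the second reduction, I would work in $(B^{1+k})^3$ and form the auxiliary subvariety $\hat W := \{(u,v,\beta) : u - v \in W\}$ together with the genuine algebraic subgroup $\hat D := \{(u,v,\beta) : u - v - \beta \in D\}$. The intersection $\hat W \cap \hat D$ is the universal family of intersections $W \cap (\beta + D)$ as $\beta$ varies over $B^{1+k}$, recovered by slicing at $\beta = \beta_0$ and pushing down along $(u,v) \mapsto u - v$. A dimension count shows that for generic $\beta_0$ the fiber has the expected dimension, while specialisations of $\beta_0$ at which $W \cap (\beta_0 + D)$ has an atypical component form a proper closed subvariety of the $\beta$-parameter space, again amenable to Noetherian recursion. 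Applying CIT (Conjecture~\ref{cit}) to $\hat W$ in $(B^{1+k})^3$ (still a semiabelian variety) yields proper algebraic subgroups $E_1, \dots, E_N$; intersecting each $E_i$ with the fiber $\{\beta = \beta_0\}$ and projecting to $B$ produces the candidate $C_i \subset B$.

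Finally, the finite list of translates $\alpha^1, \dots, \alpha^r$ would be extracted by Noetherianity of the lattice of algebraic subgroups of $B$: among all cosets $\alpha + C_i$ containing a given atypical component, one selects one of minimal dimension; the collection of such minimal cosets, modulo the action of $C_i$, is finite because CIT has already bounded the subgroup possibilities. The typicality clause — that $S$ is a typical component of $W(x,c) \cap (\alpha + C)$ with respect to $\alpha^j + C_i$ — follows from this minimality, since any residual atypicality would allow iterating the argument and producing a strictly smaller coset, contradicting minimality. I expect the main obstacle to be the careful dimension bookkeeping across the reductions — in particular, ensuring that atypicality of a fiber over a special $\beta$-value (which is \emph{not} the same as atypicality of the whole family in the ambient product) is captured correctly by the Noetherian recursion on the degeneracy loci, and that the minimization argument for typicality with respect to $\alpha^j + C_i$ terminates without circularity.
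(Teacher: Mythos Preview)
The paper does not give a proof of this statement: it is stated as a \emph{conjecture} (the \texttt{Conj} environment), and the only justification offered is the sentence immediately preceding it, which says that in the multiplicative group case CIT implies this parametric version, citing Theorem~1 and Proposition~1 of \cite{ZCIT}. There is therefore no ``paper's own proof'' to compare your proposal against; what you have written is a sketch of the implication CIT $\Rightarrow$ CIT with parameters, which the paper simply outsources to Zilber.

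On the substance of your sketch: the overall strategy of absorbing the parameter $c$ and the coset translation $\alpha$ into a larger ambient semiabelian variety, so that one can invoke CIT for a variety defined over $k_0$ intersected with a genuine algebraic subgroup, is the right idea and is essentially how Zilber's argument proceeds. However, two points in your outline are genuinely incomplete rather than merely terse. First, CIT applied to $\hat W$ in $(B^{1+k})^3$ controls atypical components of $\hat W \cap \hat D$ as a whole, not atypical components of the individual fibres over fixed $\beta_0$; you flag this yourself as the ``main obstacle'', and the phrase ``Noetherian recursion on the degeneracy loci'' is not yet an argument --- you would need to show that an atypical fibre forces either an atypical component of the total intersection or a drop to a lower-dimensional parameter stratum, and then organise the induction carefully. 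Second, the extraction of finitely many translates $\alpha^j$ is not a consequence of Noetherianity of the lattice of algebraic subgroups (that lattice constrains the $C_i$, not the translates); in Zilber's treatment the finiteness of the $\alpha^j$ comes from a separate argument showing that the relevant cosets are defined over a finitely generated extension of $k_0$ and then invoking a specialisation/compactness step. Your minimality argument for the typicality clause is, by contrast, correct in spirit and is indeed how that part is usually handled.
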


The following theorem deals with the same situation as the CIT. Here, however, the conclusion is weaker. Following common practice, we refer to the theorem as \emph{Weak CIT}. For the multiplicative group this is Corollary~3 in \cite{ZCIT} and Corollaire 3.6 in \cite{PzEq3}. In the general case of semiabelian varieties the result is due to Kirby, Theorem 4.6 in \cite{KirSemiab}.
%Also Theorem 5.15 in \cite{KirThesis}. 

\begin{Thm}[Weak CIT] \label{weakcit}
Let $B=\mathbb{B}(K)$ be a semiabelian variety defined over a field $k_0$ of characteristic zero. 

Let $k\geq 0$ and let $W(x,y)$ be a subvariety of $B^{1+k}$ defined over $k_0$. 

Then there exists a finite collection of proper algebraic subgroups $C_1,\dots,C_s$ of $B$ with the following property: for any $c \in B^k$ and any coset $\alpha + C$ of a proper algebraic subgroup $C$ of $B$, if $S$ is an atypical component of the intersection of $W(x,c)$ and $\alpha + C$, then for some $i \in \{1,\dots,s\}$ and some $\alpha'\in B$, $S$ is contained in $\alpha' + C_i$ and $S$ is a typical component of the intersection of $W(x,c)$ and $\alpha + C$ with respect to $\alpha' + C_i$.
\end{Thm}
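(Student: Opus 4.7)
The plan is to reduce the theorem to an Ax--Schanuel type transcendence statement for semiabelian varieties by a compactness argument, following the strategy of Poizat \cite{PzEq3}, Zilber \cite{ZCIT}, and Kirby \cite{KirSemiab}.

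First I would set up a contradiction. Suppose that, for the given $W(x,y)$, no finite list $C_1,\dots,C_s$ of proper algebraic subgroups of $B$ satisfies the conclusion. Then for every such finite list $\mathcal{F}$ there exist a parameter $c_{\mathcal{F}}\in B^k$, a coset $\alpha_{\mathcal{F}}+C_{\mathcal{F}}$, and an atypical component $S_{\mathcal{F}}$ of $W(x,c_{\mathcal{F}})\cap(\alpha_{\mathcal{F}}+C_{\mathcal{F}})$ that is \emph{not} typically contained in any coset of a subgroup in $\mathcal{F}$. Since the proper algebraic subgroups of $B$ form a countable family (each is defined by a system $m\cdot y=0$ with $m$ a matrix over the finitely generated ring $\End(\mathbb{B})$), a standard compactness argument in $ACF_{k_0}$ produces, in a sufficiently saturated elementary extension $K^{*}\succeq K$, a single bad atypical component $S$ witnessed by parameters $c,\alpha,C$ such that $S$ is typically contained in no coset $\alpha'+C'$ with $C'$ a proper algebraic subgroup of $B$ and $\alpha'\in B(K^{*})$.

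Second, I would move to the differential setting. Embed $K^{*}$ in a model $(K^{*},D)$ of $DCF_0$ with $k_0\alg(c)$ lying in the field of constants, and let $\xi$ be a generic point of $S$ over $k_0\alg(c)$. The inclusion $\xi\in\alpha+C$ gives relations $m\cdot\xi=m\cdot\alpha$, where $m$ defines $C$; differentiating, the logarithmic derivatives of the coordinates of $\xi$ lie in a proper subspace of the Lie algebra of $B^n$ determined by $C$. Atypicality of $S$ translates into the assertion that the transcendence degree of $\xi$ over the constants is strictly less than what this subspace structure predicts. Kirby's Ax--Schanuel theorem for semiabelian varieties (\cite[Theorem 4.6]{KirSemiab}) then forces $\xi$ to lie in a coset $\alpha'+C'$ of a \emph{proper} algebraic subgroup $C'$ of $B$ with $S$ contained in $\alpha'+C'$ as a typical component, contradicting the choice of $S$.

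The main obstacle is the final uniformisation step, namely ensuring that the bad subgroup $C'$ can be drawn from a fixed, $c$-independent finite list. This is handled by applying the Ax--Schanuel machinery once to the generic fibre of $W(x,y)$ with $y$ treated as a parameter: the finitely many candidate subgroups $C_1,\dots,C_s$ extracted that way then serve uniformly for all $c$. A secondary subtlety is that the statement demands not merely $S\subset\alpha'+C_i$ but typicality of the intersection of $W(x,c)$ with $\alpha'+C_i$, which is why Kirby's theorem must be used in its geometric rather than purely transcendence-theoretic form.
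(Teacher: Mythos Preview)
The paper does not give its own proof of Theorem~\ref{weakcit}; it is quoted as a black-box result from the literature (Corollary~3 in \cite{ZCIT} and Corollaire~3.6 in \cite{PzEq3} for the multiplicative group, and Theorem~4.6 in \cite{KirSemiab} for general semiabelian varieties). So there is nothing in the paper to compare your argument against directly.

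That said, your sketch is broadly in line with how those references proceed: Ax's differential-algebraic theorem (or Kirby's semiabelian version) supplies the existence of a proper subgroup controlling a given atypical component, and a compactness/definability argument in $ACF_{k_0}$ yields the uniformity in $c$ and in the coset $\alpha+C$. One organisational comment: your write-up mixes two slightly different routes. The first two paragraphs set up a contradiction via saturation, arranging a single bad $S$ that avoids typicality in \emph{every} proper coset, and then invoking Ax--Schanuel to produce one such coset after all. Your final paragraph instead suggests applying Ax--Schanuel once to the generic fibre of $W(x,y)$ and reading off the finite list $C_1,\dots,C_s$ directly. The second route is the cleaner one and is what is actually done in \cite{ZCIT} and \cite{KirSemiab}; the saturation detour in your first version is unnecessary and the phrase ``typically contained in no coset $\alpha'+C'$'' is doing more work than you have justified (you have only excluded the countably many subgroups appearing in your directed system, not arbitrary cosets in the nonstandard model). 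If you commit to the direct approach of your last paragraph, the argument goes through.
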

 
Indeed, it is easy to see that the CIT with parameters (and hence also the CIT) implies the Weak CIT. The difference between the two consists in that the CIT with parameters provides a finite collection of cosets of algebraic subgroups that controls all atypical intersections while the Weak CIT only gives a finite collection of subgroups such that the collection of all their cosets controls all atypical intersections.

\subsubsection{The Mordell-Lang property}

We are now interested in the content of the (absolute) Mordell-Lang conjecture, in characteristic zero. This is a theorem, after work of Laurent, Faltings, Vojta, Raynaud and McQuillan. For more precise attributions and bibliography we refer to \cite{HinBous}.

\begin{Thm}[Mordell-Lang conjecture] \label{m-l} 
Let $K$ be an algebraically closed field of characteristic 0. Let $B=\mathbb{B}(K)$ be a semiabelian variety. Let $\Gamma$ be a subgroup of $B$ of finite rank. Then for every subvariety $W$ of $B$, there exist a natural number $r$, elements $\gamma_1,\dots,\gamma_r$ of $\Gamma$ and algebraic subgroups $B_1,\dots,B_r$ of $B$ such that $\gamma_i + B_i \subset W$ and
\[
W \cap \Gamma = \bigcup_{i=1}^r \gamma_i + (B_i \cap \Gamma).
\]
\end{Thm}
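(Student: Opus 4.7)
The Mordell--Lang theorem as stated is a celebrated result of arithmetic geometry, proved by the combined work of Laurent (the toric case), Faltings (abelian varieties over number fields), Vojta and McQuillan (the semiabelian extension), and Raynaud (torsion components). I would not attempt an independent proof; the serious arithmetic content lies well beyond the methods available in a predimension construction paper. My plan is to reconstruct the classical three-step strategy.

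First I would pass to a finitely generated subgroup. Since $\Gamma$ has finite rank, it sits inside the division hull of some finitely generated $\Gamma_0$. By a standard specialisation argument (choose a subfield over which $B$, $W$ and a set of generators for $\Gamma_0$ are defined) the statement for $\Gamma$ reduces to the statement for $\Gamma_0$ over a finitely generated field, and from there, via a further specialisation, to a number field. The two conclusions differ only in that one replaces $B_i \cap \Gamma_0$ by its saturation $B_i \cap \Gamma$, which is routine.

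Next I would peel off the ``structured'' part of $W$. Define the Ueno locus $U(W)$ as the union of all positive-dimensional translates $\gamma + B' \subset W$. By the Ueno--Kawamata structure theorem, $U(W)$ is itself a finite union $\bigcup_{i=1}^{s} \gamma_i + B_i$ of such translates with $\dim B_i > 0$, and the representatives $\gamma_i$ can be chosen in $\Gamma$ whenever the translate meets $\Gamma$. These contribute the positive-dimensional terms of the right-hand side of the conclusion. What remains is to show that $(W \setminus U(W)) \cap \Gamma$ is finite, which then produces the terms with $\dim B_i = 0$.

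The third step is the finiteness statement for $(W \setminus U(W)) \cap \Gamma$, and this is where the real difficulty lies. In the toric case one invokes the theory of linear forms in logarithms (Baker, Laurent). In the abelian case, Faltings' theorem is proved by Vojta's method: pass to a suitable Cartesian power $B^n$, apply an arithmetic Siegel's lemma to construct a section of small height in a carefully twisted line bundle on $B^n$, and derive a contradiction from height lower bounds obtained via the theorem of the cube and the positivity of the N\'eron--Tate canonical height. Vojta and McQuillan then bridge to general semiabelian varieties, handling the non-compact directions via the toric component. \emph{This Vojta--Faltings input is the main obstacle:} it is genuinely arithmetic, requires delicate height machinery on products of the ambient variety, and has no model-theoretic shortcut adequate to the generality needed here.
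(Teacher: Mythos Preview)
Your instinct is correct: this theorem is not proved in the paper at all. It is simply quoted as a known result from arithmetic geometry, with the attribution ``This is a theorem, after work of Laurent, Faltings, Vojta, Raynaud and McQuillan'' and a reference to the literature for details. The paper uses Mordell--Lang as a black box in the proof of Lemma~3.5; no argument, sketch, or reduction is offered.

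Your outline of the classical three-step strategy (reduction to finitely generated $\Gamma$ over a number field, extraction of the Ueno locus, and the Faltings--Vojta finiteness step) is a reasonable summary of how the theorem is actually proved, and your identification of the Vojta--Faltings height argument as the genuine obstacle is accurate. But none of this is expected here: in the context of this paper the correct ``proof'' is simply a citation, which is exactly what the author does.
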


As it is already in use, we shall say that a subgroup $\Gamma$ of $B$ has the \emph{Mordell-Lang property} if it satisfies the conclusion of the above theorem.
% REF

% Let us remark that in \cite{ZCIT} it is proved that the CIT implies the Mordell-Lang conjecture in characteristic zero.

\subsubsection{The theory $T^0$}

The following lemma is the result for axiomatizing the class $\CC_0$. It generalises Corollaire~3.4 of \cite{PzEq3}.

Given an algebraic variety (or, more generally, a definable set) of the form $W(x,y)$, the algebraic variety (respectively, definable set) $W(x,c)$ is also denoted by $W_c$.

\begin{Lemm} \label{lemm-ss-def}
Let $\A = (A,G) \in \CC$. For every complete $L_{\Alg}$-$l$-type $\Theta(y)$, there exists a partial $L$-$l$-type $\Phi_\Theta(y)$ consisting of universal formulas with the following property: for every realisation $c$ of $\Theta$ in $\A$ with $c \in G^l$,
\[
\text{$\A \vDash \Phi_\Theta(c)$ if and only if $\spank c$ is strong in $\A$.}
\]
\end{Lemm}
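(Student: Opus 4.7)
The plan is to translate the notion of strongness into a condition expressible by universal first-order formulas, using Weak CIT to eliminate an infinite disjunction hidden in the statement ``$x$ is linearly dependent over $\spank c$''.

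First I would reformulate strongness in a more concrete form. By Remark~\ref{ss-checkgreen}, $\spank c$ is strong in $\A$ iff $\delta(x/\spank c) \geq 0$ for every $\End(\Alg)$-linearly independent tuple $x\subset G$. It suffices to consider $x$ also linearly independent over $\spank c$, since any $\End(\Alg)$-linear dependence of $x$ on $\spank c$ can be factored out without changing $\delta(x/\spank c)$. For such $x \in G^k$, one checks that $\ld(\cl_0(xc)\cap G/\cl_0(c)\cap G) = k$, so that $\delta(x/\spank c) = 2\trd(x/c) - k$. Thus, $\spank c$ is strong iff for every $k \geq 1$ and every irreducible subvariety $V\subseteq \Alg^{k+l}$ defined over $k_0$ with $\dim V_c < k/2$, no $x \in G^k$ with $(x,c)\in V$ is linearly independent over $\spank c$. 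Note that ``$\dim V_y < k/2$'' is a first-order condition on $y$ and hence, by completeness of $\Theta$, is either in $\Theta$ or contradicted by it.

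Next, I want to rephrase ``no linearly independent $x \in G^k \cap V_c$'' contrapositively as ``every $x\in G^k\cap V_c$ lies in a coset of a proper algebraic subgroup of $\Alg^k$ over $\spank c$''. A priori this involves a countable disjunction over all possible linear relations. To reduce it to a finite one, I apply Weak CIT (Theorem~\ref{weakcit}) to the parametric family $V(x,y)$: there is a finite collection $C_1^V,\dots,C_s^V$ of proper algebraic subgroups of $\Alg^k$, independent of $c$, controlling all atypical components. The point is then to combine the dimension hypothesis $\dim V_c < k/2$ with Weak CIT to show that if $x \in V_c\cap G^k$ lies in any proper coset over $\spank c$, then $x$ in fact satisfies one of finitely many specific $\End(\Alg)$-linear relations with $c$, the relations being governed by the $C_j^V$ and by the finitely many torsion cosets in $\Tor G_0$.

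Finally, I assemble $\Phi_\Theta(y)$ as the set of universal formulas
\[
\varphi_{k,V}(y) \;:=\; \forall x\,\bigl[\,G(x_1)\wedge\cdots\wedge G(x_k)\wedge V(x,y)\ \to\ \psi_{k,V}(x,y)\,\bigr],
\]
one for each $k\geq 1$ and each $L_{\Alg}$-definable irreducible $V\subset \Alg^{k+l}$ such that $\Theta\vdash \dim V_y < k/2$, where $\psi_{k,V}(x,y)$ is the quantifier-free disjunction listing the finitely many linear relations produced in the previous paragraph. Each $\varphi_{k,V}$ is a universal $L$-formula (the $G$-predicate and the subvariety conditions are quantifier-free). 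For the verification, one direction is immediate: if $\spank c$ is strong, then each $\varphi_{k,V}$ holds at $c$ by construction. For the converse, if $\A\vDash \Phi_\Theta(c)$, then for every relevant $V$, any $x\in G^k\cap V_c$ satisfies one of the listed linear relations with $c$ and hence is linearly dependent over $\spank c$; by the first paragraph, this means $\spank c$ is strong.

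The main obstacle is the middle step: carefully extracting from Weak CIT a \emph{finite, uniform in $c$} list of linear relations capturing all the ways a tuple $x\in G^k\cap V_c$ can be dependent over $\spank c$ when $\dim V_c<k/2$. This is where torsion of $G$ (whose fixedness $\Tor G=\Tor G_0$ has already been established) must be incorporated into the list of relations, which is exactly the generalisation of Poizat's Corollaire~3.4 that this lemma is making.
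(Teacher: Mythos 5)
Your outer architecture matches the paper's: reduce strongness to green, $\End(\Alg)$-linearly independent tuples via Remark~\ref{ss-checkgreen}, index universal formulas by $k_0$-varieties $V(x,y)$ with $\dim V_c < \frac{k}{2}$, and make the conclusion of each formula a finite disjunction of explicit linear relations between $x$ and $y$. But the step you yourself flag as ``the main obstacle'' is exactly the content of the lemma, and your sketch of it does not go through. Weak CIT only produces finitely many \emph{subgroups} $C_1,\dots,C_s$; the cosets $\alpha'+C_i$ containing the atypical components have arbitrary representatives $\alpha'$, so Weak CIT by itself cannot convert ``every $x\in G^k\cap V_c$ is linearly dependent over $\spank c$'' into finitely many relations of the form $N\cdot y + B(n(M\cdot x))$ uniformly in $c$. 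Your proposed supplement, ``the finitely many torsion cosets in $\Tor G_0$'', is not available either: $\Tor G_0=\Tor G$ is in general infinite (divisibility of $G$ forces all $p$-power torsion once a single $p$-torsion point is green), and in the torsion-free case the whole difficulty disappears and one is back at Poizat's Corollaire~3.4, which is precisely what this lemma is meant to go beyond.

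What the paper does at this point, and what is absent from your proposal, is an application of the Mordell--Lang property (Theorem~\ref{m-l}). In the converse direction one first uses strongness of $\spank c$ to see that the coset parameter $\beta^*=M^i\cdot b$, being green and algebraic over $c$, must lie in the finite-rank group $(\spank c)^{n_i}$; Mordell--Lang applied to the Zariski closure $W_i$ of $M^i\cdot W_{c'}$ and to the finite-rank group $(\spank c')^{n_i}$ then yields finitely many cosets $\gamma_{ij}+B_{ij}$ with $n_{ij}\gamma_{ij}=N^{ij}\cdot c'$, and these are exactly the finitely many relations written into the formula. Even then one must ensure the $B_{ij}$ are \emph{proper} subgroups, since otherwise the listed ``relation'' is vacuous and witnesses no linear dependence of $x$ over $\spank c$; the paper arranges this by excluding the proper subvariety $W^*$ of non-generic fibres of $x\mapsto M^i\cdot x$ from the hypothesis of the formula and by a dimension computation combining the atypicality of the component with its typicality relative to $\alpha^*+C_i$, which shows $\dim W_i<n_i$. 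Your proposal contains neither the Mordell--Lang input nor the $W^*$ device, so it has a genuine gap at exactly the point where the torsion case requires a new idea.
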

\begin{proof}
For any type $\Theta$ with no realisations consisting purely of green points the statement is trivial, thus assume we have a realisation $c' \in G^l$ of $\Theta(y)$. Let $\Phi_\Theta(y)$ be the partial type consisting of the following formulas:

For each $n \geq 1$ and each subvariety $W(x,y)$ of $A^{n+l}$ defined over $k_0$ such that $W_{c'}$ is irreducible over $k_0(c')$ and has dimension $< \frac{n}{2}$, the formula
\begin{multline*}
\forall x \Big( \big( W(x,y) \land \bigwedge_{1 \leq j \leq n} G(x_j) \land \neg W^*(x,y) \big)\\ 
\rightarrow \bigvee_{1\leq i \leq s} \bigvee_{\substack{1 \leq j\leq r_i\\ B_{ij} \text{ proper}}} N^{ij}\cdot y + B_{ij}(n_{ij}(M^i\cdot x)) 
\Big),
\end{multline*}
where: 
\begin{itemize}
\item $s$, $C_1,\dots,C_s$ are as provided by Theorem~\ref{weakcit} (Weak CIT) for the subvariety $W_{c'}$ of $A^n$, and each $C_i$ is defined by the system of equations $M^i \cdot x = 0$, $M^i \in \Mat_{n_i \times n}(\End(\Alg))$ of rank $n_i$;
\item for each $i \in \{1,\dots, s\}$; $r_i$, $\gamma'_{i1},\dots,\gamma'_{ir_i}$, $B_{i1},\dots,B_{ir_i}$ are as provided by Theorem~\ref{m-l} (Mordell-Lang property) for the variety $W_i$, which we define to be the $k_0(c')$-Zariski closure of $M^i \cdot W_{c'}$, and the finite rank subgroup $(\spank c')^{n_i}$ of $A^{n_i}$; and $N^{i1},\dots,N^{ir} \in \Mat_{n_i\times l}(\End(\Alg))$, $n_{i1},\dots, n_{ir_i} \in \N$ are such that $n_{ij} \gamma'_{ij} = N^{ij} \cdot c'$;
\item $W^*(x,y) := \bigcup_{i=1}^s W^{*i}(x,y)$ and, for each $i=1,\dots, s$, we define $W^{*i}(x,y)$ to be a variety such that $W^{*i}(x,c')$ is the $k_0(c')$-Zariski closure of the set
\[
\{ x \in W_{c'} : \dim W_{c'} \cap x + C_i > \dim W_{c'} - \dim W_i \}.
\]
Note that the above set is the union of the non-generic (i.e. not of minimal dimension) fibres inside $W_{c'}$ for the map given by $x \mapsto x^{M^i}$. By a standard fact, this set is contained in a proper closed subset of $W_{c'}$. Therefore $W^*_{c'} \subsetneq W_{c'}$. % ADDME Reference
\end{itemize}

%%%%%%%%
Let $c$ be any realisation of $\Theta$ in $\A$ with $c \subset G$. Note that since $\Theta$ is a complete $L_\Alg$-type, $c$ and $c'$ are conjugates by an automorphism of the $L_{\Alg}$-structure $A$.

Suppose $\A \models \Phi_\Theta(c)$. To see that $\spank c$ is then strong in $\A$, suppose towards a contradiction that there exists $b \in A^n$ such that $\delta(b/\spank c) < 0$. It is easy to see that we may assume $b$ to be in $G^n$ and linearly independent over $\spank c$. Let $W_c := W(x,c)$ be the algebraic locus of $b$ over $k_0(c)$. Then, since $\delta(b/\spank c) < 0$, we have $\dim W_c < \frac{n}{2}$. Hence also $W_{c'}$ is irreducible over $k_0(c')$ and $\dim W_{c'} <\frac{n}{2}$. Thus, there is a formula in $\Phi_\Theta(y)$ corresponding to $W(x,y)$. If the disjunction in the formula is non-empty then we get a linear dependence on $b$ over $\spank c$, hence a contradiction. If the disjunction is empty, then the fact that the formula is satisfied by $c$ means that the set $(W_c \setminus W^*_c) \cap G^n$ is empty; but our $b$ is in this set ($b$ is not in $W^*_c$ because it is generic and, as noted earlier, $W^*_c$ is contained in a proper closed subset of $W_c$), hence also a contradiction. This proves that $\spank c$ is then strong in $\A$.

Conversely, assume $\spank c$ is strong in $A$ and let us see that $\A \models \Phi_\Theta(c)$.
Let $n \geq 1$ and let $W(x,y)$ be a subvariety of $A^{n+l}$ over $k_0$ such that $W(x,c)$ is irreducible over $k_0(c)$ and of dimension $< \frac{n}{2}$ and suppose $b$ is an element of the set $(W_c \setminus W^*_c) \cap G^n$.

Since $\trd(b/c) \leq \dim W < \frac{n}{2}$ and by assumption $\delta(b/\spank c) \geq 0$, the tuple $b$ must be linearly dependent over $\spank c$. Thus, let $\alpha + C$ be a coset of a proper algebraic subgroup of $A^n$ containing $b$ of dimension $\ld(b/\spank c)$, $\alpha \in \spank c$.

Let $S$ be an irreducible component of $W_c \cap \alpha + C$ containing $b$. Then $S$ is an atypical of the intersection of $W_c$ and $\alpha + C$: to see this note that, on the one hand, $S$ is defined over $k_0(c)\alg$ and so $\dim S \geq \trd(b/c) \geq \frac{1}{2} \ld(b/\spank c) = \frac{1}{2} \dim C$ and, on the other hand, since $\dim W_c <\frac{n}{2}$, we have $\dim W_c + \dim (\alpha + C) - n < \dim C - \frac{n}{2} < \frac{1}{2} \dim C$.

Applying an automorphism $\sigma \in \Aut(A)$ with $\sigma(c) = c'$, we have that $\sigma(S)$ is an atypical component of the intersection of $W_{c'}$ and $\sigma(\alpha) + C$. Therefore there exists $i \in \{1,\dots,s\}$ such that $\sigma(S)$ is contained in a coset $\sigma(\alpha^*) + C_i$ and $\sigma(S)$ is a typical component of the intersection of $W_{c'}$ and $\sigma(\alpha) + C$ with respect to $\sigma(\alpha^*) + C_i$. Applying $\sigma^{-1}$, we get that $S$ is contained in $\alpha^* + C_i$ and $S$ is a typical component of the intersection of $W_c$ and $\alpha + C$ with respect to $\alpha^* + C_i$.

Since $S$ is defined over $k_0(c)\alg$, it has $k_0(c)\alg$-rational points. Hence we may assume $\alpha^* \in (k_0(c)\alg)^n$. Let us now look at the coefficients of the equations defining the coset $\alpha^* + C_i$, namely $\beta^* := M^i \cdot \alpha^* \in (k_0(c)\alg)^{n_i}$. Also, $\beta^* = M^i \cdot b \in G^{n_i}$. Thus, since $\spank c$ is strong, $\beta^* \in (\spank c)^{n_i}$. So, $M^i \cdot b \in W_i \cap (\spank c)^{n_i}$ and, applying appropriate automorphisms as before, we have $W_i \cap (\spank c)^{n_i} = \bigcup_{j=1}^{r_i} \gamma_{ij} + (B_{ij} \cap (\spank c)^{n_i})$, where $\gamma_{ij} \in \spank c$ satisfies $n_{ij} \gamma_{ij} = N^{ij} \cdot c$. Therefore we can find $j \in \{1,\dots,r_i\}$ such that $M^i \cdot b \in \gamma_{ij} + B_{ij}$, and hence $n_{ij}(M^i\cdot b) \in (N^{ij}\cdot c) + B_{ij}$.

It now suffices to show that $B_{ij}$ is a \emph{proper} algebraic subgroup of $A^n$. This follows from the fact that $W_i$ is a proper subvariety of $A^{n_i}$, as the following dimension calculations show: first, from the atypicality of $S$ we have
\[
\dim S > \dim W_c + \dim C - n.
\]
Also, from the typicality of $S$ with respect to $\alpha^* + C_i$ we have
\[
\dim S = \dim W_c \cap (\alpha^* + C_i) + \dim (\alpha + C) \cap (\alpha^* + C_i) - \dim (\alpha^* + C_i).
\]
Combining the last two expressions we get
\[
\dim W_c + \dim C - n < \dim W_c \cap (\alpha^* + C_i) + \dim (\alpha + C) \cap (\alpha^* + C_i) - \dim (\alpha^* + C_i).
\]
Reorganising terms and noting that $\alpha + C = b + C$ and $\alpha^* + C_i = b + C_i$,
\begin{align*}
&\dim W_c - \dim W_c \cap (b + C_i)\\ <\ &n - \dim (b + C_i) + \dim (b + C) \cap (b + C_i) - \dim (b + C)\\
                                       \leq\  &n - \dim (b + C_i)\\
                                       =\ &n_i.
\end{align*}
Since $b$ is not in $W^*_c$, we know $\dim W_i = \dim W_c - \dim W_c \cap (b + C_i)$. Therefore $\dim W_i <n_i$. 
\end{proof}

\begin{Rem}
If one works under the assumption that the group $G$ is torsion-free, then a simpler argument, using the Weak CIT but not the Mordell-Lang property, suffices to prove the above lemma. Indeed, this is the well-known argument of Poizat in Corollaire~3.4 of \cite{PzEq3}.

In \cite{PzEq3}, it is noted that in the more general situation, where the torsion of $G$ is not necessarily trivial, the statement holds if one assumes the CIT. Without the extra assumption, however, the question of how to get the result was left open. The above proof answers this question.

Let us remark some limitations of the above lemma, in comparison with the argument in \cite{PzEq3}, which applies to the torsion-free case. One limitation is that we had to restrict to tuples $c$ with coordinates in $G$, which is not necessary there. But also, the result there is more uniform, since for each $l$, it gives a type $\Phi_l(y)$ that works for all $l$-tuples $b$, independently of their algebraic types. This difference is due to the fact that the Weak CIT is uniform in families, but the same kind of uniformity is not available for the Mordell-Lang property.
% COMMENT Try to improve this.
\end{Rem}
%
% DEPRECATED
% \subsection{EXPERIMENTAL: More Uniform Mordell-Lang?}
% 
% 
% \begin{Prop}[Mordell-Lang for families] \label{m-l-fam}
% Let $B$ be a complex semiabelian variety. Let $k \geq 0$ and let $V(x,y)$ be a family of subvarieties of $B$ parametrized by $B^k$. Then there exist a natural number $r$, tuples $m^1,\dots,m^r \in \Z^{k}$, positive integers $n_1,\dots, n_r$, and algebraic subgroups $B_1(x),\dots, B_r(x)$ of $B$ such that for every $c \in B^k$ there exist $\gamma_1,\dots,\gamma_r$ with $n_i \gamma_i = m^i \cdot c$ such that $\gamma_i + B_i(x) \subset V(x,c)$ and
% \[
% V(x,c) \cap \langle c \rangle_{\Q} = \bigcup_{i=1}^r \gamma_i + (B_i(x) \cap \langle c \rangle_{\Q}).
% \]
% \end{Prop}
% \begin{proof}
% As in the last paragraph of the proof of Theorem 1 of Zilber's LMS paper?
% %Note that the proof there shows that we can have parameters and families in the CIT, may want to check that the same works for the WCIT.
% \end{proof}

%%

\begin{Lemm} \label{lemm-ss-def2}
There exists an $L_{X_0}$-theory $T^0$ such that for every $L_{X_0}$-structure $\A = (A,G)$,
$\A \models T^0$ if and only if $\A$ is in $\CC_0$.
\end{Lemm}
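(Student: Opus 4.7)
The plan is to let $T^0$ be the union of three groups of $L_{X_0}$-axioms that together capture the three conditions defining $\CC_0$: that the $L$-reduct lie in $\CC$, that the constants name a substructure isomorphic to $\X_0$, and that this substructure be strong in $\A$.

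For the first group, include $T_{\Alg}$ together with first-order $L$-sentences asserting that $G$ is a subgroup of $A$ closed under every $\phi \in \End(\Alg)$ and divisible by every $n \geq 1$. These axioms precisely axiomatize $\CC$. For the second group, include the atomic $L$-diagram of $\X_0$ with respect to the constants of $L_{X_0}$. Quantifier elimination for $T_{\Alg}$ (Fact~\ref{fact-biin}) together with the fact that $X_0$ is $\cl_0$-closed guarantee that, in any model, these axioms force the interpretations of the constants to form an $L$-isomorphic and $\cl_0$-closed copy of $\X_0$. For the third group, use the standing assumption to fix a $\cl_0$-basis $c' = (c_1, \ldots, c_l)$ of $X_0$ consisting of green points, let $\Theta(y)$ be the complete $L_{\Alg}$-type of $c'$ over $k_0$ (which is pinned down by the atomic diagram), apply Lemma~\ref{lemm-ss-def} to obtain the partial universal type $\Phi_\Theta(y)$, and include each formula $\phi(c')$ for $\phi \in \Phi_\Theta$.

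For the verification, the forward direction is essentially by definition: if $\A \in \CC_0$ then $\A \in \CC$, the named substructure is $\X_0$, and by Lemma~\ref{lemm-ss-def} applied to the green basis $c' \subset G$, strongness of $X_0 = \spank(c')$ in $\A$ yields $\A \models \Phi_\Theta(c')$. For the converse, any model of $T^0$ lies in $\CC$, the constants name an $L$-isomorphic and $\cl_0$-closed copy of $\X_0$, and Lemma~\ref{lemm-ss-def} translates the third group of axioms back into strongness of $\spank(c'^{\A}) = X_0^{\A}$, placing $\A$ in $\CC_0$. The only subtle point is to observe that strongness of the entire $\cl_0$-closed set $X_0$ coincides with strongness of the green basis $c'$, because $X_0 = \spank(c')$; all the heavy algebro-geometric work has already been absorbed into Lemma~\ref{lemm-ss-def}, so this final step is simply bookkeeping rather than a real obstacle.
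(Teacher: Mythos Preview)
Your proposal is correct and follows essentially the same route as the paper's own proof: axiomatize $\CC$, pin down $\X_0$ via its quantifier-free diagram, and use Lemma~\ref{lemm-ss-def} applied to a green $\cl_0$-basis $c'$ of $X_0$ to express strongness. The paper is slightly terser, but the decomposition into the three groups of axioms and the key appeal to Lemma~\ref{lemm-ss-def} are identical.
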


\begin{proof}
It suffices to show that the following conditions on a structure $(A,G)$ can be expressed by a set of $L_{X_0}$-sentences.
\begin{enumerate}
\item\label{T0-1} $A$ is a model of $T_{\Alg}$,
\item\label{T0-2} $G$ is a divisible subgroup of $A$,
\item\label{T0-3} $\qftp^{\A}(X_0) = \qftp^{\X_0}(X_0)$,
\item\label{T0-5} $X_0$ is strong in $\A$,
\end{enumerate}

It is clear that we can find a set of $L_{X_0}$-sentences $\Sigma$ expressing conditions \ref{T0-1}, \ref{T0-2}, \ref{T0-3}.

Let $c^0$ be a $k_\Alg$-linear basis of $X_0$ consisting of green points, let $\Theta = \qftp_{L_{\Alg}}(c^0)$. By Lemma~\ref{lemm-ss-def}, the set of $L_{X_0}$-sentences $\Phi_\Theta(c^0)$ expresses \ref{T0-5} modulo $\Sigma$. Thus, $T^0 := \Sigma \cup \Phi_\Theta(c^0)$ is as required. 
\end{proof}

% ASSUMPTION
Henceforth let $T^0$ denote the theory found in the proof of the above lemma.

%%%%%%%%%%%%%%%%%%%%%
\subsection{The theory of the rich structures}

\subsubsection{Rotund varieties}

The rest of Section~\ref{sec:theory} is dedicated to finding a theory whose $\omega$-saturated models are precisely the rich structures in $\CC_0$. We then show that the theory is the complete theory of every rich structure in $\CC_0$.

We start by defining \emph{rotund varieties}, which serve as the main technical tool in finding the required theory.

% ASSUMPTION
Let $A = \Alg(K)$ be a model of $T_{\Alg}$.

\begin{Def} \label{def-rotund}
An irreducible subvariety $W$ of $A^n$ is said to be \emph{rotund} if for every $k \times n$-matrix $M$ with entries in $\End(\Alg)$ of rank $k$, the dimension of the constructible set $M \cdot W$ is at least $\frac{k}{2}$.
\footnote{Recall that the dimension of a constructible subset of $A^n$ is, by definition, the dimension of its Zariski closure.}
\end{Def}

\begin{Rem}
For any subvariety $W$ of $A^n$ and any $C\subset A$ such that $W$ is defined over $k_0(C)$, if $b$ is a generic point of $W$ over $k_0(C)$, then: $W$ is rotund if and only if for every $k \times n$-matrix $M$ with entries in $\End(\Alg)$ of rank $k$,
\[
\trd(M \cdot b/C) \geq \frac{k}{2}.
\]
\end{Rem}

\begin{Rem}
If $W$ is a rotund subvariety of $A^n$, then, in particular, for every non-zero $m \in \End({\Alg})^n$, $\dim m \cdot W \geq 1$. This implies that $W$ is not contained in any coset of a proper algebraic subgroup of $A^n$. To refer to this property, we say that $W$ is \emph{free (of linear dependences)}.
\end{Rem}

\begin{Rem} \label{isrotund}
Let us note that rotund varieties correspond to strong extensions of structures in $\Sub \CC_0$ as follows:

Consider a structure $\X \in \Sub\CC_0$, with $X \subset \bar A \models T_{\Alg}$. Let $W$ be an irreducible subvariety of $\bar A^n$ defined over $X$ and let $b$ be a generic point of $W$ over $k_0(X)$ in $\bar A$. Let $Y$ be the substructure of $\bar A$ with domain $Y = X + \spank b$ and set $G(Y):= G(X) + \langle b^i : i \geq 1 \rangle$, where $b^i$ is a sequence of tuples in $(\spank b)^n$ with $b^1 = b$ and for all $i,j \geq 1$, $j b^{ij} = b^i$. Then $\Y = (Y,G(Y))$ is a structure in $\Sub\CC$ extending $\X$. 

Moreover, for every $k \times n$-matrix $M$ of rank $k$ with entries in $\End(\Alg)$, we have
\[
2 \dim (M \cdot W) - k = 2 \trd(M \cdot b/k_0(X)) - \ld(M \cdot b) = \delta(M \cdot b/X).
\]
Hence, if $\Y$ is a strong extension of $\X$, then the above value is always non-negative, and hence $W$ is a rotund variety.

Conversely, assume $W$ is a rotund variety and let us see that then $\Y$ is a strong extension of $\X$. Indeed, for every tuple $b' \subset Y$, there exists a $k \times n$-matrix $M$ such that $\spank(b'X) = \spank((M \cdot b)X)$; therefore $\delta(b'/X) = \delta(M \cdot b/X) = 2 \dim (M \cdot W) - k$ and, by the rotundity of $W$, this value is non-negative.
\end{Rem}

% COMMENT Comment on defect of the correspondence, and example of Hils?

\noindent\textbf{Examples of rotund varieties.}
We shall now give some examples of rotund varieties. Besides illustrating the notion, they will be useful for some of our later arguments. For the first two families of examples let us consider the case where $\Alg$ is the multiplicative group. We thus write the group operation on $A = K^*$ multiplicatively.

%%% FIRST EXAMPLE X+Y=c
\noindent\textit{First example: ``$X+Y = c$''.} For any $c \in A = K^*$, the subvariety of $(K^*)^2$ defined by the equation $x+y = c$, where $+$ denotes addition in $K$, is rotund. This follows from transcendence degree calculations.
\footnote{ Explanation: Note that it is sufficient to show that the variety is free. Let $b = (b_1,b_2)$ be a generic point of the variety defined by $x+y=c$ over $\Q(c)\alg$. Note that $b_1$ is transcendental over $\Q(c)\alg$ and $b_2 = c -b_1$.

Let $m = (m_1,m_2) \in \Z^2$ be non-zero. Suppose towards a contradiction that $b^m = c'$ for some $c'$ in $\Q(c)\alg$. Then $c' = b_1^{m_1} b_2^{m_2} = b_1^{m_1} (c-b_1)^{m_2}$. Since $b_1$ is transcendental over $c$, we see that $m_2 = -m_1$. Thus, $\frac{1}{c'} = \frac{(c-b_1)^{m_1}}{b_1^{m_1}} = (\frac{c}{b_1} - 1)^{m_1}$. But this contradicts that $b_1$ is transcendental over $c$. 
}

%%% SECOND EXAMPLE Generic hyperplanes
\noindent\textit{Second example: Generic hyperplanes.} A \emph{hyperplane} in $K^n$ is a variety defined by an equation of the form $c \cdot x = d$ for some non-zero $c \in K^n$ and some $d \in K$. If $C$ is a subset of $K$ and $c \in K^n$ is such that $\trd(c/C) = n$ then the hyperplane defined by the equation $c \cdot x = 1$ is said to be a \emph{generic hyperplane over $C$}.

Let $H_{n,k}(x,y^1,\dots,y^k)$ be the subvariety of $K^{n+kn}$ defined by the system of equations $M \cdot x = 1$, where $M$ is the $k \times n$-matrix with rows $y^1,\dots,y^k$ and $1$ denotes the tuple in $K^k$ whose entries are all equal to $1$. 

A variety of the form $H_{n,k}(x,c^1,\dots,c^k)$ for some $c^1,\dots,c^k \in K^n$ is the intersection of $k$ hyperplanes. If $C$ is a subset of $K$ and $\trd(c^1,\dots,c^k/C) = nk$ then we say that $H_{n,k}(x,c^1,\dots,c^k)$ is the \emph{intersection of $k$ (independent) generic hyperplanes over $C$}.
%%%%%%%

The following lemma follows Lemme 3.1 in \cite{PzEq3} and Lemma 5.2 in \cite{ZBicol}.

\begin{Lemm}
If $V \subset (K^*)^n$ is a rotund variety defined over $C \subset K$ of dimension $d$ with $d-1 \geq \frac{n}{2}$ and $H_{n,1}(x,c)$ is a generic hyperplane over $C$, then $V \cap H_{n,1}(x,c)$ is a rotund variety of dimension $d-1$. 
\end{Lemm}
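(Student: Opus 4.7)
The plan is to first establish that $W := V \cap H_{n,1}(x,c)$ is irreducible of dimension $d-1$ and that a generic point $b$ of $W$ over $C(c)$ is still generic in $V$ over $C$, and then to deduce rotundity of $W$ by a case analysis on whether the fibre of the $M$-power map on $V$ above $u := b^M$ happens to lie inside the generic hyperplane.

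For the first step, since $V$ is defined over $C$ and $c$ is generic over $C$, one has $V \not\subseteq H_{n,1}(x,c)$, so $W$ is a proper subvariety of $V$; the hypothesis $d - 1 \geq n/2$ combined with $d \leq n$ forces $n \geq 2$ and $d \geq 2$, and a standard Bertini-type argument for generic hyperplane sections then yields that $W$ is irreducible of dimension $d - 1$. Taking $b$ generic in $W$ over $C(c)$, the identity $\trd(b,c/C) = n + d - 1$ combined with the observation that the single equation $c \cdot b = 1$ gives $\trd(c/C(b)) \leq n - 1$ forces $\trd(b/C) \geq d$, hence equality, so $b$ is in fact generic in $V$ over $C$.

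For rotundity, fix a $k \times n$ integer matrix $M$ of rank $k$ and set $u := b^M$ and $t := \trd(u/C)$; rotundity of $V$ then gives $t \geq k/2$, so the target is $\trd(u/C(c)) \geq k/2$. Let $V_u$ be the irreducible component of $V \cap \{x^M = u\}$ containing $b$, which has dimension $d - t$; then $b$ is generic in $V_u \cap \{c \cdot x = 1\}$ over $C(u,c)$, so this variety has dimension $d - t - 1$ or $d - t$ according to whether $V_u \not\subseteq \{c \cdot x = 1\}$ or $V_u \subseteq \{c \cdot x = 1\}$. Computing $\trd(u,b,c/C) = n + d - 1$ in two ways gives
\[
\trd(u/C(c)) \;=\; \begin{cases} t, & \text{if } V_u \not\subseteq \{c \cdot x = 1\}, \\ t - 1, & \text{if } V_u \subseteq \{c \cdot x = 1\}, \end{cases}
\]
and, in the second case, also $\trd(c/C(u)) = n - 1$.

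The first (``generic'') case closes immediately. The main obstacle is the second (``degenerate'') case, which I would handle by showing that $V_u$ must reduce to a single point: if the affine span of $V_u$ had dimension $e \geq d - t$, the hyperplanes $\{c' \cdot x = 1\}$ containing $V_u$ would form an $(n - e - 1)$-dimensional family defined over $C(u)$, giving $\trd(c/C(u)) \leq n - e - 1$; comparing with $\trd(c/C(u)) = n - 1$ forces $e = 0$, whence $t = d$ and $\trd(u/C(c)) = d - 1 \geq n/2 \geq k/2$ by the hypothesis. This last step is the one that genuinely uses both the genericity of $c$ (to make the affine span of $V_u$ incompatible with any non-trivial constraint on $c$) and the strict inequality $d - 1 \geq n/2$ (rather than merely $d \geq n/2$) to absorb the loss of one in the transcendence degree.
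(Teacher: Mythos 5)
Your proof is correct, but it reaches the conclusion by a genuinely different route than the paper. For irreducibility the paper does not cite Bertini: it shows every component of $V\cap H_c$ has dimension $d-1$ and then gets a single component by a transcendence-degree computation showing that generic points $a^1,a^2$ of any two (possibly equal) components, together with $c$, always realise the same type over $C$; your appeal to the Bertini irreducibility theorem is legitimate in characteristic zero since $d\geq 2$, at the cost of an external citation and of the small reduction from affine hyperplanes $c\cdot x=1$ met inside the torus to the projective statement (in particular that the generic section is not contained in the coordinate boundary, which also gives nonemptiness --- a point the paper leaves implicit as well). For rotundity the paper argues by contradiction through one Claim: if $b=a^M$ satisfies $\trd(b/Cc)<\trd(b/C)$, then the locus of $c$ over $\Q(Cb)\alg$ is forced to equal the hyperplane $H_a$, whence $a\subset\Q(Cb)\alg$ and $\trd(b/Cc)=\trd(a/Cc)=d-1\geq\frac{n}{2}\geq\frac{k}{2}$. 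You instead verify the inequality directly for each $M$ by passing to the fibre $V_u$ through the generic point and splitting on whether $V_u\subset H_c$; in the degenerate case your count of the hyperplanes $c'\cdot x=1$ containing the affine span of $V_u$, played against $\trd(c/Cu)\geq n-1$, forces $V_u$ to be a single point, so $\trd(u/Cc)=d-1\geq\frac{n}{2}\geq\frac{k}{2}$. The two degenerate-case mechanisms are parallel --- both turn the drop in transcendence degree into the statement that the generic $c$ lies in a too-small family defined over algebraic data, and both consume $d-1\geq\frac{n}{2}$ --- but yours needs the fibre-dimension theorem (to get $\dim V_u=d-t$, used in the typical case), while the paper's Claim recovers $a$ itself and is applied uniformly to all $M$. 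One harmless imprecision: where you display them, the two equalities for $\trd(u/Cc)$ and the equality $\trd(c/C(u))=n-1$ are at that stage only justified as the inequalities $\geq t$, $\geq t-1$ and $\geq n-1$, which is all your argument uses (the equalities follow a posteriori).
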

\begin{proof}
Let $H_c$ denote the hyperplane $H_{n,1}(x,c)$.

Let us first show that all irreducible components of $V \cap H_c$ have dimension $d-1$: Since $V$ is defined over $C$, $V$ has rational points in $\Q(C)\alg$ and such points cannot be in $H_c$, for $c$ is assumed to be algebraically independent over $C$, hence $V \not\subset H_c$. Therefore $V \cap H_c$ is a proper subvariety of the irreducible variety $V$. Thus, $\dim V \cap H_c < \dim V = d$. But also, by the smoothness of $(K^*)^n$ and the dimension of intersection inequality, the dimension of every irreducible component of $V \cap H_c$ is at least $\dim V + \dim H_c - n = d + (n-1) - n = d-1$. Thus, every component has dimension $d-1$.

Let us now see that $V \cap H_c$ is in fact irreducible: Let $V_1$ and $V_2$ be irreducible components of $V \cap H_c$ (not necessarily distinct). Let $a^1$ be a generic point of $V_1$ over $Cc$ and let $a^2$ be a generic point of $V_2$ over $Cca^1$. Note $\trd(a^1/Cc) = \trd(a^2/Cca^1) = d-1$. Using the additivity of the transcendence degree, we therefore obtain $\trd(a^1 a^2 c/C) = n +2d -2$. By the independence of $a^1$ and $a^2$ over $C$, $\dim H_{a^1} \cap H_{a^2} = n-2$; hence $\trd(c/C a^1 a^2) \leq n-2$. Using again the additivity we get $\trd(a^1 a^2/C) \geq 2d$. It easily follows that, in fact, $\trd(a^1 a^2/C) = 2d$ and $\trd(c/C a^1 a^2) = n-2$. Thus, $a^1$ and $a^2$ are independent generic points of the irreducible variety $V$ over $C$ and $c$ is a generic point of the irreducible variety $H_{a^1} \cap H_{a^2}$ over $C a^1 a^2$. 
%(note that indeed $H_{a^1} \cap H_{a^2}$ is irreducible, by the independence of $a^1a^2$ over $C$) 
This means that independently of the choice of the components $V_1$ and $V_2$ the type of $a^1 a^2 c$ over $C$ is always the same. Then $V \cap H_c$ must have only one irreducible component, as otherwise we get different types by choosing $V_1 = V_2$ and $V_1 \neq V_2$.

We now turn to showing that $V \cap H_c$ is rotund. Let $a$ be a generic point of $V \cap H_c$ over $\Q(Cc)\alg$. Note that $a$ is then also a generic point of $V$ over $\Q(C)\alg$ and $\trd(c/Ca) = n-1$.

\noindent\textbf{Claim:} Let $b \subset \Q(Ca)\alg$. If $\trd(b/Cc) < \trd(b/C)$, then $a \subset \Q(b)\alg$.

\noindent\textbf{Proof of claim:} Assume $\trd(b/Cc) < \trd(b/C)$. Let $U$ be the locus of $c$ over $\Q(Cb)\alg$. From our assumption, using the exchange principle, we have  $\trd(c/Cb) < \trd(c/C)$. Thus, $\dim U = \trd(c/Cb) < \trd(c/C) = n$.

The hyperplane $H_a$ must be contained in $U$; otherwise, $\dim U \cap H_a < \dim U < n$, contradicting the fact that $\trd(c/Ca) = n-1$. Since $\dim U \leq n-1 = \dim H_a$, we conclude that $U = H_a$.

Since $U$ is defined over $\Q(Cb)\alg$, we can find $b' \in U$ with coordinates in $\Q(Cb)\alg$. Then $a$ is uniquely determined by the conditions (1) $b' \cdot a = 1$ and (2) for all $z \in H_a$, $(z-b')\cdot a = 0$. Thus, $a \subset \Q(Cb)\alg$.

Applying the claim: Let $M$ be a $k \times n$-matrix with integer entries of rank $k$. Let $b = a^M  \in (K^*)^k$. Suppose towards a contradiction that $\trd(b/Cc)< \frac{k}{2}$. By the rotundity of $V$, $\trd(b/C) \geq \frac{k}{2}$. Hence $\trd(b/Cc) < \trd(b/C)$. Therefore, by the claim, $a \subset \Q(Cb)\alg$. Thus, $\trd(b/Cc) = \trd(a/Cc) = d-1 \geq \frac{n}{2} \geq \frac{k}{2}$. A contradiction. This shows that $V\cap H_c$ is rotund.
\end{proof}

%%%%%%%
\begin{Rem} \label{rem-generic-hyper}
It follows from the above lemma, by induction, that if $V \subset (K^*)^n$ is a rotund variety defined over $C \subset K$ of dimension $d$ and $H_{n,k}(x,c^1,\dots,c^k)$ is the intersection of $k$ generic hyperplanes over $C$ with $d-k \geq \frac{n}{2}$, then $V \cap H_{n,k}(x,c^1,\dots,c^k)$ is a rotund variety of dimension $d-k$. 

In particular, the subvariety $H_{n,k}(x,c^1,\dots,c^k)$ of $(K^*)^n$ defined by the intersection of $k$ generic hyperplanes with $k \leq \frac{n}{2}$ is rotund.
\end{Rem}
%%%%%%%%%%%%%

\noindent\textit{Third example: Generic hyperplanes in the elliptic curve case.} 
Let us now consider analogues of the rotund varieties in the previous example in the case where $\Alg$ is an elliptic curve.

As mentioned before, we consider $A = \Alg(K)$ as a subvariety of $\PP^2(K)$ whose affine part is defined by an equation $y^2 = 4 x^3 +\alpha x + \beta$, with $\alpha, \beta \in k_0$.

The above arguments about generic hyperplanes in the multiplicative group case can be easily adapted to show the following: Let $V$ be a rotund subvariety of $A^n$ defined over $C$ of dimension $d$ with $d-1 \geq \frac{n}{2}$. If $H$ is a hyperplane in $K^{2n}$, generic over $C$, then the intersection of $V$ and the Zariski closure of $H$ in $(\PP^2)^n$ is a rotund subvariety of $A^n$ of dimension $d-1$. 

Also, if $H$ is the intersection of $k$ hyperplanes in $K^{2n}$, generic over $C$, with $d-k \geq \frac{n}{2}$, then the intersection of $V$ and the Zariski closure of $H$ in $(\PP^2)^n$ is a rotund subvariety of $A^n$ of dimension $d-k$. In particular, the intersection of $A^n$ and the Zariski closure in $(\PP^2)^n$ of the intersection of $k$ generic hyperplanes in $K^{2n}$, where $k \leq \frac{n}{2}$, is a rotund subvariety of $A^n$ of dimension $n-k$.\\

\noindent\textbf{Definability of rotundity.}
\begin{Lemm} \label{lemm-rotundity}
For every subvariety $W(x,y)$ of $\Alg^{n+k}$ defined over $k_0$, there exists a quantifier-free $L_{\Alg}$-formula $\theta(y)$ such that for all $A\models T_{\Alg}$ and all $c \in A^k$,
\[
A \vDash \theta(c) \iff W(x,c)\text{ is rotund}.
\]
\end{Lemm}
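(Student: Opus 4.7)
The plan is to use quantifier elimination of $T_{\Alg}$, supplied by Fact~\ref{fact-biin}, so that it suffices to exhibit any $L_{\Alg}$-definable formula $\theta(y)$ defining rotundity of the fibres $W(x,c)$; the quantifier-free form will then follow automatically. Since rotundity is only meaningful for irreducible varieties, I would first partition the parameter space $\Alg^k$ according to the definable conditions ``$W(x,c)$ is irreducible of dimension $d$'' (for each integer $d\ge 0$) and handle each piece separately. On a piece with $d < n/2$ rotundity fails automatically (apply $M = \id$); on pieces with $d \geq n/2$ one has to work harder.

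For each fixed matrix $M \in \Mat_{k'\times n}(\End(\Alg))$ of rank $k'$, the condition $\dim M\cdot W(x,c) \geq k'/2$ is $L_{\Alg}$-definable in $c$ by the constructibility of image dimensions in families. Thus rotundity is a priori a countable intersection of such definable sets, one per matrix $M$, and the main task is to reduce to a finite intersection. To achieve this I would apply Theorem~\ref{weakcit} (Weak CIT) to the family $W(x,y) \subset \Alg^{n+k}$, viewed (after a trivial reparametrisation embedding $\Alg^k$ into a power of $\Alg^n$) as a family of subvarieties of $\Alg^n$. Weak CIT produces a finite list $C_1,\dots,C_s$ of proper algebraic subgroups of $\Alg^n$ such that every atypical component of any intersection $W(x,c)\cap(\alpha+C)$ lies in, and is typical with respect to, a coset of some $C_i$.

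Suppose $W(x,c)$ fails to be rotund with witness $M$ of rank $k'$. Then for a generic $b \in W(x,c)$ the irreducible component $S$ of $W(x,c)\cap(b+\ker M)$ through $b$ has dimension $d - \dim M\cdot W(x,c) > d - k'/2 \geq d - k'$, so $S$ is atypical in $\Alg^n$. Weak CIT places $S$ inside a coset $b + C_i$ for some $i$, with
\[
\dim S = \dim(W(x,c)\cap(b+C_i)) + \dim(\ker M \cap C_i) - (n - \codim C_i).
\]
Combining this with the bound $\dim S > d - k'/2$ and with the identity $\dim\pi_{C_i}(W(x,c)) = d - \dim(W(x,c)\cap(b+C_i))$ for generic $b$ yields, for each $i$, a concrete numerical failure inequality on $\dim\pi_{C_i}(W(x,c))$ in terms of $k'$, $\codim C_i$ and $\dim(\ker M\cap C_i)$. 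Since $k' \leq n$ and $\dim(\ker M\cap C_i)$ can take only finitely many integer values, the set of such inequalities (as $i$, $k'$ and this intersection dimension vary) is finite, and each resulting failure condition is $L_{\Alg}$-definable in $c$. I would take $\theta(y)$ to be the negation of the disjunction of these finitely many failure conditions (combined with the stratification by irreducibility and dimension of $W(x,c)$).

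The main obstacle is the bookkeeping in the third paragraph: one must verify via the typicality–atypicality relations that every possible witness $M$ to non-rotundity, across all ranks and all possible kernels, is genuinely detected by one of the finitely many $C_i$ supplied by Weak CIT, and conversely that the absence of all failure conditions forces rotundity for every $M$. Once this is settled, $\theta(y)$ is a Boolean combination of $L_{\Alg}$-definable conditions, and quantifier elimination for $T_{\Alg}$ delivers the required quantifier-free formula.
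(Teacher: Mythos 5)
Your proposal is correct and is essentially the paper's own proof: you apply Weak CIT to the family $W(x,y)$ to get the finitely many subgroups $C_i$, take $\theta$ to be irreducibility, $\dim W(x,c)\geq \frac n2$, and finitely many definable dimension conditions on the images of $W(x,c)$ under the matrices $M^i$ defining the $C_i$, and prove the hard direction via an atypical component through a generic point, typicality with respect to a coset of some $C_i$, and the generic-fibre dimension identity, exactly as the paper does. The bookkeeping you defer does close, and in fact collapses to the paper's formula: since $\dim(\ker M\cap C_i)\leq\min(n-k',\dim C_i)$, each of your failure inequalities implies $\dim M^i\cdot W(x,c)<\frac{n_i}{2}$ where $n_i=\codim C_i$, while rotundity gives $\dim M^i\cdot W(x,c)\geq\frac{n_i}{2}$, so your $\theta$ is equivalent to the paper's conjunction $\bigwedge_i \dim M^i\cdot W(x,c)\geq\frac{n_i}{2}$; the only real difference is that you keep $\dim(\ker M\cap C_i)$ explicit instead of first replacing $\ker M$ by the minimal subgroup whose coset contains $S$, which is a harmless (arguably cleaner) variant of the paper's final computation.
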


\begin{proof}
Let $W(x,y)$ be a subvariety of $\Alg^{n+k}$ defined over $k_0$. Let $C_1,\dots,C_s$ be proper algebraic subgroups of $A^n$ as provided by the Weak CIT (\ref{weakcit}) for the family of subvarieties of $A^n$ defined by $W(x,y)$. For each $i= 1.\dots,s$, let $M^i$ be an $n_i \times n$-matrix with entries in $\End(\Alg)$ of rank $n_i$ such that $C_i$ is defined by the system of equations $M^i \cdot x = 0$.

Let $\theta(y)$ be the conjunction of the following:
\begin{itemize}
\item a quantifier-free $L_{\Alg}$-formula $\theta_0(y)$ such that $\theta_0(c)$ holds if and only if the variety $W_c$ is irreducible and has dimension $\geq \frac{n}{2}$,
\item for each $i=1,\dots,s$, a quantifier-free formula $\theta_i(y)$ such that $\theta_i(c)$ holds if and only if the dimension of $M^i \cdot W_c$ is at least $\frac{n_i}{2}$.
\end{itemize}

The existence of the formulas $\theta_i(y)$, $i=0,\dots,s$ is given by the following facts: that the theory of algebraically closed fields of any given characteristic (in this case 0) has the \emph{definable multiplicity property} (Lemma 3 in \cite{HruFusion}), which transfers to the theory $T_{\Alg}$, since the bi-interpretation is rank preserving, to give that the irreducibility of $W_c$ is a definable property on $c$; the definability of Morley rank in strongly minimal theories (Corollary~5.6 in \cite{ZieBous}), which here corresponds to the definability of dimension. We also use that the theory $T_{\Alg}$ has quantifier elimination. 

It is clear that for all $c$, if $W(x,c)$ is rotund, then $\theta(c)$ holds. 

To prove the converse, suppose towards a contradiction that we have $c$ such that $\theta(c)$ holds but $W(x,c)$ is not rotund. We can then find a $k\times n$-matrix $M$ with entries in $\End(\Alg)$ of rank $k \geq 1$ such that $\dim M \cdot W(x,c) < \frac{k}{2}$. Let $C$ be the algebraic subgroup of $A^n$ defined by the equation $M \cdot x = 0$.

Let $b$ be a generic point of $W_c$ over $k_0(c)\alg$ and let $S$ be an irreducible component of $W_c \cap b + C$ containing $b$. 

Note that $\dim S = \dim W_c \cap b + C$. Indeed, we have
\[
\dim S \geq \trd(b/c (M \cdot b)) = \trd(b/c) - \trd(M \cdot b/c) = \dim W_c - \dim M \cdot W_c,
\]
% above trd is over k_0...
and, by the theorem on the dimension of fibres (\cite[I.6.3]{Shaf1}), $\dim W_c - \dim M \cdot W_c = \dim W_c \cap (b + C)$; hence $\dim S = \dim W_c \cap b + C$.

% COMMENT Interesting point, for what predimension functions do we get this same "In particular"?
Since $\dim M \cdot W_c < \frac{k}{2}$; in particular, $\dim M \cdot W_c < k$. Hence
\[
\dim S = \dim W_c - \dim M \cdot W_c > \dim W_c - k.
\]
Therefore
\[
\dim S > \dim W_c - k.
\]
But
\[
\dim W_c - k = \dim W_c + (n-k) - n = \dim W_c + \dim (b + C) - n.
\]
Thus, $\dim S > \dim W_c + \dim C -n$, i.e. $S$ is an atypical component of the intersection of $W_c$ and $b + C$. 

% MISTAKEN LITTLE ARGUMENT I THINK:
% Furthermore, note that $\dim S > 0$, 
% as otherwise we would have $\dim W_c = \dim M \cdot W_c$, 
% so $n = k$, and, in consequence, $\dim W_c < \frac{n}{2}$, 
% which contradicts our assumptions.

Thus, by \ref{weakcit} (Weak CIT), there exists $i \in \{1,\dots,s\}$ and $b' \in A^n$ such that $S$ is contained in $b' + C_i$ and $S$ is a typical component of the intersection of $W_c$ and $b + C$ with respect to $b' + C_i$, i.e. $\dim S = \dim W_c \cap (b' + C_i) + \dim (b + C) \cap (b' + C_i) - \dim (b' + C_i)$.

Note that our assumption that $\dim M \cdot W_c < \frac{k}{2}$ can be written as $\dim W_c \cap (b + C) > \frac{1}{2} \dim C$. Also, if $C'$ is any algebraic subgroup of $A^n$ with $C' \subset C$ and $b + C' \supset S$, then
\[
\dim W_c \cap (b + C') \geq \dim S = \dim W_c \cap (b + C) > \frac{1}{2} \dim C \geq \frac{1}{2} \dim C'.
\]
Hence $\dim W_c \cap (b + C') > \frac{1}{2} \dim C'$. This implies that we may assume $C$ to be the minimal algebraic subgroup having a coset that contains $S$. Thus, $C \cap C_i = C$ and therefore the typicality equation becomes
\[
\dim S = \dim W_c \cap (b' + C_i) + \dim C - \dim C_i.
\]

One can then easily see the following
\begin{align*}
\dim W_c\cap (b'+C_i) &= \dim S - \dim C + \dim C_i\\
&> -\frac{1}{2} \dim C + \dim C_i\\
&\geq \frac{1}{2} \dim C_i.
\end{align*}
This implies that $\dim M^i \cdot W_c < \frac{n_i}{2}$, which contradicts the fact that $\theta(c)$ holds.
\end{proof}

\subsubsection{The EC-property}

\begin{Def}
A structure $(A,G)$ in $\CC_0$ is said to have the \emph{EC-property} if for every even $n\geq 1$ and every rotund subvariety $W$ of $A^n$ of dimension $\frac{n}{2}$, the intersection $W \cap G^n$ is Zariski dense in $W$; i.e. for every proper subvariety $W'$ of $W$ the intersection $(W \setminus W') \cap G^n$ is non-empty.
\end{Def}

\begin{Lemm}
There exists a set of $\forall\exists$-$L$-sentences $T^1$ such that for any structure $(A,G)$ in $\CC_0$
\[
(A,G) \vDash T^1 \iff (A,G)\text{ has the EC-property.}
\]
\end{Lemm}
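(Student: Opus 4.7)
The plan is to write $T^1$ as the following set of $\forall\exists$-$L$-sentences. For each even $n\geq 1$, each $k\geq 0$, and each pair of subvarieties $W(x,y)$ and $W'(x,y)$ of $\Alg^{n+k}$ defined over $k_0$, include the sentence
\[
\forall y\,\Bigl[\,\phi_{W,W'}(y) \;\to\; \exists x\,\bigl(\bigwedge_{j=1}^{n} G(x_j)\,\wedge\, W(x,y)\,\wedge\,\neg W'(x,y)\bigr)\,\Bigr],
\]
where $\phi_{W,W'}(y)$ is a quantifier-free $L_{\Alg}$-formula asserting that $W(x,y)$ is a rotund subvariety of $\Alg^n$ of dimension $\tfrac{n}{2}$ and that $W(x,y)\cap W'(x,y)$ is a proper subvariety of $W(x,y)$. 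Each such sentence is $\forall y\,\exists x\,\psi(x,y)$ with $\psi$ quantifier-free, so it is $\forall\exists$.

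The key step is exhibiting the quantifier-free formula $\phi_{W,W'}(y)$. Rotundity of $W(x,y)$ as a condition on $y$ is definable by a quantifier-free formula by Lemma~\ref{lemm-rotundity}. The conditions $\dim W(x,y) = \tfrac{n}{2}$ and $\dim (W(x,y)\cap W'(x,y)) < \dim W(x,y)$ are definable because Morley rank is definable in the strongly minimal theory $T_{\Alg}$ (via the bi-interpretation of $T_{\Alg}$ with $ACF_{k_0}$ and the definability of Morley rank in strongly minimal theories, as already invoked in the proof of Lemma~\ref{lemm-rotundity}). Since $T_{\Alg}$ has quantifier elimination, these definable conditions can be chosen quantifier-free, and their conjunction gives $\phi_{W,W'}(y)$.

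It then remains to verify that $T^1$ axiomatises the EC-property inside $\CC_0$. For the forward direction, any rotund $V\subset A^n$ of dimension $\tfrac{n}{2}$ and any proper subvariety $V'\subsetneq V$ are both defined over some algebraically closed extension of $k_0$ and hence have the form $V = W(x,c)$, $V' = W'(x,c)$ for appropriate $k_0$-families $W,W'$ and a parameter tuple $c\in A^k$; the corresponding axiom in $T^1$ supplies a point of $(V\setminus V')\cap G^n$, giving Zariski-density of $V\cap G^n$ in $V$. For the converse, every instance of an axiom fires only when the hypothesis $\phi_{W,W'}(c)$ holds, i.e.\ only when $W(x,c)$ is a rotund variety of dimension $\tfrac{n}{2}$ and $W'(x,c)$ cuts out a proper subvariety, and then EC directly supplies the witness.

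The only potential obstacle is the simultaneous uniform definability of rotundity, of the dimension being exactly $\tfrac{n}{2}$, and of properness of $W'\cap W$ in $W$, within a single quantifier-free formula $\phi_{W,W'}(y)$; but all three ingredients are already available from Lemma~\ref{lemm-rotundity} and the standard definability of Morley rank, so no genuine difficulty arises.
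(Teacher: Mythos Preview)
Your proposal is correct and follows essentially the same approach as the paper: both define $T^1$ as the scheme of sentences, one for each pair of $k_0$-families $W(x,y),W'(x,y)$, with a quantifier-free antecedent expressing rotundity (via Lemma~\ref{lemm-rotundity}), the dimension condition $\dim W_y=\tfrac{n}{2}$, and properness of $W'_y$ in $W_y$, and an existential conclusion asserting a green point in $W_y\setminus W'_y$. The only cosmetic difference is that you phrase properness as $\dim(W_y\cap W'_y)<\dim W_y$ while the paper writes $W'_y\subsetneq W_y$; since $W_y$ is irreducible these yield equivalent axiomatisations.
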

\begin{proof}
For each even integer $n \geq 1$, and each subvariety $W(x,y)$ of $\Alg^{n+k}$ defined over $k_0$, let $\theta_W(y)$ be a formula as provided by Lemma~\ref{lemm-rotundity}. Let $T^1$ be the theory containing, for each pair of subvarieties $W(x,y)$ and $W'(x,y)$ of $\Alg^{n+k}$, the following sentence:
\begin{multline*}
\forall y \Big(
\big(\theta_W(y) \land \dim W_y = \frac{n}{2} \land W'_y \subsetneq W_y \big)\\ \rightarrow \exists x \big(W(x,y) \land \neg W'(x,y) \land \bigwedge_{i=1}^n G(x_i) \big) 
\Big)
\end{multline*}
It is clear that $T^1$ expresses the EC-property.
\end{proof}

% ASSUMPTION
Henceforth, let $T^1$ denote the theory defined in the above proof. Also, let $T := T^0 \cup T^1$. 

The rest of this section shows that $T$ axiomatizes the complete theory common to all rich structures in $\CC_0$.

%%%%%

% \subsubsection{Richness and Existential closedness}

\begin{Def}
Let $\A$ be a structure in $\CC_0$. 

$\A$ is said to be \emph{existentially closed (with respect to
strong extensions)} if for any quantifier-free formula
$\phi(x)$, if there exist a strong extension $\A'$ of $\A$
and a tuple $b'\subset A'$ such that $\A' \vDash \phi(b')$, then there
exists $b\subset A$ such that $\A \vDash \phi(b)$.

$\A$ is said to be \emph{strongly existentially closed (with respect to
strong extensions)} if for any partial type $\Phi(x)$ over a finite subset of $\A$ consisting of quantifier-free formulas, if there exist a strong extension $\A'$ of $\A$ and a tuple $b'\subset A'$ such that $\A' \vDash \Phi(b')$, then there exists $b\subset A$ such that $\A \vDash \Phi(b)$.
\end{Def}

Clearly, if $\A$ is strongly existentially closed, then $\A$ is existentially closed. Also, if $\A$ is $\omega$-saturated, then $\A$ is strongly existentially closed if and only if it is existentially closed.

\begin{Lemm} \label{lemm-ec-1}
If $\A \in \CC_0$ is existentially closed, then $\A$ has the EC-property.
\end{Lemm}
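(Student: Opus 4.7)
The plan is to use the existential closure of $\A$ to reduce the EC-property to the following: for any rotund $W \subset A^n$ of dimension $n/2$ and any proper subvariety $W' \subsetneq W$, build a strong extension $\A' \in \CC_0$ of $\A$ containing a tuple of green points in $W \setminus W'$. The condition
\[
\varphi(x) := W(x) \wedge \neg W'(x) \wedge \bigwedge_{i=1}^{n} G(x_i)
\]
is quantifier-free, so existential closure will then transport a realisation of $\varphi$ from $\A'$ back into $\A$, which is precisely the EC-property.

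To construct $\A'$, write $A = \Alg(K)$ and take $b$ to be a generic point of $W$ over $A$ inside some algebraically closed extension $K'' \supset K$; set $A' := \Alg(K'')$. Following the recipe of Remark \ref{isrotund}, choose a compatible system of divisors $(b^i)_{i \geq 1}$ in $(A')^n$ with $b^1 = b$ and $j \cdot b^{ij} = b^i$, and let $G'$ be the divisible $\End(\Alg)$-submodule of $A'$ generated by $G$ together with the coordinates of all the $b^i$. Then $\A' := (A',G')$ is a structure in $\CC$ extending $\A$, and the tuple $b = b^1$ lies in $(W \setminus W')(A') \cap (G')^n$, since $b$ is generic in the irreducible variety $W$ and $W'$ is a proper subvariety of it.

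The heart of the argument is the verification that $A \str \A'$: combined with $X_0 \str \A$ and the transitivity of $\str$, this places $\A'$ in $\CC_0$ as a strong extension of $\A$. By Remark \ref{ss-checkgreen}, it suffices to check $\delta(c/A) \geq 0$ for every tuple $c \subset G'$ that is $\End(\Alg)$-linearly independent over $A$. The key claim is that every such $c$ satisfies $Nc = a + M \cdot b$ for some positive integer $N$, some $a \in A$, and some endomorphism matrix $M$; this is because each coordinate of $c$ lies in $G + \spank(b)$, and clearing denominators modulo torsion produces an equation $Nc \equiv M \cdot b \pmod{A}$ whose torsion discrepancy already lies in $\Tor(A') = \Tor(A) \subset A$ and can therefore be absorbed into $a$. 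Granting this reduction, the linear independence of $c$ over $A$ forces $\rank(M)$ to equal the length of $c$, and since $b$ is generic in $W$ over $A$,
\[
\delta(c/A) = 2\trd(M \cdot b / A) - \rank(M) = 2\dim(M \cdot W) - \rank(M) \geq 0
\]
by rotundity of $W$ applied to $M$.

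The main obstacle is really only the bookkeeping in the reduction $Nc = a + M \cdot b$: one has to handle the coexistence of the divisors $b^{ij}$ (defined only modulo $j$-torsion) with the possibly non-green torsion of $A'$, and verify that the resulting identity deposits the torsion contribution into the $A$-part rather than altering $\rank(M)$. Once this is cleanly settled, the rotundity inequality $2\dim(M \cdot W) \geq \rank(M)$ closes the predimension calculation, and existential closure delivers the realisation of $\varphi$ inside $\A$.
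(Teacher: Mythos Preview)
Your proof is correct and follows essentially the same approach as the paper's: build a strong extension containing a green generic of $W$, then pull the witness back by existential closure. The only cosmetic difference is that the paper first forms the intermediate structure $\Y$ with domain $A + \spank(b)$, invokes Remark~\ref{isrotund} directly to get $\A \str \Y$, and then applies the extension property (Lemma~\ref{ext-cc0}) to reach a model $\A' \in \CC_0$; you instead take $A' = \Alg(K'')$ from the outset and redo by hand the predimension computation that Remark~\ref{isrotund} already packages. Both routes are fine, and your bookkeeping with $Nc = a + M\cdot b$ and $\Tor(A') = \Tor(A)$ is exactly what underlies that remark.
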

\begin{proof}
Let $\A =(A,G)\in \CC_0$ be existentially closed, with $A = \Alg(K)$. Let $W$ be a rotund subvariety of of $A^n$ of dimension $\frac{n}{2}$ and let $W'$ be a proper subvariety of $W$, both defined over $K$. We want to see that $(W \setminus W') \cap G^n$ is non-empty. Let $b$ be a generic point of $W$ over $K$ in a model $\bar A$ of $T_{\Alg}$ extending $A$. Let $Y$ be the substructure of $A'$ with domain $A + \spank(b)$. Let $(b^i)_{i \geq 1}$ be a sequence of tuples in $(\spank b)^n$ such that $b^1 = b$ and for all $i,j \geq 1$, $j b^{ij} = b^i$, and let $G(Y) : = G(A) + \langle b^i : i \geq 1 \rangle$. 
The rotundity of $W$ gives that $\Y = (Y, G(Y))$ is a strong extension of $\A$ in $\Sub\CC_0$ (as noted in \ref{isrotund}). 
By the extension property, we can find $\A'$ in $\CC_0$ with $\Y \str \A'$. Thus, $\A \str \A'$ and $b$ is a solution in $\A'$ of the quantifier-free formula $W(x) \land \neg W'(x) \land \bigwedge_{i=1}^n G(x_i)$. Since $\A$ is existentially closed, there exists a solution of the same formula in $\A$, hence $(W \setminus W') \cap G^n$ is non-empty.
\end{proof}

\begin{Lemm} \label{lemm-ec-2}
If $\A \in \CC_0$ is rich, then $\A$ is strongly existentially closed.
\end{Lemm}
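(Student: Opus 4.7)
My plan is to reduce the strong existential closedness of a rich $\A$ to a direct application of the richness property, by replacing the (possibly large) strong extension $\A'$ in the hypothesis with a finite-dimensional strong substructure $\Y$ of $\A'$ containing everything of interest, and then embedding $\Y$ into $\A$.

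Concretely, suppose $\Phi(x)$ is a partial type with parameters in a finite set $C \subset A$, realized in a strong extension $\A' \supset \A$ by some tuple $b'$. The first step is to form $X_A := \scl^{\A}(C)$; this is a finite-dimensional $\cl_0$-closed strong substructure of $\A$, and by transitivity of $\str$ (Lemma~\ref{ss-props}) it is also strong in $\A'$. In particular the corresponding structure $\X_A \in \Fin\CC_0$. Next, inside $\A'$ I take $\Y := (Y, G \cap Y)$ with $Y := \scl^{\A'}(X_A \cup b')$. Since $X_A \cup b'$ has finite $\cl_0$-dimension and the predimension $\delta$ is bounded below on extensions of a fixed finite-dimensional set, $\scl^{\A'}(X_A b')$ is finite dimensional, so $\Y \in \Fin\CC_0$; by construction $\X_A \str \Y$ and $\X_A \str \A$.

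The key step is then invoking richness of $\A$: this gives a strong embedding $f \colon \Y \to \A$ over $\X_A$. Setting $b := f(b')$, the tuple $b$ lies in $A$ and I claim it realizes $\Phi$. Indeed, $f$ is an $L$-isomorphism between substructures of $\A'$ and $\A$ which fixes every element of $X_A$ (in particular the parameters $C$). For any quantifier-free $L$-formula $\phi(x,c) \in \Phi$ with $c \subset C$, satisfaction in the $L$-substructures $\Y \subset \A'$ and $f(\Y) \subset \A$ agrees with satisfaction in the ambient structures; hence $\A' \vDash \phi(b',c)$ transfers via $f$ to $\A \vDash \phi(b,c)$. So $b$ realizes $\Phi$ in $\A$, as required.

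The only real obstacle is bookkeeping: one must make sure both that $\X_A$ is strong in the two structures $\A$ and $\Y$ simultaneously (handled by transitivity together with $\scl^{\A}(C) = \scl^{\A'}(C)$, which follows from $A \str \A'$) and that $\Y$ genuinely lies in $\Fin\CC_0$ over $\X_0$ (which is automatic because $\X_0 \str \X_A \str \Y$). Once these points are verified, the argument is essentially a single application of the defining property of richness.
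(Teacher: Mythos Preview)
Your proof is correct and follows essentially the same approach as the paper's: replace the parameter set by its strong closure $\X$ in $\A$, use transitivity of $\str$ to get $\X \str \A'$, take a finite-dimensional $\cl_0$-closed $\Y \subset \A'$ containing $\X$ and $b'$ with $\X \str \Y$, and apply richness. The only cosmetic difference is that the paper takes $\Y$ to have domain $\spank(bc)$ rather than $\scl^{\A'}(X_A b')$; since richness only needs $\X \str \Y$ (not $\Y \str \A'$), the smaller set suffices, but your choice works equally well.
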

\begin{proof}
Let $\A \in \CC_0$ be rich. Let $\Phi(x)$ be a quantifier-free partial type over a finite subset $c$ of $A$, $\A'$ be a strong extension of $\A$ and $b$ be a solution of $\Phi(x)$ in $\A'$. By replacing $c$ by an appropriate basis of its strong closure, we may assume that $\spank c$ is strong in $\A$. Let $\X$ be the substructure of $\A$ with domain $\spank c$. Let $\Y$ be the substructure of $\A'$ with domain $\spank bc$. Since $\X \str \A \str \A'$, by transitivity $\X \str \A'$. In particular, $\X \str \Y$. Thus, by the richness of $\A$, we can find an embedding $j$ of $\Y$ into $\A$ over $X$. Then $j(b)$ is clearly a solution of $\Phi(x)$ in $\A$.
\end{proof}

\subsubsection{Axiomatizing richness up to $\omega$-saturation}

It is clear from the definitions that the models of $T$ are precisely the structures in $\CC_0$ satisfying the EC-property.
We shall now see that the $\omega$-saturated models of $T$ are precisely the rich structures in $\CC_0$. It follows that $T$ axiomatizes the complete theory common to all rich structures in $\CC_0$.\\

\noindent\textbf{Rich structures are models of $T$.}
\begin{Rem} \label{rich-modT}
By \ref{lemm-ec-1}, every existentially closed structure in $\CC_0$ is a model of $T$. In particular, by \ref{lemm-ec-2}, every rich structure in $\CC_0$ is a model of $T$.
\end{Rem}

\noindent\textbf{$\omega$-saturated models of $T$ are rich.}
\begin{Def}
Let $\X,\Y$ be structures in $\Sub \CC_0$. Assume $\X \str \Y$. We say that $\X \str \Y$ is a \emph{minimal} strong extension if  $X \neq Y$ and there exists no $\ZZ \in \Sub\CC_0$ with $X \subsetneq Z \subsetneq Y$ such that $\X \str \ZZ \str \Y$.
\end{Def}

\begin{Rem}
Note that every strong extension $\X \str \Y$ with $\X,\Y$ in $\Fin\CC_0$ decomposes into a finite tower of minimal strong extensions, i.e. there exist a positive integer $n$ and $\X_0, \dots, \X_n \in \Fin\CC_0$ such that $\X_0 = \X$, $\X_n = \Y$ and for all $i=0,\dots,n-1$, $\X_i \str \X_{i+1}$ is a minimal strong extension.

Consequently, for all $\A\in \CC_0$, the structure $\A$ is rich if and only if for all $\X,\Y \in \Fin \CC_0$ such that $\X \str \A$ and $\X \str \Y$ is a \emph{minimal} strong extension, there exists a strong embedding of $\Y$ into $\A$ over $\X$.
\end{Rem}

\begin{Rem} \label{rem-minimal}
Let $\X,\Y \in \Fin\CC_0$ with $\X \str \Y$ and $\delta(X) = \delta(Y)$. It is easy to see that the extension $\X \str \Y$ is minimal if and only if for every $\cl_0$-closed $Z$ with $X \subsetneq Z \subsetneq Y$, $\delta(Z/X) > 0$.
\end{Rem}

%%%%%%%%%%%%%%%%%%%%%

\begin{Lemm} \label{min-exts}
Let $\X,\Y$ be structures in $\Fin\CC_0$ such that $\X \str \Y$ is a minimal strong extension. Then exactly one of the following holds:
\begin{enumerate}

\item \emph{(Prealgebraic extension)} There exists an even number $n \geq 2$ and a $\cl_0$-basis $b \in G(Y)^n$ of $Y$ over $X$ with $\trd(b/X) = \frac{n}{2}$, and hence $\delta(b/X) = 0$.

\item \emph{(Green generic extension)} There is $b_0\in G(Y) \setminus X$ such that $Y = X + \spank b_0$ and $b_0$ is transcendental over $X$, hence $\delta(b_0/X) = 1$.

\item \emph{(Algebraic extension)} $G(Y) = G(X)$ and there is $b_0 \in Y \setminus X$ such that $Y = X + \spank b_0$ and $b_0$ is algebraic over $X$, hence $\delta(b_0/X) = 0$.

\item \emph{(White generic extension)} $G(Y) = G(X)$ and there is $b_0 \in Y \setminus X$ such that $Y = X + \spank b_0$ and $b_0$ is transcendental over $X$, hence $\delta(b_0/X) = 2$.
\end{enumerate}
\end{Lemm}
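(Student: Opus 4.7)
The plan is to case-split on whether $G(Y) \supsetneq G(X)$ and, within each case, to exploit minimality together with strongness ($\delta(W/X)\geq 0$ for every $\cl_0$-closed $W\subseteq Y$) to pin down the structure of $Y$ over $X$. The minimality hypothesis will be applied in the form: any $\cl_0$-closed $Z$ with $X \subsetneq Z \subsetneq Y$ and $X \str Z \str Y$ leads to a contradiction.

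The easier case is $G(Y) = G(X)$. Pick any $b \in Y\setminus X$ (necessarily white) and set $Z := \cl_0(X\cup\{b\}) = X + \spank b$. Since $G(Y) = G(X)$, every $\cl_0$-closed $W$ with $Z \subseteq W \subseteq Y$ satisfies $W\cap G = X\cap G = Z\cap G$, so $\delta(W/Z) = 2\trd(W/Z) \geq 0$ and therefore $Z \str Y$. Minimality forces $Z = Y$, and the algebraic versus transcendental character of $b$ over $X$ distinguishes cases (3) and (4).

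Suppose now $G(Y) \supsetneq G(X)$ and pick $b_0 \in G(Y)\setminus G(X)$. Since $X$ is $\cl_0$-closed, $b_0 \notin X$; if $b_0$ were algebraic over $X$, then $\delta(\cl_0(Xb_0)/X) \leq -1$, contradicting strongness. Hence $b_0$ is transcendental and $\delta(\cl_0(Xb_0)/X) = 1$. If $\cl_0(Xb_0)\str Y$, minimality gives $Y = \cl_0(Xb_0)$, yielding case (2). Otherwise, $\scl_Y(Xb_0)$ is a $\cl_0$-closed strong subset of $Y$ properly containing $X$, so by minimality $\scl_Y(Xb_0) = Y$. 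Using the characterisation $\dd(Xb_0) = \delta(\scl_Y(Xb_0))$ from Remark~\ref{pred2cl} together with $\dd(Xb_0) \leq \delta(\cl_0(Xb_0))$, one obtains $\delta(Y/X) \in \{0,1\}$. The value $1$ is ruled out: it would force $\delta(W/X) \geq 1$ for every $\cl_0$-closed $W\supseteq \cl_0(Xb_0)$ in $Y$ (by the definition of $\dd$ as the minimum of $\delta$ over supersets), making $\cl_0(Xb_0)\str Y$ and contradicting the subcase assumption. Hence $\delta(Y/X) = 0$.

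To conclude case (1) in this last subcase, write $g := \ld(Y\cap G/X\cap G)$, $t := \trd(Y/X)$, and set $Y^G := \cl_0(X\cup(Y\cap G))$. Since $\delta(Y/X) = 0$ we have $g = 2t$; strongness forces $\trd(Y^G/X) \geq g/2$, and $\trd(Y^G/X)\leq t = g/2$ yields equality and $\delta(Y^G/X) = 0$. For any $\cl_0$-closed $W$ with $Y^G \subseteq W \subseteq Y$, $W\cap G = Y\cap G = Y^G\cap G$, so $\delta(W/Y^G) = 2\trd(W/Y^G) \geq 0$, showing $Y^G \str Y$. Minimality (using $b_0 \in Y^G\setminus X$) forces $Y^G = Y$, so $Y$ is $\cl_0$-generated over $X$ by green points. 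Modularity of $\cl_0$ then lets one extract a $\cl_0$-basis $b \in G(Y)^n$ of $Y$ over $X$ of even length $n = g \geq 2$ satisfying $\trd(b/X) = n/2$, giving case (1). Mutual exclusivity follows directly from this structural description: cases (3), (4) are characterised by $G(Y) = G(X)$, case (2) by $\ld(Y/X) = 1$ with a green generator, and case (1) by $\ld(Y/X) \geq 2$.

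The main obstacle is the elimination of $\delta(Y/X) = 1$ in the last subcase: it hinges on translating the identity $\dd(Xb_0) = \delta(\scl_Y(Xb_0))$ into a strongness assertion for $\cl_0(Xb_0)$, contradicting the assumption that $\cl_0(Xb_0)$ is \emph{not} strong in $Y$. The subsequent ``green generation'' step ($Y^G = Y$) is routine once this is in place, being a direct consequence of strongness and minimality applied to the canonically defined intermediate $Y^G$.
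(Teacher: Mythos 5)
Your proof is correct, and at bottom it rests on the same ingredients as the paper's: a case split on whether $G(Y)=G(X)$, plus minimality and strongness to collapse intermediate $\cl_0$-closed sets. The organisation of the case $G(Y)\supsetneq G(X)$ is where you diverge. The paper splits that case according to the sign of $\delta(Y/X)$: when $\delta(Y/X)>0$ it shows directly that $\ld(Y/X)=1$ by exhibiting, from any putative second dimension, an explicit tower $X \str Z \str Y$ of proper strong extensions (either $X \str X+\spank b_0 \str Y$, or one built from a tuple with $\delta(\spank b/\spank b_0)<0$); when $\delta(Y/X)=0$ it simply asserts that minimality gives $Y = X + \spank G(Y)$ and reads off the green basis. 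You instead split on whether $\spank(Xb_0)$ is strong in $Y$, and in the non-strong subcase you route through the strong closure inside $Y$ and the identity $\dd = \delta(\scl)$ (Remark~\ref{pred2cl} and the remark following it, transplanted from $\CC_0$ to the finite-dimensional structure $\Y$, which is harmless) to force $\delta(Y/X)=0$; your $Y^G = \cl_0(X\cup(Y\cap G))$ argument then supplies in detail the step the paper leaves terse, namely that minimality forces $Y$ to be green-generated over $X$, and your count $n=g=2t$ via the submodule/modularity observation matches the paper's appeal to modularity. What your route buys is that the elimination of $\delta(Y/X)=1$ and the green generation are both reduced to one uniform mechanism (strong closures and the minimality of $\delta$ on them), at the cost of invoking the $\dd$/$\scl$ machinery; the paper's route is more hands-on but correspondingly sketchier in the prealgebraic case. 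The mutual-exclusivity remarks at the end are fine as stated.
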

\begin{proof}
Let $\X,\Y$ be structures in $\Fin\CC_0$ such that $\X \str \Y$ is a minimal strong extension. Then exactly one of the following cases occurs:

\noindent\textbf{Case 1:} $G(Y) \neq G(X)$ and $\delta(Y/X)= 0$.

First note that, by minimality, $Y = X + \spank G(Y)$. Let $n$ be the
linear dimension of $Y$ over $X$, which is also the linear dimension of $G(Y)$ over $G(X)$, by modularity. Then, since $\delta(Y/X) = 0$, we have $\trd(Y/X) = \frac{n}{2}$. 
Thus, if $b \in G(Y)^n$ is a $\cl_0$-basis of $Y$ over $X$, then $b$ is also a $\cl_0$-basis of $G(Y)$ over $G(X)$ and $\trd(b/X) = \frac{n}{2}$.

\noindent\textbf{Case 2:} $G(Y) \neq G(X)$ and $\delta(Y/X)> 0$.

As in the previous case, $Y = X + \spank G(Y)$. Moreover, 
\[
\ld(Y/X) = \ld(G(Y)/G(X)) = 1.
\]
To see this, take an element $b_0$ in $G(Y) \setminus G(X)$, and note that
$\trd(b_0/X) = 1$, and hence $\delta(b_0/X) = 1$. Now, if $\ld(Y/X)$ is strictly greater than one then we get a tower of proper strong extensions: either $X \str X+\spank b_0 \str Y$ is such
a tower, or otherwise there exists a tuple $b \subset Y$ starting with the element $b_0$ such
that $\spank b_0 \subsetneq \spank b \subsetneq Y$ and $\delta(\spank b/
\spank b_0) < 0$, and hence $\delta(X+\spank b) = \delta(X)$, from which it
follows that $X \str X+\spank b \str Y$ is a tower of (proper) strong extensions.

Thus, for any element $b_0\in G(Y) \setminus X$, $Y = X + \spank b_0$.

\noindent\textbf{Case 3:} $G(Y) = G(X)$ and $\delta(Y/X) = 0$.

By the minimality of the extension, $Y = X + \spank b_0$ for any $b_0 \in Y \setminus X$. Also, since $\delta(Y/X) = 0$, any such $b_0$ is algebraic over $X$.

\noindent\textbf{Case 4:} $G(Y) = G(X)$ and $\delta(Y/X) > 0$.

As in the previous case, $Y = X + \spank b_0$, for any $b_0 \in Y \setminus X$. Thus, $\trd(Y/X) \leq 1$ and, since we are assuming $\delta(Y/X)> 0$, such $b_0$ must be transcendental over $X$, and hence $\delta(Y/X) = 2$. 
\end{proof}
%%%

We shall henceforth use the names given in the above lemma to the different kinds of extensions. For the sake of brevity let us also stress the language by making the following definition:

\begin{Def}
Let us say that a rotund subvariety $W$ of $A^n$ is \emph{prealgebraic minimal} if it has dimension $\frac{n}{2}$ and for every $1 \leq k < n$, and every $k \times n$-matrix with entries in $\End(\Alg)$ of rank $k$, $\dim M \cdot W > \frac{k}{2}$.
\end{Def}

It is easy to see that a rotund variety $W$ is prealgebraic minimal if and only if any strong extension constructed from $W$ as in Remark~\ref{isrotund} is a prealgebraic minimal extension.

%%%%

%
Using Remark~\ref{rem-minimal}, a minor modification of the proof of Lemma~\ref{lemm-rotundity} (strengthening the formula $\theta_0(y)$ to require $\dim W_y = \frac{n}{2}$ and each $\theta_i(y)$ to correspond to the \emph{strict} inequality $\dim M^i \cdot W_c > \frac{n_i}{2}$) yields the following
:
\begin{Lemm} \label{lemm-rotundity-premin}
For every subvariety $W(x,y)$ of $\Alg^{n+k}$ defined over $k_0$, there exists a quantifier-free $L_{\Alg}$-formula $\theta'(y)$ such that for all $A\models T_{\Alg}$ and all $c \in A^k$,
\[
A \vDash \theta'(c) \iff W(x,c) \text{ is a prealgebraic minimal rotund subvariety}.
\]
\end{Lemm}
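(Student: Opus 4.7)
The plan is to mimic the proof of Lemma~\ref{lemm-rotundity} almost verbatim, adjusting the formulas so that the required inequalities are strict and so that the overall dimension is pinned to $\frac{n}{2}$ exactly. Concretely, the same family $C_1,\dots,C_s$ of proper algebraic subgroups of $\Alg^n$ provided by Weak CIT for the family $W(x,y)$ serves here, with defining matrices $M^i$ of rank $n_i$. I would then take $\theta'(y)$ to be the conjunction of:
\begin{itemize}
\item a quantifier-free formula $\theta'_0(y)$ expressing ``$W_y$ is irreducible and $\dim W_y = \frac{n}{2}$'', using again the definable multiplicity property and the definability of Morley rank in the strongly minimal theory $T_{\Alg}$;
\item for each $i\in\{1,\dots,s\}$ with $n_i<n$, a quantifier-free formula $\theta'_i(y)$ expressing ``$\dim M^i\cdot W_y > \frac{n_i}{2}$'' (strict).
\end{itemize}

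The forward direction is immediate from the definition of prealgebraic minimal: $\theta'_0$ follows from $\dim W_c=\frac{n}{2}$ and, for each $i$ with $1\le n_i<n$, the matrix $M^i$ has rank $n_i<n$, so $\dim M^i\cdot W_c>\frac{n_i}{2}$ by hypothesis.

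The real content is the converse. Assume $\theta'(c)$ holds but $W_c$ is not prealgebraic minimal; since $\theta'_0$ gives $\dim W_c=\frac{n}{2}$, there must exist an $n\times k$-matrix $M$ with entries in $\End(\Alg)$ of some rank $k$ with $1\le k<n$ and $\dim M\cdot W_c \le \frac{k}{2}$. Pick a generic $b\in W_c$ and let $C$ be defined by $M\cdot x=0$; let $S$ be an irreducible component of $W_c\cap(b+C)$ through $b$. Exactly as in the original proof, $\dim S = \dim W_c\cap(b+C) = \dim W_c - \dim M\cdot W_c \ge \frac{n}{2}-\frac{k}{2} = \frac{1}{2}\dim C$, while $\dim W_c+\dim C - n = \frac{n}{2}-k$, which is strictly less than $\frac{1}{2}\dim C = \frac{n-k}{2}$ for $k\ge 1$; so $S$ is atypical. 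Applying Weak CIT and minimising $C$ so that $C\subset C_i$, the typicality equation gives
\[
\dim W_c\cap(b+C_i) = \dim S + \dim C_i - \dim C \ge \tfrac{1}{2}\dim C + \dim C_i - \dim C = \dim C_i - \tfrac{1}{2}\dim C,
\]
and since $C\subset C_i$ forces $\dim C\le\dim C_i$, we obtain $\dim W_c\cap(b+C_i)\ge \frac{1}{2}\dim C_i$, i.e.\ $\dim M^i\cdot W_c \le \frac{n_i}{2}$. If $n_i<n$ this directly contradicts $\theta'_i(c)$. If $n_i=n$ then $C_i=\{0\}$, so $S$ is a point, which forces $\dim W_c\cap(b+C)=0$ and hence $\dim M\cdot W_c=\dim W_c=\frac{n}{2}$; combined with our assumption $\dim M\cdot W_c\le\frac{k}{2}$, this gives $n\le k$, contradicting $k<n$.

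The main obstacle is purely bookkeeping: the original proof of Lemma~\ref{lemm-rotundity} revolves around a strict inequality in the atypicality step, and one needs to check that tightening the rotundity inequality from $\ge$ to $>$ does not break the Weak CIT--based reduction. The two points where care is needed are (i) the step where $\frac{1}{2}\dim C\le \dim S$ now comes with equality, not strict inequality—here the fact that $C\subset C_i$ ensures $\dim C\le\dim C_i$ still yields the needed $\dim M^i\cdot W_c\le\frac{n_i}{2}$; and (ii) the boundary case $n_i=n$, which falls outside the range for which we included a formula $\theta'_i$, but which is absorbed by $\theta'_0$ via the dimension equality $\dim W_c=\frac{n}{2}$.
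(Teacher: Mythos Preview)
Your proof is correct and follows exactly the approach the paper sketches: strengthen $\theta_0$ to pin $\dim W_y$ to $\frac{n}{2}$ and replace each $\theta_i$ by the strict inequality $\dim M^i\cdot W_y>\frac{n_i}{2}$, then rerun the Weak CIT argument with the non-strict bound $\dim S\ge\frac12\dim C$ in place of the strict one. Your explicit separation of the boundary case $n_i=n$ and the observation that the chain of inequalities still closes up despite losing strictness at one step are exactly the bookkeeping the paper's one-line parenthetical leaves to the reader.
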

%%%%

%
The following two remarks use the above definability lemma and model-theoretic arguments to find new rotund varieties from the ones coming from intersections of generic hyperplanes. These results will then be applied in the proof of  Lemma~\ref{satmodT-rich}.

\begin{Rem} \label{hyper-over-alg}
Let us first consider the case where $\Alg$ is the multiplicative group. For each $n \geq 1$, if $H_{2n,n}(x,c^1,\dots,c^n)$ is the intersection of $n$ generic hyperplanes in $A^{2n}$ (i.e. $\trd(c^1,\dots,c^n) = 2n^2$), then $H_{2n,n}(x,c^1,\dots,c^n)$ is a prealgebraic minimal rotund variety.

Indeed, we already know, by \ref{rem-generic-hyper}, that $H_{2n,n}(x,c^1,\dots,c^n)$ is a rotund variety of dimension $n$. Now consider a generic point $b$ of $H_{2n,n}(x,c^1,\dots,c^n)$ over $C:= \bigcup c^i$ and let $b'$ be a subtuple of $b$ of length $k$ with $1 \leq k < 2n$. Using the algebraic independence of $C$ one sees the following: if $1\leq k \leq n$, then $\trd(b'/C) = \trd(b/C) - \trd(b/C b') \geq n - (n-k) = k > \frac{k}{2}$; if $n < k < 2n$, then $\trd(b'/C) = \trd(b/C) - \trd(b/C b') \geq n - 0 = n > \frac{k}{2}$. This shows that $H_{2n,n}(x,c^1,\dots,c^n)$ is prealgebraic minimal.
% To see the above rewrite the equations defining H as c^i'' \cdot b'' = 1 - c^i' \cdot b' to have varieties over Cb' containing b (or b'', the complement of b' in b).

%%%%%%%%%%%
Then, by Lemma~\ref{lemm-rotundity-premin} and the model-completeness of $T_{\Alg}$, it follows that for each $n \geq 1$, there exist tuples $c^{1*},\dots,c^{n*} \in ((\Q\alg)^*)^n$ such that $H_{2n,n}(x,c^{1*},\dots,c^{n*})$ is a prealgebraic minimal rotund subvariety of $A^{2n}$.

As noted earlier, in the elliptic curve case we have an analogue of each generic hyperplane $H_{2n,n}(x,c^1\dots,c^n)$ with algebraically independent $c^1\dots,c^n$ over $k_0$, namely the Zariski closure of the intersection of the generic hyperplane $H_{4n,2n}(x,c'^1\dots,c'^{2n})$, where $c'^1,\dots, c'^{2n}$ are algebraically independent over $k_0$, and $A^n$. Therefore we also have an analogue of the varieties $H_{2n,n}(x,c^{1*},\dots,c^{n*})$ found above, i.e. prealgebraic minimal rotund subvarieties of $A^{2n}$ defined over $k_0\alg$. To ease the notation, we shall henceforth, also in the case where $\Alg$ is an elliptic curve, use $H_{2n,n}(x,c^{1*},\dots,c^{n*})$ to denote such a subvariety of $A^{2n}$.
\end{Rem}

\begin{Rem} \label{g-alg}
Let $\A = (A,G)$ be a model of $T$. Let us see that $A = \acl_{T_{\Alg}} (G)$.  

If $\A$ is the multiplicative group, then a stronger statement is easy to prove: by the rotundity of the varieties defined by the equations $X+Y=c$ with $c \in A = K^*$, that $\A$ satisfies the EC-property directly implies that $G+G = A$. 

Assume now that $\Alg$ is an elliptic curve. 

Let $c \in K$, $c \neq 0$. Let $q_1,\dots,q_4$ be algebraically independent over $k_0(c)$. The set $H(x,y,q,c)$ defined by the equation $q_1x_1 + q_2y_1 + q_3x_1 + q_4y_2 = c$ is a generic hyperplane in $K^4$. Therefore this equation defines a rotund subvariety of $A^2$. By the algebraic independence assumption, the algebraic type of $q$ over $k_0(c)\alg$ does not fork over $k_0\alg$. An alternative way of expressing this last fact is that the algebraic type of $q$ over $k_0(c)\alg$ is a coheir of its restriction to $k_0\alg$, which means that it is finitely satisfiable in $k_0\alg$. Combining this finite satisfiability with the fact that the rotundity of $H(x,y,q,c)$ is definable on the parameters $q,c$ (in the algebraic language), we see that there exists $q^*_1,\dots,q^*_4 \in k_0\alg$ such that the equation $q^*_1x_1 + q^*_2y_1 + q^*_3x_1 + q^*_4y_2 = c$ defines a rotund subvariety of $A^2$. Since $\A$ satisfies the EC-property, this subvariety has a solution in $G^2$. This implies that $c$ is in $\acl_{\Alg}^{\text{eq}}(G)$. It follows that $K \subset \acl_{\Alg}^{\text{eq}}(G)$ and, therefore, $A = \acl_{\Alg}(G)$.

Additionally, let us note the following: suppose we start the above argument with $c \in K$ transcendental over $k_0$ and thus find $q^*_1,\dots,q^*_4 \in k_0\alg$ such that $q^*_1x_1 + q^*_2y_1 + q^*_3x_1 + q^*_4y_2 = c$ defines a rotund subvariety of $A^2$. Then, for any $c' \in K$ transcendental over $k_0$, since $c$ and $c'$ are conjugates over $k_0\alg$, the subvariety of $A^2$ defined by $q^*_1x_1 + q^*_2y_1 + q^*_3x_1 + q^*_4y_2 = c'$, with \emph{the same} $q^*_1,\dots,q^*_4$ as before, is also rotund. This is used in the proof of the following lemma.
\end{Rem}

%%%%%%%%%%%%%%

\begin{Lemm} \label{satmodT-rich}
If $\A$ is an $\omega$-saturated model of $T$, then $\A$ is a rich structure in $\CC_0$.
\end{Lemm}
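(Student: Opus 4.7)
The plan is to verify richness directly from the definition. Let $\A$ be an $\omega$-saturated model of $T$ and let $\X, \Y \in \Fin\CC_0$ with $\X \str \A$ and $\X \str \Y$; the goal is to produce a strong embedding $\Y \hookrightarrow \A$ over $\X$. Using the earlier remark that every strong extension between finite dimensional structures factors as a finite tower of minimal strong extensions, it suffices to treat the case that $\X \str \Y$ is itself minimal, and Lemma~\ref{min-exts} splits this into four types: prealgebraic, green generic, algebraic, and white generic. In each case the approach is to specify a partial $L$-type $\Phi(x)$ over $X$ whose realisations in $\A$ induce a strong embedding of $\Y$ into $\A$, to verify that $\Phi$ is finitely satisfiable in $\A$, and then to apply $\omega$-saturation.

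The prealgebraic case is immediate from the EC-property: the $L_{\Alg}$-locus of a green $\cl_0$-basis $b$ of $Y$ over $X$ is, by Remark~\ref{isrotund}, a prealgebraic minimal rotund subvariety $W$ of $A^n$ over $X$, and the finite subsets of the type asking for a generic green point of $W$ are realised in $\A$ by the EC-property, an axiom scheme of $T^1$. Strongness of $\spank(Xb')$ in $\A$ then follows from $\delta(b'/X)=0$, $\X \str \A$ and additivity of $\delta$: for every $\cl_0$-closed $Z \supset \spank(Xb')$ in $\A$ one has $\delta(Z/\spank(Xb')) = \delta(Z/X) - \delta(b'/X) = \delta(Z/X) \geq 0$. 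The algebraic case is analogous: any realisation of the algebraic $L_{\Alg}$-type of $b_0$ in $\A$ exists by strong minimality of $T_{\Alg}$, and is automatically white since a green realisation would contribute $\delta(b'_0/X)=-1$, contradicting $\X \str \A$.

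The green and white generic cases form the main obstacle: because $\delta(b'/X) > 0$ the above additivity calculation no longer forces strongness, so $\Phi$ must additionally enforce that $b'$ has maximal $\dd$-dimension, i.e.\ participates in no prealgebraic configuration over $X$. The plan is to express this as the universal schema stating that for every prealgebraic minimal rotund subvariety $V$ over $X$ and every proper subvariety $V' \subsetneq V$, every green tuple $y$ with $(x,y) \in V$ already lies in $V'$. Existence of green transcendentals over $X$ in $\A$ is supplied by applying the EC-property to a prealgebraic minimal rotund subvariety $W_{c'}$ of $A^2$ as in Remark~\ref{g-alg}, with $c' \in \A$ transcendental over $k_0$ provided by $\omega$-saturation; existence of white transcendentals over $X$ reduces to the infinitude of $A \setminus G$, itself a consequence of a predimension calculation ruling out $G = A$ via an $\End(\Alg)$-linearly independent tuple of transcendence degree one over $X_0$. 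In both generic cases, finite satisfiability of the enlarged $\Phi$ is then established by a genericity argument combining $\omega$-saturation with the abundance of rotund-variety constructions over varying parameters, which allows one to avoid any prescribed finite list of forbidden prealgebraic patterns while keeping the desired colour and transcendence.
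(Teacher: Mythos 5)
Your skeleton matches the paper's: decompose $\X \str \Y$ into minimal strong extensions, treat the four cases of Lemma~\ref{min-exts}, and realise a suitable partial type over $X$ by $\omega$-saturation, with the EC-property doing the work in the prealgebraic case. The prealgebraic and algebraic cases are essentially right, with one bookkeeping point you skip: an $L$-embedding of $\Y$ must send the chosen division points $b^i$ of the green basis to \emph{green} points with the correct algebraic type, so one should realise (for each finite stage) the type of a division point $b^N$ by a green generic of its locus and recover the $b^i$ as integer multiples; realising only the type of $b$ itself does not obviously extend to $\Y$.

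The genuine gap is in the two generic cases, which are the heart of the lemma. First, your universal schema quantifies only over prealgebraic \emph{minimal} varieties $V$ over $X$: if $\dd(b_0/X)=0$ is witnessed by a tower of two or more prealgebraic minimal extensions, $b_0$ need not be a coordinate of a green generic point of any prealgebraic minimal variety over $X$, so a realisation of your type can still fail to give a strong embedding. Second, even the correct condition, namely $\delta(b_0,\bar y/X)>0$ for every green tuple $\bar y$, is not a first-order schema for free: for a fixed locus the required conclusion ``$b_0\bar y$ is linearly dependent over $\spank X$'' is a priori an infinite disjunction, and making it first-order is exactly the content of Lemma~\ref{lemm-ss-def}, which uses Weak CIT together with the Mordell--Lang property and only applies over \emph{green} parameters; this is why the paper first replaces $X$ by a green set $X'$ with $X\subset\acl_{T_{\Alg}}(X')$, using $A=\acl_{T_{\Alg}}(G)$ (Remark~\ref{g-alg}), a reduction absent from your proposal. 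Third, your finite-satisfiability step is asserted rather than proved: the paper's argument is concrete, producing prealgebraic minimal extensions of $X$ of arbitrarily large linear dimension from intersections of generic hyperplanes with parameters pushed into $k_0\alg$ (Remark~\ref{hyper-over-alg}) and realising them via the prealgebraic case; ``abundance of rotund-variety constructions'' is precisely where the work lies. Finally, for the white generic case your template would have to express $\dd(b_0/X)=2$ for a \emph{white} $b_0$, which the available definability results do not give (they express predimension inequalities only for green configurations), and ``$A\setminus G$ is infinite'' is beside the point: a white transcendental over $X$ may have $\dd(b_0/X)\le 1$ or create new green points in $\spank(Xb_0)$. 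The paper circumvents this by building $b_0$ from two elements $b_1,b_2$ with $\dd$-rank $1$ over $X$ via the rotund variety $x+y=b_0$ (respectively a rotund hyperplane section in the elliptic case) and then computing $\dd(b_0/X)=2$ by a short case analysis; some such device is needed in your argument as well.
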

\begin{proof}
Assume $\A = (A,G)$ is an $\omega$-saturated model of $T$. Since $T$ contains $T^0$, which by definition axiomatizes the class $\CC_0$, it is clear that $\A$ is in $\CC_0$.

To show that $\A$ is rich, let $\X,\Y$ be structures in $\Fin\CC_0$ such that $\X$ is strong in $\A$ and $\X \str \Y$ is a minimal strong extension. We need to show that there is a strong embedding of $\Y$ into $\A$ over $X$. We consider different cases for the different kinds of minimal strong extensions, as determined in Lemma~\ref{min-exts}.

\noindent\textbf{Case 1:} $\X \str \Y$ is a prealgebraic minimal extension.

Let $b \in G(Y)^n$ be a $\cl_0$-basis of $Y$ over $X$. 

Let $(b^i)_{i \geq 1}$ be a sequence of tuples in $G(Y)^n$ such that $b^1 = b$ and for all $i,j \geq 1$, $j b^{ij} = b^i$. Note that $G(Y) = G(X) + \langle b^i : i \geq 1 \rangle$. It is sufficient to show that $\qftp((b^i)_i/X)$ is realised in $\A$ to obtain an embedding of $Y$ into $\A$ over $X$. Moreover, since in this case $\delta(X) = \delta(Y)$, any such embedding is necessarily strong.

Let us show that indeed $\qftp((b^i)_i/X)$ is realised in $\A$. By the $\omega$-saturation of $\A$, it is sufficient to show that for each $n \geq 1$, the type $\qftp((b^i)_{i\leq n}/X)$ is realised in $\A$: Fix $n \geq 1$. Let $N = \prod_{i \leq n} i$. Let $V := \locus(b^N / k_0(X)\alg)$. By Remark~\ref{isrotund}, $V$ is a rotund subvariety of $A^n$. Notice that, since $\delta(b^N/X) = \delta(Y/X) =0$, we have $\dim V = \frac{n}{2}$. Since $\A$ is a model of $T$, for every proper subvariety $V'$ of $V$ over $k_0(X)\alg$, the intersection $(V \setminus V') \cap G^n$ is non-empty. By the $\omega$-saturation of $\A$, it follows that we can find a generic point $b'^N$ of $V$ over $k_0(X)\alg$ in $G^n$. Note that $b'^N$ is a realisation of $\qftp(b^N/X)$. For each $i \leq n$, let $(b'^i) := \frac{N}{i} b'^N$. Then $(b'^i)_{i}$ is a realisation of $\qftp((b^i)_{i\leq n}/X)$ in $\A$.

\noindent\textbf{Case 2:} $\X \str \Y$ is a green generic extension.

This amounts to finding an element $b_0 \in \A$ such that $\dd(b_0/X) = 1$, or, equivalently, $b_0 \not\in \cl_{\dd}(X)$.

% Check \ref below
By \ref{g-alg}, we can find a finite dimensional $X' \subset G$ such that $X \subset \acl_{T_{\Alg}}(X')$. Since always $\acl_{T_{\Alg}} \subset \cl_{\dd}$, we also have $\cl_{\dd}(X) \subset \cl_{\dd}(X')$. It therefore suffices to find $b_0 \in \A$ outside $\cl_{\dd}(X')$. Thus, we may assume that $X$ is contained in $G$.

There is a partial type $\Phi(x_0)$ over $X$ expressing the following: $x_0 \in G$; $x_0 \not\in \acl_{T_{\Alg}}(X)$; for each $n \geq 1$, for all $x_1,\dots,x_n \in G$, $\delta(x_0,\dots,x_n/X) > 0$. 

% COMMENT Could explain the "one can see" below
Indeed, for each $n \geq 1$, the condition above can be expressed by a set of formulas in the variables $x_1,\dots,x_n$. To see this one can slightly modify the argument in the proof of \ref{lemm-ss-def}, this time looking at varieties of dimension $\leq \frac{n}{2}$, instead of $<\frac{n}{2}$, which one can see makes no essential difference. Note that here we use our assumption that $X$ is contained in $G$, for it is only in this case that we know how to express the predimension inequalities.

It is now sufficient to find a realisation for the type $\Phi(x_0)$ in $\A$. By the $\omega$-saturation of $\A$, it suffices to show that $\Phi(x_0)$ is finitely satisfiable in $\A$. For this, it is in turn sufficient to show that for each $n \geq 1$, there is a prealgebraic \emph{minimal} extension $\X_n$ of $\X = (X,G\cap X)$ with $\ld(X_n/X) >n$ (and apply the result of Case 1).

Such extensions can be found as follows: For each $n \geq 1$, let $H_{2n,n}(x, c^{*1},\dots,c^{*n})$ be as in (the end of) Remark~\ref{hyper-over-alg} and let $b$ be a generic of $H_{2n,n}(x, c^{*1},\dots,c^{*n})$ over $k_0(X)\alg$. Put $X_n := X + \spank b$ and $G_n := G(X) + \langle b^i : i \geq 1 \rangle$, where $(b^i)_{i \geq 1}$ is a sequence of tuples such that $b^1 = b$ and for all $i,j \geq 1$, $j b^{ij} = b^i$. Then $\X_n = (X_n,G_n)$ is a prealgebraic minimal extension of $\X$ and $\ld(X_n/X) = 2n > n$.

\noindent\textbf{Case 3:} $\X \str \Y$ is an algebraic extension.

Let $b_0$ be any element of $Y \setminus X$. Let $b_0'\in A$ be a root of the minimal polynomial of $b_0$ over $X$. Then $\Y$ is isomorphic to the substructure of $\A$ with domain $Y':= X + \spank b_0'$. Since $\delta(Y') = \delta(Y) = \delta(X)$, $Y'$ is strong in $\A$.

\noindent\textbf{Case 4:} $\X \str \Y$ is a white generic extension.

%%%
It suffices to find an element $b_0$ in $\A$ with $\dd(b_0/X) = 2$. By the argument in Case 2, we can find $b_1 \in A$ with $\dd(b_1/X) = 1$ and $b_2 \in A$ with $\dd(b_2/X b_1) = 1$. Note that, by additivity, $\dd(b_1,b_2/ X) = 2$, 

We shall now find such a $b_0$. 

Let us first deal with the case where $\Alg$ is the multiplicative group. Let $b_0 := b_1 + b_2$ and let us show that indeed $\dd(b_0/X) = 2$. Let us consider two cases:

\noindent\emph{Case 1: $b_1 \in \acl_{T_\Alg}(\scl(X b_0))$.} Then $\dd(b_1/X b_0) = 0$. And also $b_2 \in \acl_{T_\Alg}(\scl(X b_0))$, hence $\dd(b_2/X b_0) = 0$. It follows that $\dd(b_0/X) = \dd(b_1,b_2/X) = 2$.

\noindent\emph{Case 2: $b_1 \not\in \acl_{T_\Alg}(\scl(X b_0))$.} Then $(b_1,b_2)$ is a generic point of the variety defined by $X+Y = b_0$ over $\scl(X b_0)$. Therefore $\delta(b_1,b_2/\scl(X b_0)) = 0$, and hence $\dd(b_1,b_2/X b_0) = 0$. Thus, $\dd(b_0/X) = 2$.

%%%
Let us now show how to do an analogous argument in the elliptic curve case. Since $b_1$ and $b_2$ are independent in the sense of $\cl_{\dd}$, they are also independent in the sense of $\acl_{\Alg}$. In particular, $b_1$ and $b_2$ are not algebraic over the empty set in the sense of $T_{\Alg}$ and therefore lie in the affine part of the elliptic curve $A$ (the unique point at infinity is algebraic). Let us thus write $b_1 = (b_{11}, b_{12})$ and $b_2 = (b_{21}, b_{22})$, with each $b_{ij}$ in $K$. 

By Remark~\ref{g-alg}, we can find $q^*_1,\dots,q^*_4 \in k_0\alg$ such that for any $c\in K$ transcendental over $k_0$, the equation $q^*_1x_1 + q^*_2y_1 + q^*_3x_2 + q^*_4y_2 = c$ defines a rotund subvariety of $A^2$. Define $b_{01} := q^*_1 b_{11} + q^*_2 b_{12} + q^*_3 b_{21} + q^*_4 b_{22}$, and let $b_{02} \in K$ be such that the point $b_0 := (b_{01},b_{02})$ is in $A$. Since $b_1$ and $b_2$ are $\acl_{\Alg}$-independent, $b_{11}$ and $b_{12}$ are algebraically independent over $k_0$. Therefore $b_{01}$ is transcendental over $k_0$. Hence, the equation $q^*_1x_1 + q^*_2y_1 + q^*_3x_2 + q^*_4y_2 = b_{01}$ defines a rotund subvariety of $A^2$ having $(b_1,b_2)$ as a solution. From here one can follow the same argument as in the multiplicative group case (considering two cases, etc.), to show that $\dd(b_0/X) = \dd(b_1,b_2/X) = 2$.
\end{proof}

\noindent\textbf{Rich structures are $\omega$-saturated.}
\begin{Prop} \label{iff} 
The rich structures are precisely the $\omega$-saturated models of $T$.
\end{Prop}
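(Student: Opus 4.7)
The plan has two halves. Lemma~\ref{satmodT-rich} already gives that any $\omega$-saturated model of $T$ is rich, so only the converse---every rich structure is $\omega$-saturated---remains, the model-of-$T$ part being Remark~\ref{rich-modT}.

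To realize a type $p(x)$ over a finite $a \subset \A$ inside $\A$, I would pass to an $\omega$-saturated elementary extension $\A \prec \A^*$, which is itself a model of $T$ and so is rich by Lemma~\ref{satmodT-rich}. By Remark~\ref{RemF}, the family $\F(\A,\A^*)$ of partial isomorphisms between finite-dimensional $\cl_0$-closed strong substructures forms a back-and-forth system, and by Lemma~\ref{bnf} each such map is elementary. Realize $p$ by some $b \in \A^*$ and set $X := \scl^{\A}(a)$ and $Y := \scl^{\A^*}(X b)$; both are finite-dimensional and strong in their respective ambient structures. Granting that $X$ is also strong in $\A^*$, the transitivity part of Lemma~\ref{ss-props} yields $X \str Y$ in $\Fin\CC_0$. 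Combined with $X \str \A$, the richness of $\A$ supplies a strong embedding $j: Y \to \A$ over $X$. Then $j^{-1} \in \F(\A^*,\A)$ is elementary by Lemma~\ref{bnf}, so $b' := j(b) \in \A$ realizes $\tp^{\A^*}(b/X)$ in $\A$, and in particular realizes $p$.

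The whole argument hinges on the sublemma that strongness of a finite-dimensional $\cl_0$-closed $X \subset \A$ persists in $\A^*$. Lemma~\ref{lemm-ss-def} handles the case $X = \spank c$ for a green $\End(\Alg)$-linearly independent tuple $c$: strongness of $\spank c$ is then captured by the universal partial type $\Phi_{\Theta}(c)$, and $\A \prec \A^*$ transfers $\Phi_{\Theta}(c)$ by elementarity. To reach general $X$, I would first pass to $\acl_{T_{\Alg}}(X)$ via Lemma~\ref{lemm-ss-acl}, which preserves strongness, and then use the standing assumption that $X_0$ admits a green $\cl_0$-basis to locate a green $\End(\Alg)$-linearly independent tuple in $\A$ whose span witnesses the strongness of $X$. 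Producing this auxiliary green tuple---so that the universal partial type expressing strongness lives over parameters inside $\A$ and therefore transfers by elementarity---is the main technical obstacle.
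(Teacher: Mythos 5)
Your first half matches the paper exactly. The second half, however, has a genuine gap, and it is exactly the one you flag yourself: your argument needs the identity on $X=\scl^{\A}(a)$ to be a member of $\F(\A,\A^*)$, i.e.\ you need $X$ to remain strong in the elementary extension $\A^*$, and you do not prove this. It is not automatic: strongness is not a first-order property of an arbitrary tuple, and Lemma~\ref{lemm-ss-def} only makes it type-definable (by universal formulas) over tuples of \emph{green} points, whereas a $\cl_0$-basis of $\scl^{\A}(a)$ may contain white elements. The claim can in fact be salvaged with tools from the paper: since $\A\models T$, Remark~\ref{g-alg} gives a finite-dimensional green set with $a$ algebraic over it; using Remark~\ref{ss-checkgreen} one can enlarge it (adding only green witnesses) to a green-based finite-dimensional set $Y_c$ strong in $\A$; by Lemma~\ref{lemm-ss-def} its strongness is expressed by a universal type over its green basis and so transfers to $\A^*$ by elementarity; then $X':=Y_c+\spank(a)$ has $\delta(X'/Y_c)=0$, hence is strong in both $\A$ and $\A^*$, and one runs your amalgamation argument over $X'$ (or deduces $\scl^{\A}(a)\str\A^*$ by transitivity, Lemma~\ref{ss-props}). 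But as written, the ``main technical obstacle'' you name is precisely the missing step, so the proof is incomplete.

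It is worth noting that the paper avoids this obstacle altogether. Its proof takes an $\omega$-saturated elementary extension $\A'$ of the rich structure $\A$ (a model of $T$ by Remark~\ref{rich-modT}), observes that $\A'$ is rich by Lemma~\ref{satmodT-rich}, so $\A$ and $\A'$ are back-and-forth equivalent, hence $L_{\infty\omega}$-equivalent, by Remark~\ref{RemF} and Lemma~\ref{bnf}, and then quotes the fact that $L_{\infty\omega}$-equivalence preserves saturation. The underlying back-and-forth proof of that fact realises a type over a finite $a\subset A$ by first moving $a$ into $\A'$ with a \emph{forth} step starting from, say, $\id_{X_0}\in\F(\A,\A')$ (note $X_0$ is strong in both structures because both lie in $\CC_0$), realising the transported type there by saturation, and pulling the realisation back with a \emph{back} step; at no point does one need the identity on $\scl^{\A}(a)$ to belong to the system, so the question of transferring strongness of an arbitrary finite-dimensional strong set to $\A'$ never arises.
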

\begin{proof}
By Lemma~\ref{satmodT-rich}, every $\omega$-saturated model of $T$ is rich.

Let us now show that every rich structure is $\omega$-saturated. Let
$\A$ be a rich structure. Let $\A'$ be an $\omega$-saturated
elementary extension of $\A$. By Lemma~\ref{satmodT-rich}, $\A'$ is rich
and, by Lemma~\ref{bnf}, $\A$ and $\A'$ are $L_{\infty\omega}$-equivalent. Since $L_{\infty\omega}$-equivalence preserves $\lambda$-saturation for all infinite cardinals $\lambda$, $\A$ is $\omega$-saturated.
\end{proof}

\noindent\textbf{Completeness of $T$.}

\begin{Prop} \label{complete} 
The theory $T$ is complete.
\end{Prop}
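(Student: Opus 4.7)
The plan is very short, since nearly all the work has been done. Given two models $\A_1$ and $\A_2$ of $T$, I want to conclude $\A_1 \equiv \A_2$. The strategy is to reduce to the case of rich structures via saturated elementary extensions and then invoke back-and-forth equivalence.

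First I would take $\omega$-saturated elementary extensions $\A_1'$ and $\A_2'$ of $\A_1$ and $\A_2$, respectively. By Proposition~\ref{iff}, both $\A_1'$ and $\A_2'$ are rich structures in $\CC_0$ (the hypothesis that they are models of $T$ is inherited from $\A_1$ and $\A_2$).

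Next I would apply Remark~\ref{RemF}: since $\A_1'$ and $\A_2'$ are both rich, the set $\F(\A_1',\A_2')$ of partial isomorphisms between finite-dimensional $\cl_0$-closed strong substructures is a back-and-forth system. This system is nonempty because the prime structure $\A_0$ embeds strongly into every member of $\CC_0$, so the restriction-to-$\A_0$ map gives an element of $\F(\A_1',\A_2')$. By Karp's theorem, $\A_1'$ and $\A_2'$ are $L_{\infty\omega}$-equivalent, and in particular elementarily equivalent.

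Combining with $\A_i \equiv \A_i'$ (by elementary extension) for $i=1,2$, I obtain $\A_1 \equiv \A_1' \equiv \A_2' \equiv \A_2$, so $T$ is complete. There is no real obstacle here: all the substantive content has already been absorbed into the existence of rich structures (Lemma~\ref{rich-exist} together with \ref{ap-cc0}, \ref{uc-cc0}, \ref{ext-cc0}), the axiomatizability by $T$ (\ref{rich-modT} and \ref{satmodT-rich}), and the back-and-forth argument in \ref{RemF}; the proof of Proposition~\ref{complete} is essentially just bookkeeping of these facts.
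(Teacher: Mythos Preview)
Your proposal is correct and follows essentially the same route as the paper: the paper's proof simply cites consistency (from the existence of rich structures) and then says completeness follows immediately from Proposition~\ref{iff} and Remark~\ref{RemF}, which is exactly the argument you have spelled out in detail. One tiny slip: to witness non-emptiness of $\F(\A_1',\A_2')$ you should use the finite-dimensional $\X_0$ rather than $\A_0$ (whose domain $\acl_{T_{\Alg}}(X_0)$ need not be finite-dimensional), but this is inconsequential since the identity on $X_0$ already lies in $\F(\A_1',\A_2')$.
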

\begin{proof}
Firstly note that the existence of rich structures in $\CC_0$ was proved in Section~\ref{sec:existence}, and we therefore know that $T$ is consistent. Completeness follows immediately from Proposition~\ref{iff} and Remark~\ref{RemF}.
\end{proof}

%%%%%%%%%%%%%%%%%%%%%%%%%%%%%%%%%%

\section{Model-theoretic properties} \label{sec:mth}

In this section we prove that the theory $T$ is $\omega$-stable and near model complete. We also calculate $U$-ranks and Morley ranks in the theory $T$. For missing definitions and basic facts from stability theory we refer to Section~1 of \cite{PlGStab} and \cite{PzMT}.

% ASSUMPTION
Throughout this section we work in a monster model $\bar \A = (\bar A,G)$ of $T$ (sufficiently saturated and strongly homogeneous).

\subsection{$\omega$-stability}

Here we shall use Lemma~\ref{bnf}, which is indeed a form of quantifier elimination, to show that the theory $T$ is $\omega$-stable. We follow the proof of Poizat in \cite{PzEq3}, but we note that an application of the Thumbtack Lemma (Theorem~\ref{ThumbtackLemma} below) is necessary for the argument to yield the full result).
% \FIXME{Comment: this is a delicate point, and several people have pointed this out.}

%%%%%%%%%%%%%%%%%%%%%%%%%%%%%%%%%%%%%%%%%%%%%%%%%%%%%%%%%%%%%%%%%%%%%%%%%%%%%%

% \subsection{The Thumbtack Lemma}

In order to state the Thumbtack Lemma we make the following definition.

\begin{Def} \label{Kummergen}
Let $K/k$ be an extension of algebraically closed fields, both extending $k_0$. A tuple $a \subset A = \Alg(K)$ is said to be \emph{Kummer generic over $k$}, if every $\End(\Alg)$-module automorphism of $\spank(\Alg(k) + \langle a \rangle)$ fixing $\Alg(k) + \langle a \rangle$ pointwise is induced by a field automorphism of $k(a)^{\alg}$ over $k_0$.
\end{Def}

The above terminology is taken from \cite{HilsGreenGen}. In fact, the definition here differs from the one in \cite{HilsGreenGen}, Definition 4.1, but our use of the term is legitimized by Fact 4.2 there.

To explain the concept of Kummer genericity, let $a$ be a tuple of elements of $A = \Alg(K)$ and consider a sequence $(a^i : i \geq 1)$ of tuples in $A$ with $a^1 = a$ and $(a^{ij})^i = a^j$ for all $i,j \geq 1$. If $V$ is the locus of $a$ over $k$, then for every $i$, $a^i$ lies in the variety $\frac{1}{i} V$, that is the inverse image of $V$ under the map $x \mapsto ix$ (with respect to the group operation on $\A$). Since, in general, the variety $\frac{1}{i} V$ is not irreducible, the type of $a^i$ over $k$ is, in general, not determined by that of $a$.

However, it is easy to see that $a$ is Kummer generic over $k$ if and only if all sequences $(a^i : i \geq 1)$, with $a^1 = a$ and $(a^{ij})^i = a^j$ for all $i,j \geq 1$, are conjugated by a field automorphism of $k(a)^{\alg}$ over $k_0$. Thus, in other words, $a$ is Kummer generic over $k$ if and only if the type of $a$ over $k$ determines the type over $k$ of any sequence sequence $(a^i : i \geq 1)$ as above.

\begin{Thm}[Thumbtack Lemma] \label{ThumbtackLemma}
Let $\Alg$ be the multiplicative group or an elliptic curve. Let $k$ and $K$ be algebraically closed fields extending $k_0$ with $k \subset K$.

For every $a \in A = \Alg(K)$, there exists a $k_{\Alg}$-linear basis $a'$ of $\spank(\Alg(k) \cup a)$ over $\Alg(k)$ that is Kummer generic over $k$.
\end{Thm}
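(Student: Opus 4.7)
The plan is to derive this statement from the Thumbtack Lemma of Bays, Gavrilovich and Hils \cite{BGH}, which provides essentially the required description of the Galois action on the divisible hull of a finitely generated subgroup of the multiplicative group or an elliptic curve. As a first step, I would reduce to the case where $a$ is a finite tuple: Kummer genericity is determined by how $\End(\Alg)$-module automorphisms of $\spank(\Alg(k) + \langle a' \rangle)$ act on each individual compatible system of divisions $(a'^i)_{i\geq 1}$, and each such compatible system lives in the divisible hull of a finitely generated subgroup, so compactness/direct limits reduce the statement to finite $a$.

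Having fixed a finite $a$, I would replace it by a maximal $k_\Alg$-linearly independent subtuple over $\Alg(k)$; the discarded entries differ from $k_\Alg$-combinations of the kept ones (with coefficients in $\Alg(k)$) only by torsion, and so do not affect the $k_\Alg$-span. The desired basis $a'$ will then be obtained from this independent tuple by an $\End(\Alg)$-linear change of variables and, crucially, by replacing $a$ by an integer multiple $N \cdot a$ for a suitable $N$; both operations leave the $k_\Alg$-span over $\Alg(k)$ unchanged, but multiplying by $N$ is precisely what allows one to absorb a finite-index obstruction to Kummer genericity.

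The core input is then the BGH theorem applied to the finitely generated subgroup $\Gamma \subset A$ generated by $a$ together with a basis for $\spank(\Alg(k))$, which asserts that the image of the absolute Galois group of $k(a)\alg/k_0$ in the natural $\End(\Alg)$-module automorphism group of a divisible hull of $\Gamma$ is open. Multiplying $a$ by the (finite) index of this open image yields a tuple whose own divisible hull is acted on by the full automorphism group, and Kummer genericity is exactly this fullness. The main obstacle is matching conventions: the BGH formulation works with profinite completions and Galois images, whereas the definition here (Definition~\ref{Kummergen}) is phrased directly in terms of $\End(\Alg)$-module automorphisms being induced by field automorphisms, so one has to translate between the two formulations. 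In the multiplicative group case this translation is classical Kummer theory for finitely generated subgroups of $K^*$; the genuinely difficult elliptic curve case rests on Serre's open image theorem together with the Ribet/Bashmakov/Bertrand results on Galois representations extending division points, all of which are already packaged into the BGH statement.
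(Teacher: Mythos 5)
The statement is not proved in the paper at all: it is quoted from the literature, with the multiplicative case attributed to \cite{BZCovers}, the non-CM elliptic case to \cite{Bays}, and the general semiabelian case to \cite{BGH}. So your overall strategy --- reduce to a finite, $k_{\Alg}$-linearly independent tuple and invoke the Bays--Gavrilovich--Hils theorem --- is exactly the route the paper takes, and the reduction steps you describe (discarding dependent coordinates, noting that the $k_{\Alg}$-span over $\Alg(k)$ is unchanged) are fine, as is the remark that the genuinely hard content (Kummer theory, Serre open image, Bashmakov--Ribet--Bertrand) is already packaged into the cited results.

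There is, however, a genuine error in the one step of your sketch that is supposed to do the work: the finite-index obstruction is absorbed by \emph{dividing}, not by multiplying. If the Galois image is an open subgroup of the group of $\End(\Alg)$-module automorphisms of $\spank(\Alg(k)+\langle a\rangle)$ fixing $\Alg(k)+\langle a\rangle$ pointwise, then it contains the subgroup of automorphisms fixing $\Alg(k)+\langle \frac{1}{N}a\rangle$ pointwise for some $N$, so the correct new basis is a choice of $N$-division points $a'$ with $N\cdot a'=a$ (note $\Alg(k)+\langle a'\rangle \supseteq \frac{1}{N}\bigl(\Alg(k)+\langle a\rangle\bigr)$ because $\Alg(k)$ is divisible and contains all torsion); such an $a'$ is still a $k_{\Alg}$-basis of the same span and is Kummer generic. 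Replacing $a$ by $N\cdot a$ goes the wrong way: $\langle N a\rangle \subseteq \langle a\rangle$, so the group of module automorphisms that must be realised by Galois gets \emph{larger}, and Kummer genericity becomes strictly harder (in the locus formulation, $\frac{1}{N}$ of the locus of $N\cdot a$ already splits into $A[N]$-translates). Also, your auxiliary group $\Gamma$ generated by $a$ together with ``a basis for $\spank(\Alg(k))$'' is not finitely generated; what is needed, and what \cite{BGH} provides, is the relative (over the algebraically closed field $k$, equivalently modulo $\Alg(k)$) statement for the finitely generated part coming from $a$ alone.
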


In the multiplicative group case, the above theorem is Theorem 2.3 in \cite{BZCovers} (which builds upon \cite{ZCovers}). For elliptic curves without complex multiplication and defined over a number field $k_0$, it is case $N=1$ of Lemma 4.2.1.iii in \cite{Bays}. The result holds for arbitrary semiiabelian varieties by a theorem of Bays, Gavrilovich and Hils (\cite{BGH}[Theorem 1.1]), which in fact follows from a version by the same authors in the more general context of groups of finite Morley rank (\cite{BGH}[Theorem 6.4]).

%%%%%%%%%%%%%%%%%%%%%%%%%%%%%%%%%%%%%%%%%%%%%%%%%%%%%%%%%%%%%%%%%%%%%%%%%%%%%%

% \begin{Rem} \label{sscl-ex-size}
% Note that the strong closure of any subset $B$ of a model $\A$ of $T$ (as defined in \ref{sec:sscl}) has cardinality $|B|+\aleph_0$.
% \end{Rem}

\begin{Lemm} \label{scl-acl}
For every subset $B$ of $\bar \A$, $\acl_T(B) = \acl_{T_\Alg}(\scl(B))$.

Moreover, every algebraically closed subset of $\bar \A$ is strong.
\end{Lemm}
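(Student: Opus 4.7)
The plan is to establish the two inclusions of the equality separately, and then derive the moreover part from the equality. Both closure operators involved have finite character, so I reduce at once to the case of finite $B$, in which case $\scl(B)$ has finite $\cl_0$-dimension and, by Lemma~\ref{lemm-ss-acl}, is automatically $\acl_{T_\Alg}$-closed.

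For the inclusion $\acl_T(B) \subset \acl_{T_\Alg}(\scl(B))$, let $b \notin \acl_{T_\Alg}(\scl(B))$ and set $Y := \scl(B)$. Since $b$ is field-transcendental over $Y$ we have $\trd(b/Y) \geq 1$, while the increment of the green linear dimension contributes at most $1$ to the negative term of $\delta$; thus $\delta(b/Y) \geq 1$. It follows that $Y \str \scl(Yb)$ is a non-trivial strong extension whose first minimal step, in the sense of Lemma~\ref{min-exts}, is of green generic or white generic type. Because $\bar \A$ is rich by Proposition~\ref{iff} (being $\omega$-saturated) and $Y$ is strong in $\bar \A$, richness combined with the back-and-forth system $\F(\bar\A,\bar\A)$ produces infinitely many realizations of $\tp_T(b/Y)$ in $\bar \A$; hence $\tp_T(b/B)$ has infinitely many realizations, forcing $b \notin \acl_T(B)$.

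For the reverse inclusion, the containment $\acl_{T_\Alg}(C) \subset \acl_T(C)$ (valid for any $C$, since $T$ extends $T_\Alg$) reduces the task to showing $\scl(B) \subset \acl_T(B)$. Take $b \in \scl(B)$. The predimension $\delta$, and hence $\scl$, is $\Aut(\bar \A)$-invariant, as it is built from $\trd$, $\cl_0$ and the predicate $G$; consequently, for every $\sigma \in \Aut(\bar \A/B)$ we have $\sigma(\scl(B)) = \scl(\sigma B) = \scl(B)$, so the orbit of $b$ stays inside the finite-dimensional set $\scl(B)$. To conclude that this orbit is finite, I invoke Lemma~\ref{bnf}: because $\bar \A$ is rich and $\scl(B)$ is a strong substructure of $\bar\A$, for any $b' \in \scl(B)$ the type $\tp_T(b'/B)$ is determined by the $L$-isomorphism type, over $B$, of the strong substructure $\scl(Bb')$ sitting inside $\scl(B)$; since $\scl(B)$ has bounded $\cl_0$-dimension and is $\acl_{T_\Alg}$-closed, only finitely many such isomorphism types occur and within each class $b'$ ranges over finitely many field conjugates. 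I expect the principal obstacle to be precisely this last finiteness statement, which forces one to combine the rigidity of $\acl_{T_\Alg}$ inside a finite-dimensional strong closure with the canonicity of $\scl(B)$ over $B$.

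Finally, the moreover part follows at once: setting $C := \acl_T(B)$ and using $\acl_T(C) = C$, the main equality applied to $C$ gives $C = \acl_{T_\Alg}(\scl(C)) \supset \scl(C)$; combined with the trivial inclusion $C \subset \scl(C)$ from extensivity, this yields $C = \scl(C)$, so $C$ is both $\cl_0$-closed and strong.
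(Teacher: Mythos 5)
Your outline of the statement is right, but both halves contain genuine gaps. For the inclusion $\acl_T(B) \subset \acl_{T_\Alg}(\scl(B))$, the step from $\delta(b/Y)\geq 1$ to ``the first minimal step of $Y \str \scl(Yb)$ is green or white generic'' is a non sequitur: $\delta(b/Y)$ is the predimension of $\spank(Yb)$ over $Y$, not $\dd(b/Y)=\delta(\scl(Yb)/Y)$, and the latter can be $0$ even though $b\notin\acl_{T_\Alg}(Y)$ --- for instance when $b$ lies in the span of a prealgebraic minimal extension of $Y$ without being algebraic over $Y$. In that case $Y\str\scl(Yb)$ decomposes into prealgebraic steps only, and your mechanism for producing infinitely many realisations (genericity of the extension) simply does not apply; moreover even in the generic case the phrase ``richness combined with the back-and-forth system produces infinitely many realisations'' is not an argument. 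The paper avoids any case distinction: for each $n$ it forms the free amalgam $D_n$ of $n$ copies of $D=\scl(Bb)$ over $C=\scl(B)$, embeds it strongly into $\bar\A$ over $C$ by richness (using the asymmetric amalgamation Lemma~\ref{aap-cc0}), observes that the copies of $b$ are pairwise distinct because the copies of $D$ are $\acl_{T_\Alg}$-independent over $C$ and $b\notin\acl_{T_\Alg}(C)$, and that they all realise $\tp(b/B)$ because each copy of $D$ is strong in $\bar\A$ (Lemma~\ref{bnf}). This works uniformly, whatever $\dd(b/B)$ is.

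For the other inclusion, reducing to $\scl(B)\subset\acl_T(B)$ and noting that the $\Aut(\bar\A/B)$-orbit of $b$ stays inside $\scl(B)$ is exactly the paper's start, but the finiteness of that orbit is the whole content, and your proposed justification --- finitely many $L$-isomorphism types of $\scl(Bb')$ over $B$ and finitely many ``field conjugates'' within each class --- is unsubstantiated and in fact false as stated: a finite-dimensional $\cl_0$-closed set has infinitely many $\cl_0$-closed subsets, and an element of $\scl(B)$ may well be transcendental over $B$ (again, think of a prealgebraic extension), so it has infinitely many $T_\Alg$-conjugates over $B$; you yourself flag this as the ``principal obstacle''. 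The missing step has a short resolution in the paper: all $B$-conjugates of $b$ lie in $\scl(B)$, which is countable (it has finite $\cl_0$-dimension over the finite-dimensional $X_0$), so $\tp(b/B)$ has at most countably many realisations, and by saturation of the monster a non-algebraic type over a finite set would have uncountably many; hence $b\in\acl_T(B)$. Your derivation of the ``moreover'' clause from the main equality is fine (the paper instead quotes Lemma~\ref{lemm-ss-acl}), but as it stands the two main inclusions are not proved.
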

\begin{proof}
For the first part, it is sufficient to show that the equality holds for finite $B$. Thus, fix a finite $B \subset \bar \A$

($\supset$) Let $a$ be a $\cl_0$-basis of $\scl(B)$. It suffices to show that $a \subset \acl_T(B)$. Every automorphism of $\bar \A$ fixing $B$ pointwise fixes $\scl(B)$ set-wise. Thus, all conjugates of $a$ over $B$ are contained in $\scl(B)$, and hence there are at most countably many of them. By the saturation of $\bar \A$, we get that in fact there must be only finitely many. This shows that every element in $a$ is algebraic over $B$.

% Alternative argument: suppose $a$ is not in $\acl(B)$. Let $p$ be the type of $a$ over $\acl^{\text{eq}}(B)$. The type $p$ is stationary. Let $\bar b$ be an infinite Morley sequence in $p$. This gives an infinite descending chain of values of $\delta$, namely $\delta((\bar b_j)_{j<i})$. Contradiction.

($\subset$) Let $a$ be an element of $\bar A \setminus \acl_{\Alg}(\scl(B))$ and let us show that $a$ is not in $\acl_T(B)$.

Let $C = \scl(B)$ and $D = \scl(aB)$. For each $n \geq 1$, let $D_n$ be the free amalgam of $n$ isomorphic copies of $D$ over $C$. By the richness of $\bar \A$, each $D_n$ embeds strongly into $\bar \A$ over $C$. The different copies of $a$ in each $D_n$ have all the same type, this is because the different copies of $D$ are strong in $D_n$ and hence are strongly embedded in $\bar \A$, so the isomorphisms between them are elementary maps. %REF "Lemm 2.3.8"
It follows that the type of $a$ over $B$ has infinitely many realisations in $\bar \A$. Therefore $a$ is not in $\acl_T(B)$.

The second part of the statement follows immediately from the first and Lemma~\ref{lemm-ss-acl}.
\end{proof}

\begin{Lemm} \label{eq-acl}
For all sequences $a,a'\subset \bar A$, possibly infinite, the following are equivalent:
\begin{enumerate}
\item $\tp(a) = \tp(a')$,
\item The map $a \mapsto a'$ extends to an $L$-isomorphism from $\acl_T(a)$ onto $\acl_T(a')$.
\item The map $a \mapsto a'$ extends to an $L$-isomorphism from $\scl(a)$ onto $\scl(a')$.
\item The map $a \mapsto a'$ extends to an $L$-isomorphism between strong subsets of $\bar \A$,
\end{enumerate}
\end{Lemm}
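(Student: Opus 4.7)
The plan is to establish the four conditions as a cycle $(1) \Rightarrow (2) \Rightarrow (3) \Rightarrow (4) \Rightarrow (1)$. For $(1) \Rightarrow (2)$, I would invoke the strong homogeneity of the monster $\bar{\A}$: equality of types yields an automorphism of $\bar{\A}$ sending $a$ to $a'$, whose restriction to $\acl_T(a)$ is the required $L$-isomorphism. The implication $(3) \Rightarrow (4)$ is immediate from the definition of $\scl$, since $\scl(a)$ and $\scl(a')$ are strong subsets of $\bar{\A}$. For $(4) \Rightarrow (1)$, I would apply Lemma \ref{bnf}: an $L$-isomorphism between strong $\cl_0$-closed subsets of rich structures is an elementary map, which forces $\tp(a) = \tp(a')$.

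The substantive step is $(2) \Rightarrow (3)$. Given an $L$-isomorphism $f \colon \acl_T(a) \to \acl_T(a')$ extending $a \mapsto a'$, I would verify that $f$ restricts to an isomorphism $\scl(a) \to \scl(a')$ by showing that each ingredient defining $\scl$ is preserved by $f$. The $\End(\Alg)$-module operations on $\Alg$ are $L_\Alg$-definable (their graphs are among the subvarieties named in $L$), so the closure operator $\cl_0$, being type-definable via $\End(\Alg)$-linear relations modulo torsion, is preserved. By Lemma \ref{scl-acl} both $\acl_T(a)$ and $\acl_T(a')$ are $T_\Alg$-algebraically closed, hence models of $T_\Alg$; since $T_\Alg$ has quantifier elimination, $f$ is elementary as an $L_\Alg$-map and therefore preserves transcendence degrees. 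Consequently it preserves the predimension $\delta = 2\trd - \ld$. Invoking Lemma \ref{scl-acl} once more, $\acl_T(a)$ is itself strong in $\bar{\A}$, so by transitivity of $\str$ (Lemma \ref{ss-props}) the notions ``strong in $\acl_T(a)$'' and ``strong in $\bar{\A}$'' agree on subsets of $\acl_T(a)$. Therefore $f(\scl(a))$ is a strong $\cl_0$-closed subset of $\bar{\A}$ containing $a'$, and applying the symmetric argument to $f^{-1}$ yields the reverse inclusion and hence $f(\scl(a)) = \scl(a')$.

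The main obstacle is precisely the verification in $(2) \Rightarrow (3)$ that the auxiliary notions $\cl_0$, $\delta$, and $\scl$ are captured by the $L$-structure and therefore transported by $L$-isomorphisms; the passage between ``strong in the substructure'' and ``strong in $\bar{\A}$'' via the algebraicity of $\acl_T$ is what makes this work. Once this is secured, the other three implications follow directly from strong homogeneity, the definition of $\scl$, and the back-and-forth lemma.
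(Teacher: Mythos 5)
Your proposal is correct and follows essentially the same route as the paper: the same cycle $(1)\Rightarrow(2)\Rightarrow(3)\Rightarrow(4)\Rightarrow(1)$, with $(1)\Rightarrow(2)$ by homogeneity, $(3)\Rightarrow(4)$ by definition of $\scl$, and $(4)\Rightarrow(1)$ via Lemma~\ref{bnf}. Your treatment of $(2)\Rightarrow(3)$ is just a more explicit version of the paper's argument, which likewise uses Lemma~\ref{scl-acl} to note that $\acl_T(a)$ is strong in $\bar\A$ and that $\scl(a)$ is therefore characterised intrinsically inside the $L$-structure $\acl_T(a)$, hence transported by the isomorphism.
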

\begin{proof}
That (1.) implies (2.) holds for arbitrary first-order theories, by an easy argument.

To see that (2.) implies (3.), note that any isomorphism from $\acl_T(a)$ onto $\acl_T(a')$ sending $a$ to $a'$ has to map $\scl(a)$ onto $\scl(a')$. Indeed, to see this simply notice that, since $\acl_T(a)$ is strong in $\bar \A$ (by Lemma~\ref{scl-acl}), $\scl(a)$ is the intersection of all finite dimensional $\cl_0$-closed subsets $B$ of $\acl_T(a)$ that are strong in $\acl_T(a)$, and similarly for $a'$.

It is clear that (3.) implies (4.).

The implication from (4.) to (1.) follows immediately from Lemma~\ref{bnf}.
\end{proof}

\begin{Thm} \label{Stab}
The theory $T$ is $\omega$-stable.
\end{Thm}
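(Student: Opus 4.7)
The plan is to count types over a countable parameter set $B \subset \bar{A}$ and show $|S_n(B)| \leq \aleph_0$ for every $n$. The key tool is Lemma~\ref{eq-acl}, which says that a type $\tp(a/B)$ is determined by the $L$-isomorphism class of any strong $\cl_0$-closed subset containing $aB$, taken over $B$. Since $|S_n(B)| \leq |S_n(\scl(B))|$ and $\scl(B)$ is still countable, I would first replace $B$ by $\scl(B)$ and assume $B$ is strong. For $a \models p \in S_n(B)$, put $D := \scl(aB)$; strongness of $B$ together with the localised form of the strong-closure construction gives $\ld(D/B) < \omega$, and $D$ is countable.

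Next I would choose a $\cl_0$-basis $(c,d)$ of $D$ over $B$ where $c \in G^k$ is a $k_\Alg$-basis of $(D \cap G)/(B \cap G)$ and $d$ consists of white points, and apply the Thumbtack Lemma (Theorem~\ref{ThumbtackLemma}) to replace $(c,d)$ by a Kummer generic basis of the same $\cl_0$-span. This keeps the green/white split because any $k_\Alg$-linear combination of green points is green modulo torsion, and all torsion lies in $G$. By Lemma~\ref{lemm-ss-acl}, strongness of $D$ yields $D \cap G = (B \cap G) + \la c^i : i \geq 1 \ra$ for some coherent division sequence $(c^i)$, and Kummer genericity of $c$ is the precise statement that the $L_\Alg$-type of the entire tree $(c^i)_{i \geq 1}$ over $B$ is already determined by $\tp_{T_\Alg}(c/B)$.

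The claim is then that $\tp(a/B)$ is determined by the finite numerical data $(k, m)$ (with $m = k + |d|$) together with the $L_\Alg$-type $\tp_{T_\Alg}(cda/B)$. Given a second configuration $(c', d', a^\circ)$ in $\bar{\A}$ matching the same data, Kummer genericity produces an $L_\Alg$-automorphism of $\bar{A}$ over $B$ sending $(cda) \mapsto (c'd'a^\circ)$ and matching the two division trees $(c^i) \mapsto (c'^i)$; restricted to $D$ this is an $L_\Alg$-isomorphism $D \to D'$ mapping $D \cap G = (B \cap G) + \la c^i \ra$ onto $D' \cap G = (B \cap G) + \la c'^i \ra$, hence an $L$-isomorphism, and Lemma~\ref{eq-acl} yields $\tp(a/B) = \tp(a^\circ/B)$. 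Since $T_\Alg$ is strongly minimal, hence $\omega$-stable, the Stone space of $T_\Alg$-types of finite tuples over countable $B$ is countable; summing over the countably many values of $(k,m)$ and the finite length of $a$ gives $|S_n(B)| \leq \aleph_0$.

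The main obstacle is precisely the step where Kummer genericity is invoked. Without it, the $L_\Alg$-isomorphism between $D$ and $D'$ produced from matching the algebraic types of $(c,d)$ and $(c',d')$ need not send $(c^i)$ onto $(c'^i)$: the two sequences can differ by a nontrivial coherent torsion sequence, in which case $\la c^i \ra$ and $\la c'^i \ra$ are genuinely different subgroups of the common algebraic extension and $\sigma(D \cap G) \neq D' \cap G$, so $\sigma$ fails to be an $L$-isomorphism. The Thumbtack Lemma is exactly what forces the $L_\Alg$-conjugacy of the division trees once their bases are conjugate, and this is why it — and not merely the Weak CIT used earlier — is needed in order to get the full $\omega$-stability result when torsion is allowed in $G$.
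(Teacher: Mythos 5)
Your argument is essentially the paper's own proof of Theorem~\ref{Stab}: reduce, via Lemma~\ref{bnf}/Lemma~\ref{eq-acl}, to counting isomorphism types over a strong base of the finite-dimensional sets $D=\scl(Ba_0)$; pick a basis of $D$ over the base together with a coherent division tree; invoke the Thumbtack Lemma so that the $L_{\Alg}$-type of the finite basis determines the $L_{\Alg}$-type of the whole tree; and then count $T_{\Alg}$-types (you do this for countable bases, the paper for all infinite $\lambda$ -- either suffices). Your closing paragraph also identifies correctly why the Thumbtack Lemma, and not Weak CIT, is the input that handles torsion in $G$. Two points in your write-up, however, need repair.

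First, the justification of the step ``replace $(c,d)$ by a Kummer generic basis of the same $\cl_0$-span; this keeps the green/white split'' does not work as stated. It is false that all torsion lies in $G$: the construction only guarantees $\Tor G=\Tor G_0$, which may be trivial while $\Tor A$ is the full torsion of the algebraic group. Moreover, Theorem~\ref{ThumbtackLemma} as stated produces \emph{some} Kummer generic basis of $\spank(B\cup cd)$ over $B$, with no control over whether its members are green or white; if the new basis is not green-adapted, the identity $D\cap G=(B\cap G)+\la c^i:i\geq 1\ra$, on which your matching argument (and the finiteness of the extra data beyond the $L_{\Alg}$-type) rests, is no longer available. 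The repair is to arrange Kummer genericity for a basis that is still adapted to the colouring, e.g.\ by taking it to be a suitable division $(c^m,d^m)$ of your adapted basis: divisions of the white coordinates are automatically white (an element whose $m$-th multiple is white cannot be green), and the divisions of the green $c$ may be chosen inside $G$ since $G$ is divisible; note that the paper's own ``we may assume $a$ is Kummer generic'' elides exactly this compatibility issue, so your instinct to address it is right even though the justification given is wrong. Second, Kummer genericity and the Thumbtack Lemma are formulated over $\Alg(k)$ with $k$ algebraically closed, so you should pass to $\acl_T(B)=\acl_{T_{\Alg}}(\scl(B))$ (still countable) rather than merely to $\scl(B)$; and the identity $D\cap G=(B\cap G)+\la c^i\ra$ follows from the choice of $c$ as a green spanning set together with divisibility of $G$ and of $B\cap G$ and the fact that $\Tor A\subset B$, not from Lemma~\ref{lemm-ss-acl}, which says something different. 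With these adjustments your proof coincides with the paper's.
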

\begin{proof}
Let $\lambda$ be an infinite cardinal. We shall see that for all $B \subset \bar \A$ with $|B| \leq \lambda$, there are no more than $\lambda$ complete 1-types over $B$; thus showing that $T$ is $\lambda$-stable for all infinite $\lambda$, i.e. that it is $\omega$-stable.

Let $B \subset \bar A$ be of cardinality $\lambda$. By passing to the algebraic closure $\acl_T(B)$ of $B$, we may assume that $B$ is a strong subset of $\bar \A$ of the form $\Alg(K)$ for some algebraically closed subfield $K$ of $\bar K$, where $\bar K$ is an algebraically closed field such that $\bar A = \Alg(\bar K)$.

Consider an arbitrary element $a_0$ in $\bar A$. Recall that by Lemma~\ref{eq-acl}, the type of $a_0$ over $B$ is determined by the isomorphism type of $\scl(Ba_0)$ over $B$. 

Let $a$ be a $\cl_0$-basis of $\scl(Ba_0)$ over $B$. Note that $a$ is a finite tuple: working over (the \emph{strong} set) $B$, the set $\scl(Ba_0)$ is the strong closure of a finite set and is thus finite dimensional (over $B$). Suppose $(a^i)_{i\geq 1}$ is a sequence of tuples such that (1) $a^1 = a$ and for all $i,j \geq 1$, $j a^{ij} = a^i$, and such that (2) if a coordinate $a_j$ of $a$ is in $G$, then for all $i \geq 1$, $a^i_j$ is also in $G$. Then the isomorphism type of $\scl(Ba_0)$ over $B$ is determined by the quantifier-free type of the sequence $(a^i)_{i\geq 1}$ over $B$.
% note: converse of (2) is obviously true

Thus, there are at most as many possibilities for the type of the element $a_0$ over $B$ as possible quantifier-free types over $B$ of sequences $(a^i)_{i \geq 1}$ with the above properties. Let us now, at first, see that there are at most $\lambda \cdot 2^{\aleph_0}$ possibilities for the quantifier-free type of such sequence $(a_i)_{i \geq 1}$ over $B$. Indeed, if we fix the algebraic type of $a$ over $B$ then then there are only finitely many possibilities for the algebraic type of each $a^i$ over $B$. Therefore there are at most $\lambda \cdot 2^{\aleph_0}$ possibilities for the algebraic type of the sequence $(a^i)_{i\geq 1}$ over $B$. Also, there are only finitely many possibilities for the colouring of the tuple $a$, which determines the colouring of the sequence $(a^i)_{i \geq 1}$. Thus, there are no more than $\lambda \cdot 2^{\aleph_0}$ possibilities for the quantifier-free type of such a sequence $(a^i)_{i \geq 1}$. This shows that the theory $T$ is $\lambda$-stable for all $\lambda \geq 2^{\aleph_0}$ and hence superstable.

%%%%%%%%

Let may now improve on the above to see that, in fact, the theory $T$ is $\omega$-stable. Indeed, by the Thumbtack Lemma (Theorem~\ref{ThumbtackLemma}), we may assume without loss of generality that $a$ is Kummer generic over $B$. Thus, the algebraic type of $a$ over $B$ determines the algebraic type of the whole sequence $(a^i)_{i \geq 1}$. Therefore there are at most $\lambda$ possible quantifier-free types over $B$ of sequences $(a^i)_{i \geq 1}$ as above. This shows that $T$ is $\omega$-stable.
\end{proof}

\subsection{Near model completeness}

Let $L^*$ be the expansion of the language $L$ by a predicate for each existentially definable set in $L$. As with any extension of the language by definable predicates, $T$ extends canonically to a complete $L^*$-theory $T^*$. Recall that the $L$-theory $T$ is said to be \emph{near model complete} if $T^*$ has quantifier elimination.

%%%%%%%%%%

The following proposition isolated a sufficient condition for near completeness of the theory $T$.

\begin{Prop} \label{nearmodelcomplete}
Assume the following property holds: For all $a\subset \A \models T$, there exists an existential $L$-formula $\tau_a^\delta(x)$ such that
\begin{itemize}
\item $\A \models \tau_a^\delta(a)$, and
\item for all $a' \subset \A' \models T$, if $\A' \models \tau_a^\delta(a')$ then $\delta(a') \leq \delta(a)$.    
\end{itemize}

Then:
\begin{enumerate}
\item For all $a\subset \A \models T$, there exists an existential $L$-formula $\tau_a^{\dd}(x)$ such that
\begin{itemize}
\item $\A \models \tau_a^{\dd}(a)$, and,
\item for all $a' \subset \A' \models T$, if $\A' \models \tau_a^{\dd}(a')$ then $\dd(a') \leq \dd(a)$.    
\end{itemize}
\item %Wagner-closedness?
For all $n \geq 1$, for all $r \geq 0$, there is a set $\Phi_{n,r}(x)$ of existential $L$-formulas such that for all $\A \in \CC$ and all $a \in A^n$,
\[
\delta(a) \leq r \iff \A \models \bigvee \Phi_{n,r}(a).
\]
\item For all $n \geq 1$, for all $r \geq 0$, there is a set $\Psi_{n,r}(x)$ of existential $L$-formulas such that for all $\A \in \CC$ and all $a \in A^n$,
\[
\dd(a) \leq r \iff \A \models \bigvee \Psi_{n,r}(a).
\]
\item For all models $\A_1, \A_2$ of $T^*$, every finite partial $L^*$-isomorphism from $\A_1$ to $\A_2$ preserves the dimension function $\dd$.
\item For all $\omega$-saturated models $\A_1,\A_2$ of $T^*$, every finite partial $L^*$-isomorphism from $\A_1$ to $\A_2$ extends to a member of $\F(\A_1,\A_2)$.
\item $T$ is a near model complete $L$-theory.
\end{enumerate}
\end{Prop}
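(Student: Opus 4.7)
The plan is to prove (1)--(6) in the stated order, each following from the previous. For (1), take a finite tuple $x'$ with $a \subset x'$ and $\delta(x') = \dd(a)$, which exists by the very definition of $\dd$, and set
\[
\tau_a^{\dd}(x) := \exists y\, \tau_{x'}^{\delta}(x,y),
\]
where $y$ lists the coordinates of $x' \setminus a$. Then $\A \models \tau_a^{\dd}(a)$ is witnessed by $x' \setminus a$; any $a' \subset \A'$ satisfying $\tau_a^{\dd}$ admits a witness $b'$ with $\delta(a'b') \leq \delta(x') = \dd(a)$ by the hypothesis, forcing $\dd(a') \leq \delta(a'b') \leq \dd(a)$.

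For (2) and (3), take $\Phi_{n,r}(x) := \{\tau_a^{\delta}(x) : a \text{ an } n\text{-tuple in some model of } T,\ \delta(a) \leq r\}$, and likewise $\Psi_{n,r}(x)$ from the $\tau_a^{\dd}$ constructed above. One direction is trivial (each $a$ realises its own formula) and the other is exactly the defining property of $\tau_a^{\delta}$ and $\tau_a^{\dd}$. Part (4) is now immediate: each $\psi \in \Psi_{n,r}$ is existential in $L$ and therefore atomic in $L^*$, and finite partial $L^*$-isomorphisms preserve atomic formulas in both directions, so $\dd(a) \leq r \iff \dd(f(a)) \leq r$, yielding $\dd(a) = \dd(f(a))$.

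The main technical step is (5). Given a finite partial $L^*$-isomorphism $g \colon a \mapsto a'$ between $\omega$-saturated models $\A_1, \A_2$ of $T^*$, enlarge $a$ to a finite tuple $c$ with $\spank c = \scl(a)$, so that $\delta(c) = \dd(a)$. The quantifier-free $L^*$-type of $c$ over $a$, transported through $g$, is finitely satisfiable in $\A_2$: any finite conjunction of atomic $L^*$-formulas $\Phi(x,a)$ realised by $c$ yields an existential $L$-formula $\exists x\, \Phi(x,a)$, which is atomic in $L^*$ and hence preserved by $g$. By $\omega$-saturation of $\A_2$, realise this type by some $c'$, so that $\tilde{g} := g \cup (c \mapsto c')$ is again a partial $L^*$-iso. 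By (4), $\dd(c') = \dd(c) = \dd(a) = \dd(a')$, while preservation of the $L$-quantifier-free type gives $\delta(c') = \delta(c) = \dd(a)$. Combining, $\delta(c') = \dd(c')$, and this equality, by the very definition of $\dd$, says that no finite enlargement of $c'$ can decrease its $\delta$-value --- precisely strongness of $\spank c'$ in $\A_2$. Hence $\tilde{g} \in \F(\A_1, \A_2)$. This is the crux of the whole argument: the hypothesis on $\tau_a^{\delta}$ is exactly what lets the saturation-produced tuple inherit strongness.

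For (6), we invoke Robinson's criterion: $T^*$ has quantifier elimination if and only if every finite partial $L^*$-isomorphism between $\omega$-saturated models of $T^*$ extends to a full $L^*$-isomorphism. By Proposition~\ref{iff} and Remark~\ref{RemF}, the $\omega$-saturated models of $T^*$ are the rich structures and $\F(\A_1, \A_2)$ is a back-and-forth system whose members are partially elementary by Lemma~\ref{bnf}, hence $L^*$-preserving. Applying (5) extends the given finite partial iso to a member of $\F(\A_1, \A_2)$; iterating the back-and-forth step of $\F$ one element at a time then yields the desired full $L^*$-isomorphism, establishing quantifier elimination for $T^*$ and thus near model completeness of $T$.
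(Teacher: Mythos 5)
Parts (1)--(4) of your proposal are correct and essentially identical to the paper's own argument. The problem is in (5), and it is a genuine gap, in two places. First, your finite-satisfiability argument only covers \emph{positive} atomic $L^*$-formulas, while the quantifier-free $L^*$-type of $c$ over $a$ also contains negated $L^*$-atomic formulas; for a fragment $\Phi(x,a)\wedge\neg\Psi(x,a)$ the witness statement $\exists x\,(\Phi\wedge\neg\Psi)$ is, read in $L$, of the form $\exists(\exists\wedge\forall)$, hence not an $L^*$-predicate and not preserved by $g$. So either you realise only the positive part, in which case $\tilde g$ is not known to be a partial $L^*$-isomorphism (so (4) cannot be applied to it and the chain $\dd(c')=\dd(c)$, $\delta(c')=\delta(c)$ breaks down), or you intend the full quantifier-free $L^*$-type, whose finite satisfiability you have not justified. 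The paper sidesteps this by realising the quantifier-free \emph{$L$}-type: its literals are quantifier-free $L$-formulas, so the existential closure of any finite fragment is an existential $L$-formula, i.e.\ an $L^*$-atomic fact about $a$, preserved by $g$. (Note also that preservation of the quantifier-free $L$-type of the finite tuple $c$ alone does not give $\delta(c')=\delta(c)$: which division points of combinations of $c$ are green is not recorded there; the paper's bookkeeping goes through $\dd$ and part (4) instead.)

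Second, and more fundamentally, $\F(\A_1,\A_2)$ consists of isomorphisms between finite-dimensional $\cl_0$-closed strong subsets, so the map you must produce is defined on all of $\spank(c)$, in particular on every division point of $c$. Your $\tilde g$ is a map between finite tuples, and the quantifier-free type of a finite tuple does not determine the greenness or the algebraic relations of its division points, so $c\mapsto c'$ need not extend to an $L$-isomorphism $\spank(c)\to\spank(c')$; this is exactly the Kummer-genericity issue that the paper has to confront elsewhere (Lemma \ref{eq-acl}, the Thumbtack Lemma). For this reason the paper takes $b_1$ to be an enumeration of the whole (infinite, but finite-dimensional and $\cl_0$-closed) set $\scl(a_1)$ and realises its quantifier-free $L$-type over $a_2$ by $\omega$-saturation (countably many variables over a finite parameter set); this produces a map whose domain and image are already $\cl_0$-closed, with $\delta(b_2)=\delta(b_1)=\dd(a_1)=\dd(a_2)$, whence strongness of the image and membership in $\F(\A_1,\A_2)$. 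Finally, in (6) the criterion you invoke is too strong: a finite partial map between $\omega$-saturated models of possibly different cardinalities cannot be extended to a full $L^*$-isomorphism; what is needed, and what the paper uses, is only that finite partial $L^*$-isomorphisms are $L$-elementary (hence $L^*$-elementary), which follows from (5) together with Lemma \ref{bnf}.
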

\begin{proof}
\begin{enumerate}
\item It is easy to see that we can take $\tau_a^{\dd}(x)$ to be $\exists y \tau_{(a,b)}^{\delta}(x,y)$ where $b$ is such that $(a,b)$ is a $\cl_0$-basis of the strong closure of $a$.
\item Given $n,r$, put
\[
\Phi_{n,r}(x) = \{\tau_a^\delta(x) : \A \models T, a \in A^n, \delta(a) \leq r\}
\]
\item Given $n,r$, put
\[
\Psi_{n,r}(x) = \{\tau_a^{\dd}(x) : \A \models T, a \in A^n, \dd(a) \leq r\}
\]
\item Immediate from the previous part.
\item Let $f:a_1 \mapsto a_2$ be a partial $L^*$-isomorphism from $\A_1$ to $\A_2$. Let $b_1$ be an enumeration of $\scl(a_1)$. Let $\Theta(x,y)$ be the quantifier-free $L$-type of $a_1 b_1$. Notice that since $f$ is an $L^*$-partial isomorphism, $\Theta(a_2,y)$ is finitely satisfiable in $\A_2$. Therefore, by the $\omega$-saturation of $\A_2$, there exists a realisation $b_2$ in $A_2$ of $\Theta(a_2,y)$. We thus have an $L$-partial isomorphism $\hat{f}: b_1 \mapsto b_2$ extending $f$.

Also,
\[
\delta(b_2) = \delta(b_1) = \dd(a_1) = \dd(a_2),
\]
where the last equality holds by the previous part of this lemma; hence $b_2$ is strong in $\A_2$. Therefore, $\hat{f}$ is in $\F(\A_1,\A_2)$.
\item It follows easily from the previous part and Lemma~\ref{bnf} that finite partial $L^*$-isomorphisms are $L$-elementary maps, and hence $L^*$-elementary maps. Thus, $T^*$ has quantifier elimination, i.e. $T$ is near model complete.
\end{enumerate}
\end{proof}

%%%%%%%%%%
We shall now show that the sufficient condition for near model completeness found in Prop~\ref{nearmodelcomplete} is satisfied by the theory $T$. Here we follow Lemma 10.3 in \cite{BadField}. 

% COMMENT: how is this in between the standard consequences of WCIT and CIT?
\begin{Lemm} \label{semidef-generic}
Let $V(x,y)$ be a subvariety of $\Alg^{n+k}$ defined over $k_0$. There exists a quantifier-free $L_{\Alg}$-formula $\phi(x,y)$ such that for every $b \in \bar A^k$ such that $V_b$ is a prealgebraic minimal rotund subvariety of $\bar A^n$ the following holds:
\begin{itemize}
\item every generic point $a$ of $V_b$ over $b$ satisfies $\phi(x,b)$, and
\item for every $a \models \phi(x,b)$ and every strong $\cl_0$-closed set $B$ containing $b$, either $a \subset B$ or $a$ is a generic point of $V_b$ over $B$.
\end{itemize}
\end{Lemm}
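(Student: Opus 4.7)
The plan is to define $\phi(x,y)$ by conjoining $V(x,y)$ with finitely many negations of ``exceptional fibre locus'' subvarieties arising from a Weak CIT application to the family $V$. First, apply Theorem~\ref{weakcit} to $V(x,y) \subseteq \Alg^{n+k}$ to obtain a finite list of proper algebraic subgroups $C_1,\ldots,C_s$ of $\Alg^n$, each defined by $M^i\cdot x = 0$ for a rank-$n_i$ matrix $M^i$ with entries in $\End(\Alg)$. For each $i$, define the $y$-parametric subvariety $W^{*i}(x,y)$ exactly as in the proof of Lemma~\ref{lemm-ss-def}: $W^{*i}_b$ is the Zariski closure of the set of $x \in V_b$ at which the fibre of the projection $V_b \to M^i\cdot V_b$ has dimension strictly greater than the generic value $\dim V_b - \dim(M^i\cdot V_b)$. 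The definability of dimension in strongly minimal theories and quantifier elimination in $T_{\Alg}$ (cf.\ Lemma~\ref{lemm-rotundity}) ensure that each $W^{*i}$ is cut out by a quantifier-free $L_\Alg$-formula. Set
\[
\phi(x,y) \;:=\; V(x,y) \wedge \bigwedge_{i=1}^s \neg W^{*i}(x,y).
\]

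For the first bullet, when $V_b$ is prealgebraic minimal rotund it is irreducible of dimension $n/2$, and a standard fibre-dimension argument (exactly as used in the proof of Lemma~\ref{lemm-ss-def}) shows each $W^{*i}_b$ is a proper subvariety of $V_b$. Hence any generic point of $V_b$ over $b$ lies in $V_b$ but outside every $W^{*i}_b$, and so satisfies $\phi(x,b)$.

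For the second bullet, fix $a\models\phi(x,b)$ and a strong $\cl_0$-closed $B \supseteq b$, and assume $a\not\subset B$. After replacing $B$ by $\acl_{T_\Alg}(B)$, which remains strong by Lemma~\ref{lemm-ss-acl}, I assume $B$ is algebraically closed. Suppose for contradiction that $\trd(a/B) < \dim V_b = n/2$. Let $\alpha + C$ be the minimal coset over $B$ containing $a$, with $\alpha\in B^n$ and $C = \{x : M \cdot x = 0\}$ for a matrix $M$ of rank $k = n - \ld(a/B)$, and let $S$ be the irreducible component of $V_b \cap (\alpha + C)$ through $a$. Combining the strongness of $B$, the prealgebraic minimal rotundity of $V_b$, and $\trd(a/B) < n/2$ should force $S$ to be an atypical component of $V_b \cap (\alpha + C)$. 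Applying Weak CIT to this atypical $S$ then yields some $i\in\{1,\ldots,s\}$ and some $\alpha'\in\bar A^n$ with $S \subseteq \alpha' + C_i$ and $S$ typical with respect to $\alpha' + C_i$; since $a\in S$ gives $a+C_i = \alpha'+C_i$, the fibre $V_b \cap(a+C_i)$ has dimension at least $\dim S$, which is strictly larger than the generic fibre dimension $\dim V_b - \dim(M^i\cdot V_b)$, placing $a$ in $W^{*i}_b$ and contradicting $\phi(a,b)$.

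The main obstacle is establishing the atypicality of $S$. Typicality would give $\dim S = n/2 - k$, whereas the hypotheses deliver $\dim S \geq \trd(a/B)$, $\delta(a/B) \geq 0$, and $\dim(M\cdot V_b) > k/2$ for every rank-$k$ matrix $M$ with $k<n$ (by prealgebraic minimal rotundity); extracting the strict inequality $\dim S > n/2 - k$ from these ingredients requires careful case analysis on $k = n - \ld(a/B)$. The most delicate case is when $\ld(a/B) = n$, for then $k=0$, $\alpha+C = \bar A^n$, and $S = V_b$ is automatically typical; this case needs a separate argument, exploiting that $a\not\subset B$ together with the irreducibility of $V_b$ over $B$ to directly rule out non-genericity of $a$ over $B$ when there is no linear relation.
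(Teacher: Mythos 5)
Your choice of $\phi$ (conjoin $V$ with the complements of the exceptional fibre loci $W^{*i}$ attached to a Weak CIT datum for the family $V(x,y)$) is the natural one, and the first bullet is fine; note for comparison that the paper does not argue the lemma at all but quotes Lemma 4.4 of \cite{BadField} and observes that the proof transfers. The problem is the second bullet, and it sits exactly where you flag uncertainty. Your argument never uses that $a$ is a tuple of green points, and without that it cannot work: strongness of $B$ only constrains green tuples (Remark~\ref{ss-checkgreen}), since the negative term of $\delta$ counts only points of $G$, so for a white tuple the inequality $\delta(a/B)\geq 0$ gives no lower bound on $\trd(a/B)$ in terms of $\ld(a/B)$, and your derivation of atypicality has nothing to feed on. Worse, the case $\ld(a/B)=n$ is not merely ``delicate'': irreducibility of $V_b$ cannot ``directly rule out non-genericity'', because for $\dim V_b\geq 2$ there are plenty of non-generic points of $V_b$ avoiding any fixed proper closed subset and satisfying no $\End(\Alg)$-linear relation over $B$ (take $B=\acl_T(b)$ and a point generic over $B$ in a slice of $V_b$ by a non-coset hypersurface through parameters in $B$, chosen to miss the finitely many $W^{*i}_b$); such a point satisfies $\phi(x,b)$, is not contained in $B$, and is not generic, so no quantifier-free $L_{\Alg}$-formula can yield the dichotomy for arbitrary realizations. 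The statement is only used, and only correct, for $a\in G^n$ (this is how it enters Lemmas~\ref{defbledelta} and~\ref{RMRU}, where $\phi$ is conjoined with $\bigwedge_i G(x_i)$), and any proof must exploit this.

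Once you add and use the hypothesis $a\in G^n$, your skeleton does work: strongness of $B$ gives $2\trd(a/B)\geq\ld(a/B)$, which settles $\ld(a/B)=n$ outright, identifies ``$a\subset B$'' with $\ld(a/B)=0$ (as $B$ is $\cl_0$-closed), and in the intermediate case $0<k<n$ yields $\dim S\geq\trd(a/B)\geq\frac{n-k}{2}>\frac{n}{2}-k$, i.e.\ the atypicality you wanted. But even there your final step is misstated: after Weak CIT you only know $S\subset a+C_i$ with $C\subset C_i$ (after replacing $C$ by the minimal subgroup a coset of which contains $S$, as in the proof of Lemma~\ref{lemm-rotundity}), so $n_i\leq k$ and the inequality ``$\dim S>\dim V_b-\dim(M^i\cdot V_b)$'' does not follow from $\dim S\geq\frac{n-k}{2}$; instead you must use the typicality equation, which gives $\dim V_b\cap(a+C_i)=\dim S+\dim C_i-\dim C\geq\frac{1}{2}\dim C_i=\frac{n-n_i}{2}>\dim V_b-\dim(M^i\cdot V_b)$ by prealgebraic minimality, and this is what places $a$ in $W^{*i}_b$ and contradicts $\phi(a,b)$. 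So: right construction of $\phi$, but a genuine gap in the dichotomy argument (greenness is indispensable, the linearly independent case as proposed is unrepairable, and the closing dimension count needs the minimal-coset reduction and the typicality identity rather than the inequality you wrote).
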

\begin{proof}
This is Lemma 4.4 in \cite{BadField}. Since the lemma deals only with the definability of relative predimension, one sees that it holds without changes in our greater generality. 
\end{proof}

%%%%%%%%%%

\begin{Lemm} \label{defbledelta}
For all $a, b \subset \bar \A$ with $\spank(b) \str \bar \A$, there exists an existential $L$-formula $\tau_{a,b}^\delta(x,y)$ such that
\begin{itemize}
\item $\A \models \tau_{a,b}^\delta(a,b)$, and
\item for all $a',b' \subset \bar \A$, if $\A \models \tau_{a,b}^\delta(a',b')$ then $\delta(a'/b') \leq \delta(a/b)$.    
\end{itemize}
\end{Lemm}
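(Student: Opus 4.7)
The strategy, following \cite[Lemma 10.3]{BadField}, is to decompose the strong extension $\spank(b) \str \scl_{\spank(b)}(\spank(ab))$ as a tower of minimal strong extensions and encode each step by an existential $L$-formula whose satisfaction bounds the corresponding $\delta$-increment. I set $B_0 := \spank(b)$ (strong in $\bar\A$ by hypothesis) and $Y := \scl_{B_0}(\spank(ab))$ (also strong, by Lemma~\ref{ss-props}), and pick a tuple $d \subset Y$ so that $(a,d)$ is a $k_\Alg$-linear basis of $Y$ over $B_0$. Then $\delta(ad/b) = \delta(Y/B_0) = \dd(a/b) \leq \delta(a/b)$. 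By the remark after Proposition~\ref{iff}, I decompose $B_0 \str Y$ as $B_0 \str B_1 \str \cdots \str B_m = Y$, each $B_{i+1} = B_i + \spank(c_{i+1})$ a minimal strong extension, arranged so that $(c_1,\ldots,c_m)$ matches $(a,d)$ under a fixed linear reindexing.

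For each step, classified by Lemma~\ref{min-exts}, I build a quantifier-free $L$-formula $\psi_i$ in variables for $c_{i+1}$ and the preceding parameters, satisfied by the actual values and such that any primed values satisfying it give $\delta(B'_{i+1}/B'_i) \leq \delta(B_{i+1}/B_i)$. For a prealgebraic minimal step I apply Lemma~\ref{semidef-generic} to the locus $V^{(i+1)}$ of $c_{i+1}$ (a prealgebraic minimal rotund variety by Remark~\ref{isrotund}) to obtain the generic-point formula, and conjoin it with $G$-predicates on each coordinate of $c_{i+1}$; on the primed side, rotundity forces $\End(\Alg)$-linear independence of $c'_{i+1}$ over $B'_i$ and genericity gives $\trd(c'_{i+1}/B'_i)=n_{i+1}/2$, so the increment is exactly $0$. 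For a green generic step I take $\psi_i := G(c_{i+1})$; for an algebraic step, the minimal polynomial equation of $c_{i+1}$ over $B_i$; for a white generic step, $\psi_i := \top$.

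Setting $\psi(x,y,z) := \bigwedge_i \psi_i$ (with the $c_i$ identified with components of $(x,z)$ via the reindexing) and $\tau^\delta_{a,b}(x,y) := \exists z\, \psi(x,y,z)$ gives an existential $L$-formula. Satisfaction by $(a,b)$ is witnessed by $z = d$. For any $(a',b')$ satisfying $\tau^\delta_{a,b}$ with witness $d'$, setting $B'_i := \spank(b',c'_1,\ldots,c'_i)$ and summing the step-wise bounds via the addition formula for $\delta$ yields
\[
  \delta(\spank(a'b'd')/\spank(b')) \;\leq\; \delta(Y/B_0) \;=\; \dd(a/b) \;\leq\; \delta(a/b).
\]

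The main obstacle is then to pass from this bound to the desired $\delta(a'/b') \leq \delta(a/b)$: the two quantities differ by $\delta(\spank(a'b'd')/\spank(a'b'))$, which is not automatically non-negative, since $d'$ could contribute new green points without enough transcendence. I would resolve this by exploiting the flexibility in ordering the tower decomposition together with the fact that $Y \supset \acl_{T_{\Alg}}(\spank(ab))$ (Lemma~\ref{lemm-ss-acl}): I arrange the $d$-components to be introduced last, in algebraic steps wherever possible, so that on the primed side the contribution of $d'$ over $\spank(a'b')$ is exactly $0$, refining the formulas $\psi_i$ for any remaining non-algebraic $d$-steps to secure the needed rotundity. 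This careful bookkeeping, which parallels the proof of Lemma~10.3 in \cite{BadField}, is the technical heart of the argument.
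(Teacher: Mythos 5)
Your overall strategy (tower of minimal strong extensions, step formulas via Lemma~\ref{semidef-generic}, $G$-predicates for green steps, a minimal-polynomial formula for algebraic steps, a trivial formula for white generic steps) is the paper's, but the place where you start the tower creates a genuine gap, which you yourself flag and do not close. By decomposing $\spank(b) \str \scl(\spank(ab))$ you introduce witnesses $d$ lying outside $\spank(ab)$, and the summed bound you obtain is on $\delta(a'd'/b')$, not on $\delta(a'/b')$; since $\delta(a'/b') = \delta(a'd'/b') - \delta(d'/a'b')$ and $\delta(d'/a'b')$ can be negative on the primed side, the desired inequality does not follow. Your proposed repair --- introduce the $d$-components last, ``in algebraic steps wherever possible'' --- cannot work in general: the elements added by the strong closure are typically transcendental over $\spank(ab)$ (they arise from green configurations of negative predimension, i.e.\ prealgebraic data), so those steps are not algebraic, and ``refining the formulas to secure the needed rotundity'' is exactly the missing argument rather than a proof of it.

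The correct move is not to pass to the strong closure at all, and this is what the paper does. Since $\spank(b)$ is strong in $\bar \A$ by hypothesis, it is already strong in $\spank(ab)$, so $\spank(b) \str \spank(ab)$ is a strong extension in $\Fin\CC_0$ and decomposes into minimal strong extensions (Lemma~\ref{min-exts}) whose increments sum \emph{exactly} to $\delta(a/b)$ by the addition formula; note the lemma only asks for the bound $\delta(a/b)$, not $\dd(a/b)$, so nothing is gained by saturating to $\scl$. All step bases then lie in $\spank(ab)$. In the prealgebraic case the paper takes a \emph{green} basis $c$ of the step and conjoins a formula $\psi(x,z)$ forcing $\spank(a') = \spank(c')$ on the primed side (your plan to make the step bases literally the components of $(a,d)$ up to reindexing is also problematic, since prealgebraic steps require green bases while $a$ may be white), while the parameters of the locus are pinned down by a formula isolating their algebraic type over $y$ together with the definability of prealgebraic minimal rotundity (Lemma~\ref{lemm-rotundity-premin}); one then gets $\delta(a'/b') = \delta(c'/b') \leq \delta(a/b)$ with no extraneous elements entering the span. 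With that change of starting point and the span-matching conjunct, your per-step formulas are essentially the paper's.
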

\begin{proof}
Let $B:=\spank(b)$ and $A := \spank(ab)$. Then $B \str \A$ is a finite dimensional strong extension. 

We may assume that the extension $B \str \A$ is minimal. Indeed, in the general case, the extension can be decomposed into a tower of minimal strong extensions and the conjunction of the formulas obtained for each of the minimal extensions in the tower is equivalent to an existential formula with the required properties.

We deal separately with the different cases from Lemma~\ref{min-exts}. Cases 2,3,4 are easy: if the extension $B \str \A$ is a green generic extension (case 2), then we can take the formula $\tau_{a,b}^\delta(x,y)$ to be $G(x)$; if the extension is algebraic (case 3) then we can take $\tau_{a,b}^\delta(x,y)$ to be any formula witnessing the algebraicity of $a$ over $b$; if the extension is white generic (case 4), then $\tau_{a,b}^\delta(x,y)$ can be the formula $x=x$.

Let us thus assume that $B \str \A$ is a prealgebraic minimal extension (case 1). Let $n$ be the linear dimension of $A$ over $B$. Let $c \subset G^n$ be a green linear basis of $A$ over $B$. Let $V(z,d)$ be the locus of $c$ over $\Q(b)\alg$ and let $\tau(w,b)$ be a formula isolating the type of $d$ over $b$. 
Consider the conjunction $\tilde\tau(x,y,z)$ of the following formulas:
\begin{itemize}
\item $\psi(x,z)$\ \ , where $\psi(x,z)$ is such that $\A \models \psi(a,c)$ and for all $a',c'$, if $\A \models \psi(a',c')$ then $\spank(a') = \spank(c')$;
\item $\bigwedge_{i} G(z_i)$\ \ ;
\item $\exists w (\tau(w,y) \land \phi(z,w) \land \theta'(w)$\ \ , where $\theta'(w)$ expresses that $V_w$ is prealgebraic minimal rotund (see Lemma~\ref{lemm-rotundity-premin}) and $\phi(z,w)$ is as provided by Lemma~\ref{semidef-generic} for $V(z,w)$.
\end{itemize}

Note that $\tilde\tau(x,y,z)$ is equivalent to an existential formula. Thus, there is also an existential formula equivalent to $\exists z \tilde\tau(x,y,z)$; which works as $\tau_{a,b}^\delta(x,y)$ as we shall now see. Indeed, one can simply note that $\A \models \tilde\tau(a,b,c)$, and, if $\A \models \tilde\tau(a',b',c')$ then 
\[
\delta(a'/b') = \delta(c'/b') \leq 0,
\] 
(the equality holds because, since $\A \models \psi(a',c')$, we have $\spank(a') = \spank(c')$; the inequality follows directly from the definitions of $\tau$, $\phi$, $\theta$, etc).
\end{proof}

\begin{Thm} \label{Tnearmodelcomplete}
The theory $T$ is near model complete.
\end{Thm}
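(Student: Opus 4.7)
The plan is to deduce the theorem immediately from Proposition \ref{nearmodelcomplete}, whose hypothesis I would verify using Lemma \ref{defbledelta}. Given a finite tuple $a \subset \A \models T$, I need an existential $L$-formula $\tau_a^\delta(x)$ satisfied by $a$ such that every realisation $a'$ of $\tau_a^\delta$ in any model of $T$ satisfies $\delta(a') \leq \delta(a)$; once this is in hand, item (6) of Proposition \ref{nearmodelcomplete} gives near model completeness with no further effort.

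To obtain $\tau_a^\delta$ I would apply Lemma \ref{defbledelta} with $b$ the empty tuple. This is legitimate because throughout Section \ref{sec:theory} we work over the fixed finite-dimensional strong substructure $X_0$: both $\cl_0$ and $\delta$ are understood localised at $X_0$, so $\spank(\emptyset) = X_0$, and $X_0 \str \bar \A$ by the very definition of the class $\CC_0$. The lemma then supplies an existential $L$-formula $\tau_{a,\emptyset}^\delta(x)$ satisfied by $a$ and such that any realisation $a'$ in a model of $T$ has $\delta(a') = \delta(a'/X_0) \leq \delta(a/X_0) = \delta(a)$, which is exactly the existential control of $\delta$ required by Proposition \ref{nearmodelcomplete}. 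Setting $\tau_a^\delta(x) := \tau_{a,\emptyset}^\delta(x)$ completes the verification.

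The substantive work has already been done: Proposition \ref{nearmodelcomplete} packages a back-and-forth argument that turns existential control of $\delta$ (equivalently, of the associated dimension $\dd$) into quantifier elimination in the $L^\ast$-expansion, while Lemma \ref{defbledelta} does the geometric heavy lifting by splitting into the four kinds of minimal strong extensions from Lemma \ref{min-exts} and applying the definability of prealgebraic minimal rotundity (Lemma \ref{lemm-rotundity-premin}) together with the definable genericity witness of Lemma \ref{semidef-generic} in the prealgebraic case. I would therefore expect no further obstacle; the proof reduces to the short observation above.
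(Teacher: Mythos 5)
Your proposal is correct and follows the paper's own route exactly: the paper proves Theorem~\ref{Tnearmodelcomplete} as an immediate consequence of Lemma~\ref{defbledelta} and Proposition~\ref{nearmodelcomplete}. Your specialisation of Lemma~\ref{defbledelta} to the empty tuple $b$, using that $\spank(\emptyset)=X_0$ is strong since everything is localised at $X_0$, is precisely the (unstated) bridging step the paper leaves implicit, so there is nothing missing.
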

\begin{proof}
Immediate from Lemma~\ref{defbledelta} and Proposition~\ref{nearmodelcomplete}.
\end{proof}

\subsection{Forking independence and ranks}

We now calculate $U$-ranks and Morley ranks of 1-types in the theory $T$. Below the symbol $\INDEP{}{}$ denotes forking independence for the theory $T$.

\begin{Def}
Let $A,B,C$ be $\cl_0$-closed subsets of $\bar A$ with $B \subset A,C$. 

We say that \emph{$A$ and $C$ are in free amalgam over $B$} if $A$ and $C$ are $\acl_{\Alg}$-independent over $B$ and $G \cap (A+C) = (G \cap A) + (G \cap C)$.

We say that \emph{$A$ and $C$ are in strong free amalgam over $B$} if $A$ and $C$ are in free amalgam over $B$ and $A+C$ is strong in $\bar A$.

%For arbitrary subsets $A,B,C$ of $\bar A$, we say that $A$ and $C$ are in free amalgam over $B$ if $\cl_0(AB)$ and $\cl_0(CB)$ are in (strong) free amalgam over $\cl_0(B)$.
\end{Def}

\begin{Prop} \label{indep}
Let $a \subset \bar A$ and $B,C$ be strong $\cl_0$-closed subsets of $\bar \A$ with $B \subset C$. Then, $a\INDEP{B}{C}$ if and only if $A := \scl(aB)$ and $C$ are in strong free amalgam over $B$.
\end{Prop}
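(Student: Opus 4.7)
The strategy is to introduce an auxiliary ternary relation defined by ``$\scl(aB)$ and $C$ are in strong free amalgam over $B$'', show that it is a stationary independence relation in the $\omega$-stable theory $T$ (Theorem~\ref{Stab}), and conclude that it must coincide with forking independence.

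The first and central step is \emph{stationarity}: if $\tp(a/B) = \tp(a'/B)$ and both $\scl(aB)$ and $\scl(a'B)$ are in strong free amalgam with $C$ over $B$, then $\tp(a/C) = \tp(a'/C)$. Using Lemma~\ref{eq-acl}, I lift $a \mapsto a'$ to an $L$-isomorphism $f\colon \scl(aB) \to \scl(a'B)$ over $B$. By the uniqueness of the free amalgam construction, $f$ amalgamates with the identity on $C$ to an isomorphism $\tilde f\colon \scl(aB) + C \to \scl(a'B) + C$ fixing $C$ pointwise. The strong-free-amalgam hypothesis forces both source and target to be strong substructures of $\bar\A$, so by Lemma~\ref{bnf} the map $\tilde f$ is elementary, giving $\tp(a/C) = \tp(a'/C)$.

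Next, \emph{existence}: for every $a$ and every pair $B \subset C$ as in the statement, some conjugate of $a$ over $B$ satisfies the strong-free-amalgam condition with respect to $C$. One constructs $\Y := \scl(aB) +_B C$ inside $\Sub\CC_0$ via Lemma~\ref{aap-cc0} (applicable since $B$ is strong both in $\scl(aB)$ and in $C$, yielding the symmetric version with $\scl(aB)$ and $C$ both strong in $\Y$), extends $\Y$ to some $\A' \in \CC_0$ with $\Y \str \A'$ by Lemma~\ref{ext-cc0}, and then strongly embeds $\A'$ into $\bar\A$ over $C$ by appealing to the saturation of $\bar\A$ and a back-and-forth argument using the system $\F$ from Remark~\ref{RemF}, starting from the identity on $C$. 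The image of $a$ is the desired conjugate.

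Combining these two steps, the auxiliary relation satisfies existence and stationarity; symmetry, monotonicity, transitivity and finite character follow at once from the symmetric nature of the conditions defining strong free amalgam (algebraic independence, splitting of the predicate $G$, and strongness of the sum). Since $T$ is $\omega$-stable, the unique stationary independence relation satisfying these properties on the class of strong parameter sets is $\INDEPo$, and the proposition follows. The main obstacle I expect is the existence step: strongly embedding $\A'$ into $\bar\A$ over $C$ when $a$ and hence $\A'$ may be of arbitrary cardinality, which requires carefully combining the sufficient saturation of $\bar\A$ with a back-and-forth over $C$. A secondary subtlety is the appeal to stability theory over non-$\acl_T$-closed bases; here one uses Lemma~\ref{scl-acl} to replace $B$ and $C$ by their $\acl_T$-closures without disturbing either the strong-free-amalgam relation or the forking relation, reducing to the classical situation of non-forking extensions over algebraically closed parameter sets in an $\omega$-stable theory.
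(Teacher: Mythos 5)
Your stationarity step is exactly the first remark in the paper's own proof (lift $a \mapsto a'$ to an isomorphism of strong closures over $B$ via Lemma~\ref{eq-acl}, amalgamate with $\id_C$ using uniqueness of the free amalgam, conclude by Lemma~\ref{bnf}), and your existence step is fine and is implicitly needed by the paper as well. The gap is in the final step, which is where all the remaining content lives. The characterization theorem you invoke -- ``the unique stationary independence relation satisfying these properties is forking'' -- is not available in the form you quote: in its standard formulations (Harnik--Harrington, Kim--Pillay, Tent--Ziegler) it requires \emph{local character} among the hypotheses and is stated for relations defined over all small base sets with stationarity over models, whereas you list only existence, stationarity, symmetry, monotonicity, transitivity and finite character, and your relation is only defined (and only shown stationary) over strong $\cl_0$-closed bases. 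Without local character, the axioms you list do not by themselves force the relation to be forking; and verifying local character for the strong-free-amalgam relation, or bypassing it, essentially amounts to proving that the strong-free-amalgam extension of $\tp(a/B)$ does not fork over $B$ -- which is precisely the step your plan never addresses directly. That step is the heart of the paper's argument: the paper shows that if $A$ and $C$ are in strong free amalgam over $B$ then $\tp(a/C)$ does not \emph{split} over $B$ (a short direct argument extending an automorphism of $C$ over $B$ to the monster and using the free amalgam), and then quotes the stability-theoretic fact that a non-splitting (``special'') extension is an heir, hence non-forking, and unique; together with stationarity and existence this gives both directions of the proposition.

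Two secondary points. First, the assertion that symmetry, monotonicity, transitivity and finite character ``follow at once'' is too quick: transitivity and base monotonicity of the strong-free-amalgam relation require genuine (if routine) predimension computations, and finite character in the $C$-variable needs the directed-union characterization of strongness; none of this is harder than the paper's non-splitting argument, which is why the abstract-axioms route buys little here. Second, the proposed reduction ``replace $B$ and $C$ by their $\acl_T$-closures without disturbing either relation'' needs an argument: it does work, because for a strong $\cl_0$-closed $B$ one has $\acl_T(B) = \acl_{T_\Alg}(B)$ by Lemma~\ref{scl-acl}, strongness passes to the algebraic closure by Lemma~\ref{lemm-ss-acl}, and a strong set acquires no new green points in its algebraic closure, but you should say this rather than assume it. The cleanest repair of your plan is to replace the black-box citation by the direct non-splitting claim (or an equivalent argument that the canonical global strong-free-amalgam type is $B$-invariant, hence non-forking over $B$), at which point your stationarity and existence steps complete the proof exactly as in the paper.
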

\begin{proof}
% REF (Following an argument from \cite{PzEq2}.)

Let $a,B,C$ be as in the statement of the lemma. Let $A:= \scl(aB)$. Throughout the following, we use Lemma~\ref{eq-acl} without explicit mention.

Let us first remark that if $A$ and $C$ are in strong free amalgam over $B$, then this determines $\tp(a/C)$ uniquely among the extensions of $\tp(a/B)$ over $C$. To see this, assume $A$ and $C$ are in strong free amalgam over $B$. Let $a'\subset \bar A$ be such that $\tp(a'/B) = \tp(a/B)$ and $A':=\tp(a'B)$ and $C$ are in strong free amalgam over $B$. Since $\tp(a'/B) = \tp(a/B)$, the map $a \mapsto a'$ extends to an isomorphism from $A$ to $A'$ over $B$. Since $A$ and $C$ are in free amalgam over $B$ and so are $A'$ and $C$, then the above isomorphism can be extended to an isomorphism from $A+C$ to $A'+C$ over $C$. Since $A+C$ and $A'+C$ are strong, this shows that $\tp(a/C) = \tp(a'/C)$.

In order to prove the proposition, it is sufficient to show the following:

\textbf{Claim:} Suppose $A$ and $C$ are in strong free amalgam over $B$. Then $\tp(a/C)$ \emph{does not split} over $B$. 
% COMMENT Add Def? mind the ``saturation''. See definition of special in Poizat.

Indeed, by the stability of $T$, it follows from the claim that if $a$ and $C$ are in free amalgam over $B$, then $\tp(a/C)$ is the \emph{unique} non-spliting, and thus non-forking, extension of $\tp(a/B)$ over $C$. \footnote{For the uniqueness see Corollary 12.6 in \cite{PzMT}. There, \emph{son} means extension, \emph{special} means non-splitting, \emph{heir} means non-forking extension}.

\textbf{Proof of claim:}
Suppose $A$ and $C$ are in strong free amalgam over $B$. Let $p= \tp(a/C)$. Let $\sigma$ be an automorphism of (the induced structure on) $C$ over $B$. We want to see that $\sigma(p) = p$.

Since $C$ is strong, $\sigma$ is an elementary map. Therefore we can find an automorphism $\bar\sigma$ of $\bar A$ extending $\sigma$. Since $\sigma(p) = \tp(\bar\sigma(a)/C)$, what we want to see is that $\tp(\bar\sigma(a)/C) = \tp(a/C)$.

Let $A':=\bar\sigma(A)$. It is easy to see that $A' = \scl(\bar\sigma(a)B)$. Note that the elementary maps $\bar\sigma|_{A}$ and $\id_C$ coincide on $B$. Since $A$ and $C$ are in free amalgam, the union of $\bar\sigma |_A$ and $\id_C$ extends to an isomorphism from $A+C$ to $A'+C$. By assumption, $A+C$ is strong, and so is $A'+C$, for it is the image of $A+C$ under the automorphism $\bar \sigma$ of $\bar A$. We have thus found an elementary map over $C$ sending $a$ to $\bar\sigma(a)$. Hence $\tp(\bar\sigma(a)/C) = \tp(a/C)$, as required.
\end{proof}

\begin{Rem}
The above proof shows that types over $\acl$-closed sets (in the (real) base sort) are stationary. 
%COMMENT Add Def of stationary?
\end{Rem}

Before calculating Morley ranks for the theory $T$, we calculate the U-rank of 1-types over finitely generated sets of parameters. Since $T$ is superstable, for every type there is a finite set over which it does not fork; it follows that the U-rank of any type equals that of a restriction to a finitely generated set of parameters.

\begin{Lemm} \label{RU}
Let $a \in \bar A$ and let $B \subset \bar A$ be a strong finitely generated $\cl_0$-closed set.
\begin{enumerate}
\item If $\dd(a/B) = 0$, then $\RU(a/B)< \omega$.
\item If $\dd(a/B) = 1$, then $\RU(a/B) = \omega + m$ for some $m \in \omega$.
\item If $\dd(a/B) = 2$, then $\RU(a/B) = \omega \cdot 2$.
\end{enumerate}
\end{Lemm}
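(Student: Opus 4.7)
The main tool is Proposition~\ref{indep}: non-forking extensions $a \INDEP{B}{C}$ are exactly the strong free amalgams of $\scl(aB)$ with $C$ over $B$. A direct calculation with the additivity of $\trd$ and $\ld$ in such an amalgam shows that non-forking preserves $\dd(a/\cdot)$; conversely, if $\tp(a/BC) \supsetneq \tp(a/B)$ forks, one of the three strong-free-amalgam conditions (algebraic independence, coherence of the green part, strongness of the sum) must fail, and in each instance this forces $\dd(a/BC) < \dd(a/B)$. I prove the three cases in order, Case~1 feeding the upper bound in Case~2 and Case~2 feeding the upper bound in Case~3.

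For Case~1, Lemma~\ref{min-exts} decomposes $B \str \scl(aB)$ as a finite tower of minimal strong extensions, each necessarily algebraic or prealgebraic minimal since $\dd(a/B) = 0$ excludes green and white generic steps. Algebraic steps contribute $\RU$-rank~$0$; each prealgebraic minimal step is concentrated on a rotund subvariety $W \subset \bar A^n$ of dimension $\tfrac{n}{2}$, and contributes at most the finite Morley rank of $W$ in $T_{\Alg}$. Lascar's inequality combines these to give $\RU(a/B) < \omega$.

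For Case~2, the upper bound $\RU(a/B) < \omega \cdot 2$ is immediate from the rigidity and Case~1: every forking extension has $\dd(a/BC) = 0$ and hence $\RU < \omega$. For the lower bound $\RU(a/B) \ge \omega$, I produce for each $n$ a forking extension of rank at least $n$: using the richness of $\bar \A$ and the prealgebraic minimal rotund varieties coming from generic hyperplanes (Remark~\ref{hyper-over-alg}), I amalgamate over $B$ a prealgebraic minimal configuration of sufficient complexity in which $a$ participates as part of the new green $\cl_0$-basis, forcing $\dd(a/BC_n)$ to drop to $0$. The resulting extension forks by Proposition~\ref{indep} and has $\RU \ge n$ by Case~1.

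Case~3 follows the same template: the upper bound $\RU(a/B) \le \omega \cdot 2$ comes from Case~2 applied to forking extensions, which now have $\dd \le 1$. For the lower bound, for each $n$ I adjoin a parameter placing $a$ into a green generic position over $BC$ (e.g.\ in the multiplicative case picking $c$ so that $a + c$ is a fresh generic green), reducing $\dd(a/BC)$ to~$1$ and giving $\RU(a/BC) \ge \omega$ by Case~2; taking the sup over~$n$ yields $\RU(a/B) \ge \omega \cdot 2$. The main technical obstacle is the lower-bound constructions in Cases~2 and~3: one must realise the required amalgams inside $\bar \A$ while verifying that the rotund varieties used are prealgebraic minimal of the intended form and that each constructed $\dd$-drop is witnessed by genuine forking through Proposition~\ref{indep}. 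The existence framework from Section~\ref{sec:existence}, together with Remarks~\ref{hyper-over-alg} and~\ref{g-alg}, supplies the varieties; the remainder is careful bookkeeping.
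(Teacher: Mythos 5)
Your skeleton is the paper's (Proposition~\ref{indep}, preservation of $\dd$ under non-forking, tower decompositions, towers for the lower bounds), but the two statements you lean on are not correct as written, and the missing ingredient is precisely the hard part of the lemma. First, ``if $\tp(a/BC)$ forks then $\dd(a/BC)<\dd(a/B)$'' is false in general: a non-algebraic type with $\dd(a/B)=0$ has forking (eventually algebraic) extensions although $\dd$ cannot drop; and if the extension $B \str \scl(Ba)$ is \emph{not} minimal, a type with $\dd(a/B)=1$ has forking extensions with $\dd$ still equal to $1$ (decompose $\scl(Ba)$ as $B \str X_1 \str A$ with $X_1$ prealgebraic minimal and $A$ green generic over $X_1$, and take $C=X_1$: then $\dd(a/C)=1$ but $\RU(a/C)=\omega<\omega+m=\RU(a/B)$, so the extension forks). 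The implication that is true and needed is the other one -- if $\dd$ is preserved (and, in the finite-rank case, $a\notin\acl(C)$) then one gets a strong free amalgam, hence non-forking -- and its proof requires the \emph{minimality} of $B\str\scl(Ba)$ (to deduce $\scl(Ba)\cap\acl(C)=B$, hence algebraic independence, then freeness and strongness from the $\delta$-computation). You never use minimality in Cases~2 and~3; taken literally, your Case~2 (``every forking extension has $\dd=0$, hence finite rank'') would give $\RU(a/B)\le\omega$, which is wrong whenever $m>0$. The general case needs the decomposition into minimal extensions and Lascar's inequalities. Second, in Case~1 the bound ``each prealgebraic minimal step contributes at most the Morley rank of $W$ in $T_{\Alg}$'' is a non sequitur: $\RU$ in $T$ is not bounded by the rank of the locus in the reduct (the green generic has $T_{\Alg}$-rank $1$ but $T$-rank $\omega$, the white generic $T$-rank $\omega\cdot 2$). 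What must be proved, and what the paper proves, is that a minimal prealgebraic step has U-rank exactly $1$: any non-algebraic extension of such a type is non-forking, again via minimality; this is the core of the whole argument and is absent from your proposal.

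The lower bounds also need repair. A single prealgebraic minimal configuration in which $a$ (or a green element interalgebraic with it -- note $a$ itself need not be green when $\dd(a/B)=1$) occurs in the green basis gives $\RU(a/C)=1$ no matter how large its linear dimension, precisely because minimal prealgebraic extensions have rank $1$; so ``a configuration of sufficient complexity'' does not yield $\RU\geq n$, and your appeal to Case~1 cannot yield it either, since Case~1 as you argue it is only an upper bound. One needs a \emph{tower} of $n$ minimal prealgebraic extensions over $C_n$ with $\scl(C_n a)$ equal to the whole tower, together with the exact value $\RU=n$ obtained from additivity of finite U-rank along the tower. In Case~3 your construction produces, for each $n$, a forking extension of rank $\geq\omega$; this only gives $\RU(a/B)\geq\omega+1$, not $\omega\cdot 2$. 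You need forking extensions of rank $\geq\omega+n$ for every $n$, i.e.\ towers of $n$ prealgebraic minimal extensions with a green generic step on top, which is exactly what the paper's hint describes.
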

\begin{proof}
Let $A := \scl(Ba)$. Note that $\delta(A/B) = \dd(a/B)$.

%%%%%%%%%%%%%%%%

\textbf{Proof of 1.} Suppose $\dd(a/B) = 0$. 

If $a \in \acl(B)$, then $\RU(a/B) = 0$. Thus, assume $a \not\in \acl(B)$.

Let us assume first that the extension $B \str A$ is minimal. We shall see that in this case $\RU(a/B) = 1$.

Let $C$ be a finitely generated strong $\cl_0$-closed set containing $B$. We want to see that if $\tp(a/C)$ forks over $B$, then $\tp(a/C)$ is algebraic.

Suppose $\tp(a/C)$ is not algebraic, i.e. $a \not\in\acl(C)$. Then, by the minimality assumption, $A \cap \acl(C) = B$. 

It follows that $A$ and $C$ are $\acl_{\Alg}$-independent over $B$: To see this, let $D := \acl_{\Alg}(A) \cap \acl_{\Alg}(C)$. We want to show $D = \acl_{\Alg}(B)$. Note that $\acl(B) \subset \acl(D) \subset \acl(A) = \acl(Ba)$, therefore either $\acl(D) = \acl(B) = \acl_{\Alg}(B)$ or $\acl(D) = \acl(Ba) = \acl(A)$. But the latter is impossible, because then $A \subset \acl(D)$ and hence
\[
A = A \cap \acl(D) \subset A \cap \acl(\acl(C)) = A \cap \acl(C) = B,
\] 
a contradiction. Therefore the former holds and hence $D \subset \acl_{\Alg}(B)$. Since the inclusion from right to left is obvious, we get $D = \acl_{\Alg}(B)$. 

Moreover, $A$ and $C$ are in free amalgam over $B$: Indeed, using the modularity of $\cl_0$ and the $\acl_{\Alg}$-independence of $A$ and $C$ over $B$, one sees that:
\[
\delta(A/C) = \delta(A/B) - \ld((A+C)\cap G/A \cap G+ C\cap G).
\]
Since $\delta(A/B) = 0$ and $\delta(A/C) \geq 0$ (because $C$ is strong), we get that $\ld((A+C)\cap G / A\cap G+C\cap G) = 0$, which means that $A$ and $C$ are in free amalgam over $B$.

Also, we see that $\delta(A+C/C) = 0$, hence $A+C$ is strong.

Therefore, $A$ and $C$ are in strong free amalgam over $B$, which means that $\tp(a/C)$ does not fork over $B$.

We have thus seen that $\RU(a/B) = 1$.

Without the minimality assumption, the extension $B \str A$ decomposes into a tower of prealgebraic minimal extensions $B = B_0 \str B_1 \str \dots \str B_n = A$. Using the additivity of finite U-ranks, one sees that $RU(a/B) = n$ (for each $i$, take an element $b_i$ in $B_i \setminus B_{i-1}$; note that $B_i$ is contained in $\acl(B_{i-1} b_i)$; then $RU(a/B) = \sum_i RU(b_i/B_{i-1}) = n$.)

%%%%%%%%%%%%%%%

\textbf{Proof of 2.} Suppose $\dd(a/B) = 1$.

Assume the extension $B \str A$ is minimal. We want to see that then $\RU(a/B) = \omega$.

Let $C$ be a finitely generated strong $\cl_0$-closed set containing $B$. We claim that if $\tp(a/C)$ forks over $B$, then $\dd(a/C) = 0$.

Suppose $\dd(a/C) \neq 0$. Then $\dd(a/C) = 1$, for $\dd(a/C) \leq \dd(a/B) = 1$.
Clearly, $a \not\in\acl(C)$. As in the proof of part 1., it follows that $A$ and $C$ are $\acl_{\Alg}$-independent over $B$.
Also, since
\[
\delta(A/C) = \delta(A/B) - \ld((A+C)\cap G/A \cap G+ C\cap G),
\]
we have:
\begin{align*}
\ld((A+C)\cap G/A \cap G+ C\cap G) &= \delta(A/B) - \delta(A/C)\\ 
                                   &= \dd(a/B) - \delta(a/C)\\ 
                                   &\leq \dd(a/B) - \dd(a/C)\\
                                   &= 1 - 1\\
                                   &= 0.
\end{align*}
Therefore $A$ and $C$ are in free amalgam over $B$. From this we also get, 
\[
\delta(A+C/C) = \delta(A/B) = 1 = \dd(a/C), 
\]
which implies that $A+C$ is strong. Thus, $A$ and $C$ are in strong free amalgam over $B$, which means that $\tp(a/C)$ does not fork over $B$

Applying the result of the previous part, we obtain that all forking extensions of $\tp(a/B)$ have finite U-rank. Therefore $\RU(a/B) \leq \omega$.

By considering towers of minimal prealgebraic extensions, one sees that there are elements in $G$ of arbitrarily large U-rank over $B$ and, in fact, $\RU(a/B) = \omega$.
% GAP: explain the above!

Without the minimality assumption, the extension $B \str A$ decomposes into a tower of minimal extensions, of which one is a minimal green generic extension and all the other are prealgebraic. From the previous arguments and Lascar's inequalities, we conclude that $\RU(a/B) = \omega + m$ for some natural number $m$.

%%%%%%%%%%%%%%%%%%%%

\textbf{Proof of 3.} Suppose $\dd(a/B) = 2$.

Note that in this case $A = B + \spank(a)$ and the extension $B \str A$ is therefore minimal.

Let $C$ be a finitely generated strong $\cl_0$-closed set containing $B$. Let us see that if $\tp(a/C)$ forks over $B$, then $\dd(a/C) \leq 1$.

Suppose $\dd(a/C) > 1$, i.e. $\dd(a/C) = 2$. Clearly, $a \not\in\acl(C)$. As before, it follows that $A$ and $C$ are $T_{\Alg}$-independent over $B$.

As in the previous parts, we see that
\[
\ld((A+C)\cap G / A\cap G+C\cap G) \leq \dd(a/B) - \dd(a/C) = 2 - 2 = 0.
\]
Therefore $A$ and $C$ are in free amalgam over $B$. Then also $\delta(A+C/C) = \delta(A/B) = 2 = \dd(a/C)$, and hence $A$ and $C$ are in strong free amalgam over $B$.

Thus, all forking extensions of $\tp(a/B)$ have U-rank smaller than $\omega \cdot 2$. Therefore $\RU(a/B) \leq \omega \cdot 2$. 

% GAP-Extra
Looking at towers of minimal extensions, this time adding a green generic extension at the top of the towers of prealgebraic extensions, one sees that, in fact, $\RU(a/B) = \omega \cdot 2$.
\end{proof}

\begin{Lemm} \label{RMRU}
For $T$, Morley rank and U-rank coincide on all 1-types.
\end{Lemm}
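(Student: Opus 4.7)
Since $T$ is stable and $\omega$-stable (Theorem~\ref{Stab}), for every complete type $p$ over a strong finitely generated $\cl_0$-closed set $B$ we have $\RU(p) \leq \RM(p)$; thus I only need to establish the reverse inequality $\RM(p) \leq \RU(p)$ on $1$-types. By superstability and the description of non-forking given by Proposition~\ref{indep}, every $1$-type has a canonical extension that is non-forking over some finite set, and we can reduce to types over a strong $B$ of the form $\scl(b)$ for a finite tuple $b$. The plan is then to use near model completeness (Theorem~\ref{Tnearmodelcomplete}) to extract, for a realization $a$ of $p$, an existential formula $\phi(x) \in p$ whose Morley rank is no greater than the value of $\RU(a/B)$ computed in Lemma~\ref{RU}.

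The key technical input is Lemma~\ref{defbledelta}, which supplies, for each $a$ over a strong set $B$, an existential formula $\tau^\delta_{a,b}(x,b) \in \tp(a/B)$ forcing every realization $a'$ to satisfy $\delta(a'/b) \leq \delta(a/B)$; together with Proposition~\ref{nearmodelcomplete}(1) this yields a corresponding existential formula bounding $\dd$. I then analyze according to $k := \dd(a/B) \in \{0,1,2\}$. When $k=0$ and $\RU(a/B) = n$, the strong closure $\scl(Ba)$ decomposes as a tower of $n$ minimal prealgebraic extensions of $\scl(B)$; by refining $\tau^\delta_{a,b}$ so that, in addition to the predimension bound, it specifies the quantifier-free $L_\Alg$-type of a witnessing green tuple $c$ realizing the tower, we obtain a formula that forces $x$ to sit as a generic point (in $G^n$) of a specific rotund variety. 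By richness and the EC-property, the generic type of such a rotund variety in $G^n$ is stationary and the formula admits exactly $n$ steps of rank-drop, giving $\RM \leq n$. When $k=1$ the element lies in $G$ modulo the $\acl_{T_\Alg}$-closure of $\scl(B)$, and a parallel analysis, combined with the fact that green generic extensions above prealgebraic towers can reach any prescribed finite additional rank, yields $\RM(p) \leq \omega + m$ matching the value from Lemma~\ref{RU}. For $k = 2$, white generic types admit reduction to the $k=1$ case via an additive argument on the two ``coordinates'' of a white generic (as in the proof of Case~4 of Lemma~\ref{satmodT-rich}) and Lascar's inequality for Morley rank, giving $\RM(p) \leq \omega \cdot 2$.

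The main obstacle is the case $k=0$: the existential formula from Lemma~\ref{defbledelta} only bounds the predimension, not the Morley rank. I expect the argument to require combining the predimension bound with a refinement of $\tau^\delta$ that pins down the rotund variety witnessing the tower of prealgebraic minimal extensions (using Lemma~\ref{lemm-rotundity-premin} to definably specify prealgebraic minimality) and then applying the uniqueness of quantifier-free types on rotund subvarieties of $G^n$ given by Lemma~\ref{bnf}. Once the induction on tower length is set up, each step contributes exactly one to the Morley rank, giving the matching bounds $\RM(p) = \RU(p) = \omega \cdot k + m$ in all three cases and completing the proof.
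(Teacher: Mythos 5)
Your overall plan (find a formula in the type that forces realizations to be either ``small'' or generic, and match the values computed in Lemma~\ref{RU}) is in the right spirit, but two essential ingredients are missing, and one step rests on a false principle. First, the reduction you use is not in place: you propose to bound the Morley rank of a definable set directly (``exactly $n$ steps of rank-drop''), and in the case $\dd(a/B)=2$ you invoke ``Lascar's inequality for Morley rank''. Morley rank is not additive and does not satisfy the Lascar inequalities, and bounding $\RM(x=x)$ by $\omega\cdot 2$ cannot be an input here since it is exactly what Theorem~\ref{Ranks} later deduces from this lemma; as stated the $k=2$ case is circular. The paper avoids this by proving a weaker, sufficient criterion: for every global $1$-type $p$ there is $\psi\in p$ isolating $p$ among the global types of U-rank $\geq \RU(p)$; an induction using the coincidence of Morley rank with Cantor rank on $S_1(\bar A)$ then yields $\RM\leq\RU$. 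With that criterion the $k=2$ case is trivial ($x=x$ works, since there is a unique global type of U-rank $\omega\cdot 2$), and the real work is concentrated in the prealgebraic case. Your proposal needs this (or an equivalent) reduction spelled out; without it the ``rank-drop'' count does not bound $\RM$.

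Second, in the case $\dd(a/B)=0$ your isolating formula is not strong enough as described. The right definability tool is Lemma~\ref{semidef-generic} (every realization of $\phi(y,d)$ is either inside the strong set or a generic of $V_d$ over it), not just the predimension-bounding formulas of Lemma~\ref{defbledelta}; more importantly, genericity of a green tuple on the rotund variety does \emph{not} by itself determine its type over $\acl(B)$. The type of a green basis $a'$ is determined by the quantifier-free type of a whole coherent sequence of division points inside $G$, and two green generics of the same variety can carry division sequences with different algebraic types; Lemma~\ref{bnf} only converts an isomorphism of strong closures into an elementary map, it does not provide the ``uniqueness of quantifier-free types on rotund subvarieties of $G^n$'' you appeal to. This is exactly where the Thumbtack Lemma (Theorem~\ref{ThumbtackLemma}) is needed: one first replaces $a'$ by a Kummer generic basis over $\acl(B)$, so that its algebraic type determines the type of the division sequence, and only then does $\phi(y,d)\land\bigwedge_i G(y_i)$ isolate the type among types of U-rank $\geq n$. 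Your proposal omits Kummer genericity entirely, so the key step of the $k=0$ case (and hence of the $k=1$ case, which is built from it) has a genuine gap.
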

\begin{proof}
It is sufficient to prove this for global types, i.e. types over $\bar A$. 

It is well-known and easy to show that for any totally transcendental complete theory, for any (global) type $p$, $\RU(p) \leq \RM(p)$. In order to show that also $\RM(p) \leq \RU(p)$ for all $p$, it is sufficient to prove the following: for every $p \in S_1(\bar A)$, there is a formula $\psi \in p$ such that for all $q \in S_1(\bar A)$, if $\psi \in q$ and $\RU(q) \geq \RU(p)$ then $q=p$ (i.e. $\psi$ isolates $p$ among the types in $S_1(\bar A)$ with U-rank $\geq \RU(p)$).
\footnote{
Explanation for the sufficiency: Assume that for every $p \in S_1(\bar A)$, there is $\psi \in p$ such that $\psi$ isolates $p$ among the types in $S_1(\bar A)$ with U-rank $\geq \RU(p)$.
One can then show, by induction, that for every ordinal $\alpha$, if $\RM(p) \geq \alpha$ then $\RU(\alpha) \geq \alpha$: The cases where $\alpha = 0$ or $\alpha$ is a limit ordinal are easy and do not need the extra assumption. Suppose now that the implication holds for $\alpha$ and $\RM(p) \geq \alpha+1$. Since Morley rank coincides with the Cantor rank on $S_1(\bar A)$ (Proposition 17.17 in \cite{PzMT}), $p$ is an accumulation point of types $p_i \in S_1(\bar A)$ with $\RM(p_i) \geq \alpha$. By the induction hypothesis, $\RU(p_i) \geq \alpha$ and $\RU(p) \geq \alpha$. If $\RU(p) = \alpha$, the assumption yields that there is a formula that isolates $p$ from the $p_i$, hence a contradiction. Thus, $\RU(p) \geq \alpha +1$.
}

Let $p \in S_1(\bar A)$. We shall find $\psi \in p$ that isolates $p$ among the global types of greater or equal U-rank. Let $B \subset \bar A$ be the strong closure of a finite set over which $p$ does not fork. Let $a \in \bar A$ be a realisation of $p|_{\acl(B)}$. Let $A:= \scl(Ba)$. Note that it is sufficient to find a formula $\psi(x)$ over $\acl(B)$ that isolates $p|_{\acl(B)}$ among the types over $\acl(B)$ of greater or equal U-rank. This is what we do in each of the following cases.

\emph{Case 1: $\RU(a/B)$ is finite, i.e. $\dd(a/B) = 0$}. 

Assume $a \not\in\acl(B)$, otherwise the type of $a$ over $\acl(B)$ is obviously isolated. 

Let us assume first that the extension $B \str A$ is minimal. Since $\dd(a/b) = 0$, we can find a $\cl_0$-basis $a'$ of $A$ over $B$ with all coordinates in $G$. Then $a$ is algebraic over $a'$ in the language $L_{\Alg}$ and $\delta(a'/B) = 0$. Let $V(y,d)$ be the algebraic locus of $a'$ over $\acl(B)$. The variety $V(y,d)$ is a minimal prealgebraic rotund variety and therefore we can find a formula $\phi(y,d)$ for $V(y,d)$ as in Lemma~\ref{semidef-generic}. 
We claim that the formula $\phi(y,d) \land \bigwedge_i G(y_i)$ isolates the type of $a'$ over $\acl(B)$ among the types of greater or equal U-rank. Indeed, by the choice of $\phi(y,d)$ and the fact that $\acl(B)$ is strong, the following holds: for every $a''$  satisfying the formula $\phi(y,d)$, either $a'' \subset \acl(B)$ or $a''$ is a generic of $V(y,d)$ over $\acl(B)$. By the Thumbtack Lemma, we may assume that our $a'$ is Kummer generic over $\acl(B)$, which implies that every green generic of $V(y,d)$ over $\acl(B)$ has the same type as $a'$ over $\acl(B)$. Thus, we have: for every $a''$ satisfying $\phi(y,d)$, either $a'' \subset \acl(B)$ or $a''$ has the same type $a'$ over $\acl(B)$. This directly implies that $\phi(y,d) \land \bigwedge_i G(y_i)$ isolates the type of $a'$ over $\acl(B)$ among the types of greater or equal U-rank, as we claimed. 
Since $a$ is algebraic over $\acl(B) \cup a'$, we can find an formula $\theta(x,y)$ over $\acl(B)$ such that $\theta(x,a')$ isolates the type of $a$ over $a'\cup \acl(B)$. Then, using the additivity of finite U-ranks, the type of $a$ over $\acl(B)$ is isolated among the types of greater of equal U-rank by the formula $\psi(x) := \exists y (\theta(x,y) \land \phi(y,d))$.
% maybe explain better. here and at analogous places below, it'd be.

Without the minimality assumption, the extension $B \str A$ decomposes into a tower of minimal prealgebraic extensions $B = A_0 \str \dots \str A_n = A$ with $a \in A_n \setminus A_{n-1}$.
For each $i=1,\dots, n$, let $a^i$ be a green basis of $A_i$ over $A_{i-1}$. Let $\theta(x,y^1,\dots,y^n)$ be a formula over $\acl(B)$ such that $\theta(x,a^1,\dots,a^n)$ isolates the type of $a$ over $\acl(B) \cup a^1 \cup \dots \cup a^n$, which is algebraic. Also, for each $i= 1,\dots,n$, let $\phi(y^i,d^{i-1})$ be a formula with $d^{i-1}\in \acl(A_{n-1})$ that isolates the type of $y^i$ over $\acl(A_{i-1})$ among the types of greater or equal U-rank, which we have seen above is possible to find. Finally, for each $i = 1,\dots,n-1$, let $\theta_i(z^i,y^1,\dots,y^{n-1})$ be a formula over $\acl(B)$ such that $\theta_i(z^i,a^1,\dots,a^{n-1})$ isolates the type of $d^i$ over $\acl(B) \cup a^1,\dots, a^{n-1}$, which is algebraic. Take $\psi(x)$ to be the following formula over $\acl(B)$:
\begin{multline}
\exists y^1 \dots \exists y^n 
\Big(
\theta(x,y^1,\dots,y^n) \land\\
\exists z^1 \dots \exists z^{n-1} 
(\bigwedge_{i=1}^n \phi(y^i,d^{i-1}) 
\land \bigwedge_{i=1}^{n-1} \theta_i(z^i,y^1,\dots,y^{n-1}))
\Big).
\end{multline}
Again, using the additivity of finite U-ranks, one sees that the formula $\psi(x)$ isolates the type of $a$ over $\acl(B)$ among the types of greater or equal U-rank.

\emph{Case 2: $\RU(a/B) = \omega +m$ for some $m \in \omega$, i.e. $\dd(a/B) = 1$}.

Note that if $a \in G$, then $\delta(a/B) = 1 = \dd(a/B)$, and hence $B + \spank(a)$ is strong and $\RU(a/B)=\omega$. Therefore, if $a \in G$, then $\psi(x)$ can be taken to be $G(x)$. 

More generally, whenever the extension $B \str A$ is minimal, $B + \spank(a)$ is strong and (although $a$ need not be in $G$) there is $a'$ in $G$ with $\RU(a'/B) = \omega$ such that $a'$ is a multiple of $a$. In particular, $a$ is algebraic over $a'$ and, a fortiori, over $\acl(B) \cup a'$.  Let $\theta(x,y)$ be a formula over $\acl(B)$ such that $\theta(x,a')$ isolates the type of $a$ over $\acl(B) \cup a'$. Then, the formula $\psi(x)$ can be taken to be $\exists y (\theta(x,y) \land G(y))$.

For arbitrary $a$ with $\dd(a/B) = 1$, the extension $B \str A$ decomposes into a tower of minimal extensions $B = A_0 \str \dots \str A_n = B$, where one is a green generic minimal extension and all others are prealgebraic. By the arguments up to this point, each of these minimal extensions can be dealt with and it is easy to see that the different formulas can be combined as in Case 1 to obtain an appropriate $\psi(x)$.

\emph{Case 3: $\RU(a/B) = \omega \cdot 2$, i.e. $\dd(a/B) = 2$} 

Since there is only one global type of U-rank $\omega \cdot 2$, the formula $\psi(x)$ can be taken to be $x=x$.
\end{proof}

\begin{Thm} \label{Ranks}
For $T$, the Morley rank of the universe is $\omega \cdot 2$ and the Morley rank of $G$ is $\omega$.
\end{Thm}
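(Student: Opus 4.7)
The plan is to deduce both rank computations from the $U$-rank analysis of Lemma~\ref{RU} via the equality of $U$-rank and Morley rank on $1$-types (Lemma~\ref{RMRU}), together with the fact that the Morley rank of a definable set is the supremum of the Morley ranks of the $1$-types consistent with it.

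For $\RM(\bar A) = \omega \cdot 2$: the upper bound is immediate, since for every element $a$ and every strong $\cl_0$-closed finitely generated $B \supseteq X_0$ one has $\dd(a/B) \leq 2$ (a single element contributes at most $2$ to the predimension), whence $\RU(a/B) \leq \omega \cdot 2$ by Lemma~\ref{RU} and hence $\RM(\tp(a/B)) \leq \omega \cdot 2$ by Lemma~\ref{RMRU}. The matching lower bound is obtained by realizing a white generic extension over $X_0$ (case 4 of Lemma~\ref{min-exts}), whose existence in rich structures was established in Section~\ref{sec:existence}: this produces $a$ with $\dd(a/X_0) = 2$, hence Morley rank $\omega \cdot 2$.

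For $\RM(G) = \omega$: the lower bound is witnessed by a green generic element over $X_0$ (case 2 of Lemma~\ref{min-exts}), whose strong closure is the minimal green generic extension $X_0 \str X_0 + \spank(a)$, giving $\RU(a/X_0) = \omega$ by the proof of Lemma~\ref{RU}(2). The delicate point is the upper bound $\RM(G) \leq \omega$, which requires $\RU(a/B) \leq \omega$ for every $a \in G$ and every strong $\cl_0$-closed finitely generated $B \supseteq X_0$. I will argue the following dichotomy: if $a \in B$, then $\RU(a/B)$ is finite; otherwise, since $B$ is strong and $\cl_0$-closed, Lemma~\ref{lemm-ss-acl} ensures no green point of $\acl_{T_\Alg}(B)$ lies outside $B$, so $a \notin \acl_{T_\Alg}(B)$, whence $\trd(a/B) = 1$ and $\delta(B + \spank(a)/B) = 2 - 1 = 1$. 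If $B + \spank(a)$ is already strong, it coincides with $\scl(Ba)$, the extension $B \str \scl(Ba)$ is a single minimal green generic extension, and $\RU(a/B) = \omega$; if not, the strong closure $\scl(Ba)$ strictly contains $B + \spank(a)$ with strictly smaller predimension, forcing $\delta(\scl(Ba)/B) = 0 = \dd(a/B)$ and $\RU(a/B) < \omega$. In either case $\RU(a/B) \leq \omega$, which by Lemma~\ref{RMRU} yields $\RM(G) \leq \omega$.

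The main obstacle is precisely this dichotomy: ensuring that the natural number $m$ appearing in the expression $\RU(a/B) = \omega + m$ of Lemma~\ref{RU}(2) must vanish when $a$ itself is green. A priori the tower decomposing $B \str \scl(Ba)$ might contain prealgebraic layers contributing a positive $m$; what rules this out is the combination of the greenness of $a$ and the absorption property of strong closed sets from Lemma~\ref{lemm-ss-acl}, which together force $\scl(Ba)$ to equal $B + \spank(a)$ whenever $\dd(a/B) = 1$.
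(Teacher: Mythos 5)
Your proof is correct and takes essentially the same route as the paper: both deduce the ranks from Lemma~\ref{RU} and Lemma~\ref{RMRU} together with the fact that the Morley rank of a definable set is the maximum of the Morley ranks of the $1$-types containing it. The dichotomy you prove for green elements (that $\dd(a/B)=1$ forces $B+\spank(a)$ to be strong, so the ``$\omega+m$'' of Lemma~\ref{RU}(2) collapses to $\omega$) is exactly the point the paper leaves implicit in this proof and records in passing in Case~2 of the proof of Lemma~\ref{RMRU}, so you have merely made an implicit step explicit rather than changed the argument.
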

\begin{proof}
From the definitions we know that the Morley rank of a definable set is the maximum of the Morley ranks of the types containing a defining formula for the set. Also, Morley rank and U-rank coincide on all 1-types, by Lemma~\ref{RMRU}, and their values are as in Lemma~\ref{RU}. Thus, we have:
\begin{align*}
\RM(x=x) &= \max \{ \RM(p)\ :\ p \in S_1(\bar A),\ x =x \in p\} = \omega \cdot 2,\\
\RM(G(x))&= \max \{ \RM(p)\ :\ p \in S_1(\bar A),\ G(x) \in p\} = \omega.
\end{align*}
\end{proof}

%%%%%%%%%%%%%%%%%%%%%%

\section{A variant: Emerald points} \label{sec:eme}

In this section we present a variation of the theories of green points in the multiplicative group case. The models of the new theories are expansions of the algebraic structure on the multiplicative group by a subgroup $H$ which is elementarily equivalent to the additive group of the integers \footnote{More generally, one can allow $H$ to have torsion and rather require $H/\Tor H$ to be elementarily equivalent to the additive group of the integers. For the sake of simplicity of notation we shall only work in the case where $H$ is torsion-free, but the more general case is not any more difficult.}. This contrasts with the case of green points where $G$ is divisible. In the new structures we call the elements of the subgroup $H$ \emph{emerald points}; as before, we call the elements outside the distinguished subgroup \emph{white points}. 

% Motivation
The motivation for our interest in these structures comes from Zilber's investigations on connections between model theory and noncommutative geometry; in particular, the content of \cite{ZNCG} and the survey \cite{ZToriSurvey}. In \cite{ZNCG},  a connection is established between the construction of noncommutative tori, which are basic examples of non-commutative spaces, and the model theory of the expansions of the complex field by a multiplicative subgroup of the form
\[
H = \exp(\eps \R + q \Z),
\]
where $\eps = 1 + i \beta$ and $\beta$ and $q$ are non-zero real numbers such that $\beta q$ and $\pi$ are $\Q$-linearly independent (this implies that $H$ is torsion-free).
% COMMENT Remark on the density of H --similarity with G--
% Models - Refer to upcoming paper
In a subsequent joint paper with Boris Zilber we show that, assuming a certain consequence of the Schanuel Conjecture (the Schanuel Conjecture for raising to powers in the field $\Q(\beta i)$), these structures are indeed models of the theories of emerald points constructed here.

% Construction
In what follows, the theories of emerald points are obtained by modifying the construction of the theories of green points presented in the previous sections. As in Section~\ref{sec:mth}, the theories are shown to be near model complete and superstable; they are not, however, $\omega$-stable.\\

%%%%%%%%%%

\noindent\textbf{$\Z$-groups.}
%%% Differences - prelim
Before going into the details, let us review some basic facts about the model theory of the theory of the additive group of the integers, whose interpretability in the theories of emerald points is responsible for the differences with respect to the green case.
The theory of the additive group of the integers eliminates quantifiers after adding to the semigroup language $\{+,0\}$ a predicate $P_m$ for each subgroup of the form $m\Z$, $m \geq 2$, and a constant for $1$. In this expanded language, a structure $(H,\cdot, 1,(P_m), \e)$
%\footnote{Note the multiplicative notation.} 
is elementarily equivalent to $(\Z,+,0,(m\Z),1)$ if and only if the following conditions are met: 
\begin{enumerate}[(i)]
\item $(H,\cdot, 1)$ is a torsion-free abelian group,
\item for every $m \geq 2$, $P_m$ is the set $H^m$ of all $m$-powers in $H$,
\item for every $m \geq 2$, $\e H^m$ generates the quotient group $H/H^m$. 
\end{enumerate}
If $(H,\cdot, 1,(P_m), \e)$ is elementarily equivalent to $(\Z,+,0,(m\Z),1)$, then it is said to be a \emph{$\Z$-group}. We also call this expanded language the \emph{language of $\Z$-groups}.
Note that every congruence equation in the integers of the form
\[
x \equiv k \pmod m,
\]
where $x$ is a variable, $m$ is a positive integer and $k \in \{0,\dots,m-1\}$, has a corresponding congruence equation %better words? add \emph{}?
in any $\Z$-group $(H,\cdot, 1,(P_m), \e)$, that we write multiplicatively as
\[
x \equiv \e^k \pmod m,
\]
and is expressed by the quantifier-free formula $P_m(x \e^{m-k})$. Also, every congruence equation in the integers of the form 
\[
t \equiv t' \pmod m,
\]
where $t$ and $t'$ are terms in the language of $\Z$-groups and $m$ is a positive integer, is equivalent to a Boolean combination of congruence equations of the above simpler form.
Moreover, the complete (quantifier-free) type of an element in a $\Z$-group is determined by the set of congruence equations, of the simple form above, that it satisfies. With this, it is easy to see that the theory of $\Z$-groups is $\lambda$-stable if and only if $\lambda \geq 2^{\aleph_0}$. The theory is thus superstable, non-$\omega$-stable. 

%%%% Differences
With the above remarks in mind and in order to obtain the same quantifier elimination results for the theories of emerald points as in the green case, we work in a language $L$ which in addition to $L_{\Alg} \cup \{ H\}$ also contains predicates $P_m$, for $m \geq 2$, and a constant $\e$; and require the expansion $(H,\cdot, 1,(P_m), \e)$ of the subgroup $(H,\cdot, 1)$ of $A$ to be a $\Z$-group. The obvious limitation with respect to the green case is that our theories of emerald points cannot be $\lambda$-stable for any $\lambda< 2^{\aleph_0}$; in particular, they cannot be $\omega$-stable. This is in fact the the only limitation in terms of stability, for the theories are in fact $\lambda$-stable for all $\lambda \geq 2^{\aleph_0}$, and hence superstable.
%%%%

\subsection{Structures} 

%%%%%%%%%%%%%%
% - Structures

In this section, $\Alg$ is the multiplicative group and we change to multiplicative notation.
% \FIXME{May want to change to additive for the sake of coherence with the rest?}

Let $L$ be the language $L_{\Alg} \cup \{H, (P_m)_{m \geq 2}, \e\}$, where $H$ and each $P_m$ are unary predicates and $\e$ is a constant.

Let $\CC$ be the class of structures $\A = (A,H,(P_m)_{m \geq 2},\e)$ where
\begin{itemize}
\item $A$ is a model of $T_{\Alg}$,
\item $H$ is a subgroup of $A$,
\item $(H, \cdot, 1, (P_m)_{m \geq 2}, \e)$ is a $\Z$-group. 
%i.e. it is elementarily equivalent to $(\Z,+,0,(m\Z)_{m \geq 2},1)$.
\end{itemize}

If $\A$ is a structure in $\CC$, then we call the elements of $H$ \emph{emerald points}. The elements of $A \setminus H$ are called \emph{white points}.

For each $\A$ in $\CC$ we have:
\begin{itemize}
\item a pregeometry $\cl_0^\A$ on $A$ induced by the $\Q$-linear span pregeometry on the $\Q$-vector space $A/\Tor A$, and
\item a submodular predimension function $\delta_H^\A$ with respect to $\cl_0^\A$ defined by: for any finite dimensional $\cl_0^{\A}$-closed $Y \subset A$, 
\[
\delta_H^\A(Y) := 2 \trd(Y) - \ld(Y \cap \cl_0(H)).
\]
\end{itemize}
As in Section~\ref{sec:str}, the above definitions of $\cl_0^\A$ and $\delta_H^\A$ do not actually depend on the ambient structure $\A$ and we thus drop the superindices.

% COMMENT Uniformize notation: span vs \cl_0, \trd vs \trd_{k_0}, \delta vs \delta_H

Let $\Sub\CC$ be the class of substructures of structures in $\CC$ whose domain is a $\cl_0$-closed set and let $\Fin\CC$ be the class of structures in $\Sub\CC$ whose domain has finite $\cl_0$-dimension.

\begin{Rem}
It is useful to note that if $H$ is a $\Z$-group and
$D$ is a divisible torsion-free abelian group, then the direct sum of
$H$ and $D$ is a $\Z$-group (with the obvious interpretations for the symbols of the expanded language). Also, if $H = (H,\cdot,1,(P_m)_M,\e)$ is a $\Z$-group and $D$ and $D_0$ are divisible torsion-free abelian groups such that $H$ and $D_0$ are subgroups of $D$ and $\e\in D_0$, then the intersection of $H$ and $D_0$ is a $\Z$-group.

It follows from the latter observation that for every $\X = (X,H^{\X})$ in $\Sub\CC$, $H^{\X}$ is a $\Z$-group.
\end{Rem}

Let us fix $\X_0 \in \Fin\CC$ and let $\CC_0$ be the class of structures in $\CC$ in which  $\X_0$ embeds strongly. After an identification, every structure in $\CC_0$ is assumed to have $\X_0$ as a strong substructure. 

Also, let $\Sub\CC_0$ be the class of substructures of structures in $\CC_0$ whose domain is a $\cl_0$-closed set containing $X_0$. Equivalently, $\Sub\CC_0$ is the class of structures in $\Sub\CC$ in which $\X_0$ embeds strongly, again identifying $\X_0$ with its image under one such embedding. Finally, let $\Fin\CC_0$ be the class of structures in $\Sub\CC_0$ whose domain has finite $\cl_0$-dimension.

As in Section~\ref{sec:str}, we work in the class $\Sub\CC_0$ and to simplify the notation we shall write simply $\cl_0$ for the localisation $(\cl_0)_{X_0}$ and $\delta_H$ for $(\delta_H)_{X_0}$. We also use the notation $\spank(X)$ for $\cl_0(X)$.

% REMARK If $H$ is allowed to have torsion, then it should be assumed that $X_0$ has a $\cl_0$ basis consisting of elements of $H$. As in Section 4, this is essential in the proof that the predimension inequality over $X_0$ is expressible by a set of first-order sentences.

Note that the $L$-structure $\A_0$ with domain $A_0 = \acl_{\Alg}(X_0)$, $H^{\A_0} = H^{\X_0}$ and the obvious interpretations for the other symbols is in $\CC_0$ and is prime in $\CC_0$ with respect to strong embeddings.

\begin{Rem} \label{rem-dh-dg}
Suppose $\X = (X,H)$ is a structure in $\Sub\CC$. Consider the structure $\X' = (X,G)$, in the expansion of the language $L_{\Alg}$ by a unary predicate, where $G$ is a divisible subgroup of $X$ with $H \subset G \subset \cl_0(H)$. It is clear that $\X'$ is then a structure in $\Sub\CC$ in the sense of the green points construction. Also, if $\X$ is in $\CC$ or $\Fin\CC$, then $\X'$ is in the corresponding class in the sense of the green points construction.

Furthermore, note that for all finite-dimensional $\cl_0$-closed subset $Y$ of $X$, we have
\[
\delta_H^{\X}(Y) = \delta_G^{\X'}(Y).
\]
It follows that for all $\X,\Y \in \Sub\CC$ with $\X \subset \Y$, if $\X'$ and $\Y'$ are such that $G^{\X'} \subset G^{\Y'}$ and $\Tor G^{\X'} = \Tor G^{\Y'}$, then: $\X \str \Y$ if and only if $\X' \str \Y'$. This shows that the new predimension function $\delta_H$ is in a strong sense the same as the predimension function $\delta_G$, which is the reason why many results transfer effortlessly to the new setting.
\end{Rem}

\begin{Rem} \label{rich-em}
Using the previous remark, it is easy to see that the following statements follow from their analogues in the green points construction: the (Asymmetric) Amalgamation Lemma for the class $\Sub\CC_0$ (Lemma~\ref{aap-cc0} and Corollary~\ref{ap-cc0}), the closure of the class $\CC_0$ under unions of strong chains (Lemma~\ref{uc-cc0}), and the extension property from structures in $\Sub\CC_0$ to structures in $\CC_0$ (Lemma~\ref{ext-cc0}).

Thus, as in Section~\ref{sec:str},  we get the existence of rich structures in the class $\CC_0$. Also, just as in that section, we have that all rich structures are models of the same complete first-order theory and all partial isomorphisms between rich structures with strong domain and image are elementary maps.
\end{Rem}

%%%%%%%%%%%%
% - Theories

\subsection{Theories}

%%%%%%%%
% T_0 part

The following two lemmas are directly implied by the corresponding ones in the green case.

\begin{Lemm} \label{lemm-ss-def-eme} 
% copy of \label{lemm-ss-def}
Let $\A = (A,H) \in \CC$. For every complete $L_{\Alg}$-$l$-type $\Theta(y)$, there exists a partial $L_{\Alg}\cup\{H\}$-$l$-type $\Phi_\Theta(y)$ consisting of universal formulas such that for every realisation $c$ of $\Theta$ in $\A$, 
%\footnote{``with $c \in H^l$" must be added if $H$ is allowed to have torsion.}
\[
\text{$\A \vDash \Phi_\Theta(c)$ if and only if $\spank c$ is strong in $\A$.}
\]
\end{Lemm}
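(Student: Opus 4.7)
The plan is to derive the lemma by direct reduction to its green analogue, Lemma~\ref{lemm-ss-def}, via Remark~\ref{rem-dh-dg}. Given an emerald structure $\A = (A, H) \in \CC$, I would set $G := \cl_0(H)$, which is a divisible subgroup of $A$ with $H \subset G \subset \cl_0(H)$; then $\A' := (A, G)$ is a structure in the green class $\CC$ of Section~\ref{sec:str}. By Remark~\ref{rem-dh-dg}, the predimension functions $\delta_H^{\A}$ and $\delta_G^{\A'}$ coincide on all finite-dimensional $\cl_0$-closed subsets of $A$, so a $\cl_0$-closed subset $Y$ of $A$ is strong in $\A$ if and only if it is strong in $\A'$.

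Applying Lemma~\ref{lemm-ss-def} to $\A'$ and $\Theta$ yields a partial type $\Phi'_\Theta(y)$ of universal $L_{\Alg} \cup \{G\}$-formulas characterising the strength of $\spank c$ in $\A'$. To rewrite $\Phi'_\Theta$ in the language $L_{\Alg} \cup \{H\}$, I would use the equivalence
\[
G(x) \iff \bigvee_{M \geq 1} \exists y \bigl( H(y) \land x^M = y \bigr),
\]
which holds because $\cl_0(H) \cap A$ consists of exactly those $x \in A$ having some positive power in $H$ (any torsion contribution is absorbed by raising to a further multiple). Each formula of $\Phi'_\Theta$ has the shape $\forall x \bigl( W(x,y) \land \bigwedge_j G(x_j) \land \neg W^*(x,y) \to \Psi(x,y) \bigr)$. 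Substituting the above equivalence for each $G(x_j)$, distributing the resulting disjunctions over the conjunction to produce one formula per tuple $\vec M \in \N_{\geq 1}^n$, and pulling the existential witnesses in the premise across the implication as universal quantifiers, one obtains, for each such $\vec M$, a universal $L_{\Alg} \cup \{H\}$-formula
\[
\forall x \forall y_1' \cdots \forall y_n' \bigl( W(x,y) \land \bigwedge_j (H(y_j') \land x_j^{M_j} = y_j') \land \neg W^*(x,y) \to \Psi(x,y) \bigr).
\]
Taking $\Phi_\Theta(y)$ to be the collection of all such translated formulas gives the desired partial $L_{\Alg} \cup \{H\}$-type of universal formulas.

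The verification is then tautological: $\A \vDash \Phi_\Theta(c)$ is equivalent to $\A' \vDash \Phi'_\Theta(c)$ by construction of the translation; which is equivalent to $\spank c$ being strong in $\A'$ by Lemma~\ref{lemm-ss-def}; which is equivalent to $\spank c$ being strong in $\A$ by Remark~\ref{rem-dh-dg}. The only technical point requiring attention is preserving the universal form of the formulas under the translation, handled by the standard manoeuvre of pulling existential witnesses in the antecedent of an implication outside as universal quantifiers. No genuinely new ideas beyond those of the green case are required, consistent with the author's remark that this lemma follows directly from its green counterpart.
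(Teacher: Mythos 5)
Your reduction is essentially what the paper intends: the paper offers no proof of Lemma~\ref{lemm-ss-def-eme} beyond the remark that it is ``directly implied'' by the green case, and your route --- pass to $\A' = (A,\cl_0(H))$, note via Remark~\ref{rem-dh-dg} that $\delta_H$ and $\delta_{G}$ (with $G=\cl_0(H)$) literally coincide so the two structures have the same strong subsets, apply Lemma~\ref{lemm-ss-def}, and translate $G(x)$ into universal $L_{\Alg}\cup\{H\}$-formulas using the fact that $\cl_0(H)$ is exactly the set of elements with a positive power in $H$ --- is a correct way of making that precise. The syntactic bookkeeping is fine: the only occurrences of $G$ in the formulas of Lemma~\ref{lemm-ss-def} are in the antecedent, the infinite disjunction distributes to give one universal formula per tuple $(M_1,\dots,M_n)$, and $x_j^{M_j}=y_j'$ is quantifier-free in $L_{\Alg}$ since it is a $k_0$-subvariety of $\Alg^2$.

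One step of your ``tautological'' verification overreaches, however: Lemma~\ref{lemm-ss-def} asserts the equivalence only for realisations $c$ of $\Theta$ with $c\in G^l$, a restriction the author explicitly says his argument cannot remove (unlike Poizat's torsion-free case). So your chain of equivalences justifies the conclusion only for realisations $c$ whose coordinates lie in $\cl_0(H)$, i.e.\ have some positive power in $H$ --- in particular for tuples of emerald points, which is the analogue of the green-side hypothesis and is what the axiomatization of $\CC_0$ (Lemma~\ref{lemm-ss-def2-eme}) actually uses --- but not for an arbitrary realisation of $\Theta$ as your last paragraph claims. This mirrors an imprecision in the statement of Lemma~\ref{lemm-ss-def-eme} itself, where the analogue of the hypothesis $c\in G^l$ has been silently dropped; you should make the restriction explicit rather than assert the unrestricted equivalence, since the latter does not follow from the lemma you cite.
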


% introduce $T^0$.
\begin{Lemm} \label{lemm-ss-def2-eme} 
%copy of \label{lemm-ss-def2}
There exists an $L_{X_0}$-theory $T^0$ such that for every $L_{X_0}$-structure $\A = (A,H)$ in $\CC$,
$(A,H) \models T^0$ if and only if $(A,H)$ is in $\CC_0$.
\end{Lemm}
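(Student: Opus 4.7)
My plan is to mimic exactly the proof of Lemma~\ref{lemm-ss-def2}, assembling $T^0$ as a union of several first-order axiom sets, one for each defining condition of $\CC_0$, and then using Lemma~\ref{lemm-ss-def-eme} to handle the only non-obvious clause, namely strongness of $X_0$. Concretely, I would let $T^0$ consist of:
\begin{enumerate}
\item the axioms of $T_{\Alg}$, which ensure that the underlying $L_{\Alg}$-reduct of $\A$ is of the form $\Alg(K)$ for some algebraically closed field $K \supset k_0$;
\item universal axioms stating that $H$ is a subgroup of the ambient group;
\item the axioms of the theory of $\Z$-groups in the signature $\{H,(P_m)_{m\geq 2},\e\}$, namely: $H$ is torsion-free, each $P_m$ is interpreted as $H^m$, and $\e H^m$ generates $H/H^m$ for every $m \geq 2$ (all of these are first-order expressible, as recalled in the review of $\Z$-groups preceding the subsection);
\item the quantifier-free $L_{X_0}$-diagram of $\X_0$, which guarantees that the tuple of interpretations of the constants has the same quantifier-free $L$-type in $\A$ as $X_0$ does in $\X_0$.
\end{enumerate}

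The set (1)--(4) axiomatizes precisely the structures $\A \in \CC$ in which the map sending each constant to its interpretation is an $L$-embedding of $\X_0$ into $\A$; what remains is to force this embedding to be strong. For this I would pick a $\cl_0$-basis $c^0$ of $X_0$, let $\Theta(y) := \qftp_{L_{\Alg}}(c^0)$ be its $L_{\Alg}$-type, and invoke Lemma~\ref{lemm-ss-def-eme} to obtain a partial universal $L$-type $\Phi_{\Theta}(y)$ that, on realisations of $\Theta$, expresses strongness of $\spank c^0$. Adding $\Phi_{\Theta}(c^0)$ as a set of axioms then yields an $L_{X_0}$-theory whose models are exactly the structures in $\CC_0$.

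There is no substantive obstacle here: the entire difficulty in axiomatizing $\CC_0$ is the definability of strongness, and this has already been dispatched by Lemma~\ref{lemm-ss-def-eme}, which in turn reduces (via Remark~\ref{rem-dh-dg}) to the green-case Lemma~\ref{lemm-ss-def}. The only point where one must be slightly careful, in contrast to the green case, is in step (3): the language now contains the predicates $(P_m)$ and the constant $\e$, and one must verify that the corresponding quantifier-free type of $c^0$ in (4), together with the $\Z$-group axioms applied to $H$, suffices to fix the full $L$-isomorphism type of $H^{\X_0}$ as a substructure of $H^{\A}$. This is immediate from quantifier elimination in the theory of $\Z$-groups in the expanded language, as recalled just before the subsection.
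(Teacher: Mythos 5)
Your proposal is correct and coincides with the paper's intended argument: the paper proves this lemma by simply transplanting the green-case proof of Lemma~\ref{lemm-ss-def2}, i.e.\ exactly your assembly of the $T_{\Alg}$-axioms, the subgroup and $\Z$-group axioms for $(H,\cdot,1,(P_m)_{m\geq 2},\e)$, the quantifier-free $L_{X_0}$-diagram of $\X_0$, and the strongness type $\Phi_\Theta(c^0)$ supplied by Lemma~\ref{lemm-ss-def-eme} for a $\cl_0$-basis $c^0$ of $X_0$. The only (harmless) difference is that the green-case proof takes $c^0$ to consist of green points because Lemma~\ref{lemm-ss-def} is restricted to tuples in $G$, whereas Lemma~\ref{lemm-ss-def-eme} is stated without a colour restriction, so your uncoloured choice of basis is consistent with it.
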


% ASSUMPTION
Henceforth let $T^0$ be a theory as in the above lemma. 
% Can write this better?

%%%%%%%%
% T_1 part

The following two definitions and the two subsequent lemmas are identical to their analogues in the green case.

% define rotundity
\begin{Def}
An irreducible subvariety $W$ of $A^n$ is said to be \emph{rotund} if for every $k \times n$-matrix $M$ with entries in $\End(\Alg)$ of rank $k$, the dimension of the constructible set $M \cdot W$ is at least $\frac{k}{2}$.
%\footnote{Recall that the dimension of a constructible subset of $A^n$ is, by definition, the dimension of its Zariski closure.}
\end{Def}

% define the EC-property
\begin{Def}
A structure $(A,H)$ in $\CC_0$ is said to have the \emph{EC-property} if for every even $n\geq 1$, for every rotund subvariety $W$ of $A^n$ of dimension $\frac{n}{2}$, the intersection $W \cap H^n$ is Zariski dense in $W$. 
%i.e. for every proper subvariety $W'$ of $W$ the intersection $(W \setminus W') \cap H^n$ is non-empty.
\end{Def}

% Recall definability of rotundity
\begin{Lemm} 
For every subvariety $W(x,y)$ of $\Alg^{n+k}$ defined over $k_0$, there exists a quantifier-free $L_{\Alg}$-formula $\theta(y)$ such that for all $A\models T_{\Alg}$ and all $c \in A^k$,
\[
A \vDash \theta(c) \iff W(x,c)\text{ is rotund}.
\]
\end{Lemm}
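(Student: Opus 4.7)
The statement concerns only the algebraic group $\Alg$ and its algebraic subgroups, so the distinguished predicate $H$ plays no role. The plan is therefore to repeat the argument of Lemma~\ref{lemm-rotundity} verbatim in the present setting.

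First I would apply the Weak CIT (Theorem~\ref{weakcit}) to the family $W(x,y)$ to obtain a finite collection of proper algebraic subgroups $C_1,\dots,C_s$ of $\Alg^n$, controlling atypical components of intersections of fibres $W_c$ with cosets of proper algebraic subgroups. Each $C_i$ is defined by a system $M^i\cdot x = 0$ with $M^i$ an $n_i\times n$ matrix of rank $n_i$ over $\End(\Alg)=\Z$. I would then take $\theta(y)$ to be the conjunction of a formula $\theta_0(y)$ expressing that $W_y$ is irreducible and of dimension at least $n/2$, together with formulas $\theta_i(y)$, for $i=1,\dots,s$, expressing that $\dim (M^i\cdot W_y)\geq n_i/2$. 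The existence of such quantifier-free $L_{\Alg}$-formulas follows, as in the green case, from the definable multiplicity property for $ACF_0$ (transferred to $T_{\Alg}$ via the bi-interpretation of Fact~\ref{fact-biin}), the definability of Morley rank in the strongly minimal theory $T_{\Alg}$, and quantifier elimination for $T_{\Alg}$.

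One direction is immediate: if $W_c$ is rotund, then $\dim W_c\geq n/2$ (apply the definition with the identity matrix, or rather with $M$ of rank $n$ on $W_c$ viewed via a generic point), and $\dim(M^i\cdot W_c)\geq n_i/2$ for each $i$, so $\theta(c)$ holds.

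For the converse, assume $\theta(c)$ holds but $W_c$ is not rotund, and derive a contradiction. Pick a matrix $M$ of rank $k\geq 1$ with $\dim M\cdot W_c<k/2$, let $C$ be the subgroup defined by $M\cdot x=0$, take a generic point $b$ of $W_c$ and let $S$ be the irreducible component of $W_c\cap(b+C)$ through $b$. The fibre dimension theorem gives $\dim S=\dim W_c-\dim M\cdot W_c$, hence $\dim S>\dim W_c-k=\dim W_c+\dim(b+C)-n$, so $S$ is atypical. Applying Weak CIT we obtain $i\in\{1,\dots,s\}$ and $b'\in A^n$ with $S\subseteq b'+C_i$ and $S$ typical with respect to $b'+C_i$. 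By replacing $C$ with the smallest algebraic subgroup whose coset through $b$ contains $S$ (which still satisfies $\dim W_c\cap(b+C)>\tfrac12\dim C$) we may assume $C\subseteq C_i$, so the typicality equation collapses to $\dim S=\dim W_c\cap(b'+C_i)+\dim C-\dim C_i$. Rearranging and using $\dim S>\tfrac12\dim C$ yields $\dim W_c\cap(b'+C_i)>\tfrac12\dim C_i$, i.e. $\dim (M^i\cdot W_c)<n_i/2$, contradicting $\theta_i(c)$. The main technical point is this last dimension chase, which is exactly the content of the green-case proof, and nothing in it depends on $H$; hence no obstacle beyond what Lemma~\ref{lemm-rotundity} already overcame.
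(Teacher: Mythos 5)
Your proposal is correct and coincides with the paper's treatment: the paper proves this emerald-case lemma precisely by noting that the statement concerns only $T_{\Alg}$ (the predicate plays no role) and that the green-case proof of Lemma~\ref{lemm-rotundity} applies verbatim, which is exactly what you do. Your reproduction of that proof — Weak CIT for the family, the formulas $\theta_0,\theta_1,\dots,\theta_s$ via the definable multiplicity property and definability of dimension, and the concluding fibre-dimension chase with the minimal subgroup trick — follows the paper's argument step for step.
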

% and copy of

\begin{Lemm}
% copy of ...
There exists a set of $\forall\exists$-$L$-sentences $T^1$ such that for any structure $(A,H)$ in $\CC_0$
\[
(A,H) \vDash T^1 \iff (A,H)\text{ has the EC-property.}
\]
\end{Lemm}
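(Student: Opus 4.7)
My plan is to mimic the axiomatization of the EC-property carried out for the green case, substituting $H$ for $G$ in the axiom scheme. The rotundity definability lemma stated just above provides the only new ingredient needed; everything else transfers verbatim, because the EC-property is stated purely in terms of the distinguished subgroup and the ambient algebraic structure, without reference to whether that subgroup is divisible or a $\Z$-group.

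Concretely, for each even $n \geq 1$, each $k \geq 0$ and each pair $W(x,y), W'(x,y)$ of subvarieties of $\Alg^{n+k}$ defined over $k_0$, I would put into $T^1$ the sentence
\[
\forall y \Big( \big( \theta_W(y) \land \dim W_y = \tfrac{n}{2} \land W'_y \subsetneq W_y \big) \rightarrow \exists x \big( W(x,y) \land \neg W'(x,y) \land \bigwedge_{i=1}^n H(x_i) \big) \Big),
\]
where $\theta_W(y)$ is the quantifier-free $L_\Alg$-formula expressing rotundity of $W_y$ provided by the preceding lemma. That the conditions $\dim W_y = \tfrac{n}{2}$ and $W'_y \subsetneq W_y$ are expressible by quantifier-free $L_\Alg$-formulas uses the definability of Morley rank in the strongly minimal theory $T_\Alg$ together with its quantifier elimination, exactly as in the green case. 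Each instance is then in $\forall\exists$-form in $L$, so the collection $T^1$ has the required syntactic shape.

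Finally I would verify the equivalence. If $(A,H) \in \CC_0$ satisfies every instance of $T^1$, then given a rotund $W \subset A^n$ of dimension $\tfrac{n}{2}$ and a proper subvariety $W'$ of $W$, one writes $W = W_c$ and $W' = W'_c$ for suitable definable families and parameters $c \in A^k$; the hypothesis of the corresponding instance of $T^1$ holds at $c$, so the instance yields a witness in $H^n$ lying in $W \setminus W'$, and this gives Zariski density of $W \cap H^n$ in $W$. Conversely, the EC-property immediately implies each instance by unpacking the definitions. The only place where any content enters is the appeal to the rotundity definability lemma in the emerald setting, and since that lemma was itself an essentially mechanical transcription of its green-case counterpart, I expect no real obstacle here.
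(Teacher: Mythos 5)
Your proposal is correct and matches the paper's approach exactly: the paper states that this lemma and its proof are identical to the green-case analogue, whose proof is precisely the axiom scheme you write down (with $G$ replaced by $H$), using the rotundity definability lemma together with definability of dimension and quantifier elimination in $T_{\Alg}$.
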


% Define theory $T$
% ASSUMPTION
Henceforth, let $T^1$ denote the theory defined in the above proof. Also, let $T := T^0 \cup T^1$. 

The next step is to show that the theory $T$ axiomatizes richness up to $\omega$-saturation. Here the difference with the green case lies in the following: in proving that $\omega$-saturated models of $T$ are rich, it is necessary to show that the following statement holds: 
\emph{
for any minimal prealgebraic strong extension $\X \str \Y$ of structures in $\Fin\CC_0$, the type of a coloured $\cl_0$-basis of $Y$ over $X$ is finitely satisfiable in models of $T$.
}
The difference with the green case is that, unlike there, here said type includes non-trivial information about the divisibility of the elements in the basis inside the coloured group $H$. Lemmas \ref{key-lemma0} and \ref{key-lemma1} provide the missing step in the argument by showing that the EC-property, and hence the axioms in $T$, are sufficiently strong to imply the above statement.

% Key remark/lemma
\begin{Lemm} \label{key-lemma0}
Let $\A$ be a structure in $\CC_0$.
The following are equivalent:
\begin{enumerate}
\item\label{ec1} $\A$ has the EC-property
\item\label{ec2} for every even $n\geq 1$, for every rotund subvariety $W$ of $A^n$ of dimension $\frac{n}{2}$ and every consistent system of congruence equations in the integers on variables $x_1,\dots,x_n$ of the form: 
\[
\{ x_i \equiv k_i \pmod{m_i} : i\in\{1,\dots,n\} \},
\]
with $m_i \geq 2$ and $k_i \in \{0, \dots, m_i\}$, the set of solutions in $H^n$ of the corresponding system in $H$ is Zariski dense in $W$.
\item\label{ec3} for every even $n\geq 1$, for every rotund subvariety $W$ of $A^n$ of dimension $\frac{n}{2}$ and every consistent system of congruence equations in the integers on variables $x_1,\dots,x_n$ of the form: 
\[
\{ x_i \equiv k_{im} \pmod m : i\in\{1,\dots,n\}, m\in\{2,\dots,N\} \},
\]
with $N \in \N$ and $k_{im} \in \{0,\dots,m\}$ , the set of solutions in $H^n$ of the corresponding system in $H$ is Zariski dense in $W$.
\end{enumerate}
\end{Lemm}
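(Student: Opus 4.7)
The plan is to establish the cycle $(1)\Rightarrow(3)\Rightarrow(2)\Rightarrow(1)$, where the last two implications are essentially formal subset arguments, and the core content is $(1)\Rightarrow(3)$, which requires an isogeny construction exploiting the $\Z$-group structure of $H$.

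For $(2)\Rightarrow(1)$: the solution set in $H^n$ of any (2)-system is contained in $H^n$, so density of the former in $W$ at once yields density of $W\cap H^n$ in $W$. For $(3)\Rightarrow(2)$: given a consistent (2)-system $\{x_i\equiv k_i\pmod{m_i}\}$, I would set $N:=\max_i m_i$, choose integers $K_i$ with $K_i\equiv k_i\pmod{m_i}$, and define $k_{im}:=K_i\bmod m$ for each $i$ and each $m\in\{2,\dots,N\}$. The resulting (3)-system is consistent (by construction in $\Z$) and its solution set in $H^n$ is contained in that of the original (2)-system, so (3) supplies the required density.

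The main step $(1)\Rightarrow(3)$ I would argue as follows. Fix a rotund $W\subset A^n$ of dimension $n/2$ and a consistent (3)-system with parameter $N$; let $M:=\mathrm{lcm}(2,\dots,N)$. By the Chinese Remainder Theorem the system is equivalent to $x_i\equiv K_i\pmod M$ for integers $K_i$, and in the $\Z$-group $H$ this is in turn equivalent to $x_i\in\e^{K_i}\cdot H^M$ (by the axiom that $\e H^m$ generates $H/H^m$). Set $\alpha:=(\e^{K_1},\dots,\e^{K_n})\in H^n$ and consider the finite surjective morphism
\[
\psi:A^n\to A^n,\qquad\psi(y)=\alpha\cdot y^M,
\]
whose image on $H^n$ is exactly the solution set of the congruences. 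Let $\widetilde W:=\psi^{-1}(W)=\phi_M^{-1}(\alpha^{-1}W)$, where $\phi_M(y)=y^M$. Each irreducible component $\widetilde W^{(j)}$ has dimension $n/2$, maps onto $W$ under $\psi$, and I claim each is rotund: translation preserves rotundity (since $\dim(m\cdot\alpha V)=\dim(m\cdot V)$ for any rank-$k$ matrix $m$), and if $b$ is generic in a component of $\phi_M^{-1}(V)$ for $V$ rotund then for every such $m$ one has $\trd(b^m)\geq\trd((b^m)^M)=\trd((b^M)^m)\geq k/2$. Now, given any proper subvariety $W_0\subsetneq W$, no $\widetilde W^{(j)}$ lies in $\psi^{-1}(W_0)$ (else $W=\psi(\widetilde W^{(j)})\subset W_0$), so $\psi^{-1}(W_0)\cap\widetilde W^{(j)}$ is a proper closed subset of the rotund $\widetilde W^{(j)}$. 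Applying (1) to $\widetilde W^{(j)}$ produces a point $b\in(\widetilde W^{(j)}\setminus\psi^{-1}(W_0))\cap H^n$, and $\psi(b)$ is then a point of $W\cap H^n$ outside $W_0$ satisfying all the congruences; this gives the desired Zariski density.

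The main obstacle, and the place where the $\Z$-group axioms are crucial, is the reduction from the multi-modulus form of a (3)-system to the single-modulus form $x_i\equiv K_i\pmod M$: it is precisely the axiom that $\e H^m$ generates $H/H^m$ which allows every congruence class to be represented as $\e^{K_i}\cdot b^M$ with $b\in H$, and thereby makes $\psi$ surject $H^n$ onto the solution set. The remaining ingredients — rotundity of translates and of isogeny preimages, and surjectivity of each component of $\widetilde W$ onto $W$ — are standard once the congruences have been rewritten multiplicatively.
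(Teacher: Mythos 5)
Your proof is correct and takes essentially the same approach as the paper: the heart of both arguments is pulling $W$ back along a root-extraction map twisted by powers of $\e$ (the paper uses the coordinatewise maps $y_i \mapsto y_i^{m_i}\e^{k_i}$ on the locus of a root of a generic point and proves $(1)\Rightarrow(2)$, handling $(2)\Rightarrow(3)$ by the Chinese Remainder Theorem afterwards, whereas you apply CRT first to collapse to a single modulus $M$ and prove $(1)\Rightarrow(3)$ via the components of $\psi^{-1}(W)$), then using rotundity of the pulled-back variety and the EC-property, and pushing the resulting points forward along the dominant map. Your explicit check that the preimage components are rotund (via $\trd(b^m)\geq\trd((b^M)^m)\geq k/2$) just fills in a step the paper records as ``easy to see''.
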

\begin{proof}
Clearly, (\ref{ec2}.) implies (\ref{ec1}.) and (\ref{ec3}.) implies (\ref{ec2}.).

Using the Chinese Remainder Theorem, it is easy to see that (\ref{ec2}.) implies (\ref{ec3}.).
 
We now show that (\ref{ec1}.) implies (\ref{ec2}.): 
Let $W$ be a rotund subvariety of $A^n$ of dimension $\frac{n}{2}$ and consider the system of congruence equations in the integers
\[
\{ x_i \equiv k_i \pmod{m_i} : i\in\{1,\dots,n\} \},
\]
where $m_i \geq 2$ and $k_i \in \{0,\dots,m_i-1\}$. Let $C$ be an $\acl_{\Alg}$-closed set over which $W$ is defined and let $a \in A^n$ be a generic point of $W$ over $C$. Let $a' \in A^n$ be such that $(a'_i)^{m_i} \e^{k_i} = a_i$, for $i=1,\dots, n$ and let $W'$ be the locus of $a'$ over $C$. It is easy to see 
% thinking in terms of generics. amplify the matrix appropriately.
that $W'$ is also a rotund subvariety of $A^n$ of dimension $\frac{n}{2}$. Assuming \ref{ec1} holds, the set $W' \cap H^n$ is Zariski dense in $W'$, which implies that the set of solutions of the corresponding system of congruence equations in $H$ is Zariski dense in $W$.
Thus, indeed, \ref{ec1} implies \ref{ec2}. 
\end{proof}
\begin{Lemm} \label{key-lemma1}
Let $\A$ be a structure in $\CC_0$. Assume $\A$ has the EC-property and is $\omega$-saturated. Then for every even $n\geq 1$, for every rotund subvariety $W$ of $A^n$ of dimension $\frac{n}{2}$ and every consistent system of congruence equations in the integers on variables $x_1,\dots,x_n$ of the form: 
\[
\{ x_i \equiv k_{im} \pmod m : i\in\{1,\dots,n\}, m\geq 2 \},
\]
where $k_{im} \in \{0,\dots,m\}$, the set of solutions in $H^n$ of the corresponding system in $H$ is Zariski dense in $W$.
\end{Lemm}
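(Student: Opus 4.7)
The plan is to derive this from Lemma~\ref{key-lemma0}(\ref{ec3}) by a compactness/saturation argument. Fix $W$ rotund of dimension $n/2$, a consistent system $\Sigma = \{x_i \equiv k_{im}\pmod m : 1\le i\le n,\ m\ge 2\}$, and a proper subvariety $W'$ of $W$ defined over some finite parameter set $C \subset A$. In order to show Zariski density, it suffices to find a single point of $(W \setminus W') \cap H^n$ satisfying every congruence in $\Sigma$. Consider the partial $L$-type
\[
p(x_1,\dots,x_n) \;=\; \{W(x),\ \neg W'(x)\}\ \cup\ \{H(x_i) : 1 \le i \le n\}\ \cup\ \{P_m(x_i \cdot \e^{m-k_{im}}) : 1 \le i \le n,\ m \ge 2\},
\]
where the congruence $x_i \equiv k_{im}\pmod m$ is written in the language of $\Z$-groups as $P_m(x_i \cdot \e^{m-k_{im}})$. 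This is a partial type over the parameter set $C$, which is finite, so by the $\omega$-saturation of $\A$ it will be realised as soon as it is finitely satisfiable in $\A$.

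To verify finite satisfiability, take any finite subtype $p_0 \subset p$. Only finitely many congruences appear in $p_0$, say with moduli bounded by some integer $N$. Enlarge $p_0$ by adjoining, for each $i \in \{1,\dots,n\}$ and each $m \in \{2,\dots,N\}$, the congruence $x_i \equiv k_{im} \pmod m$ from $\Sigma$; the resulting finite system is consistent in the integers because it is a subsystem of the globally consistent $\Sigma$. Applying Lemma~\ref{key-lemma0}(\ref{ec3}) to $W$ and this finite system of congruences (which is available because $\A$ has the EC-property), we conclude that the set of its solutions in $H^n$ is Zariski dense in $W$, hence meets the nonempty open set $W \setminus W'$. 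Any such point realises $p_0$ in $\A$, establishing finite satisfiability.

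Thus $p$ is realised in $\A$, providing the required point of $(W \setminus W') \cap H^n$ satisfying every congruence in $\Sigma$. Since $W'$ was arbitrary, the full set of solutions is Zariski dense in $W$. The only nontrivial step is Lemma~\ref{key-lemma0}(\ref{ec3}), which already handled the passage from density in $W$ to density of solutions of arbitrary finite congruence systems; here $\omega$-saturation simply allows us to combine infinitely many congruences at once, and no additional geometric input is needed.
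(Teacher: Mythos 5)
Your proof is correct and is exactly the argument the paper leaves implicit: its proof of this lemma is just ``follows immediately from the previous lemma,'' meaning the same $\omega$-saturation/finite-satisfiability reduction to Lemma~\ref{key-lemma0}(\ref{ec3}) that you spell out. No gaps; you have simply written out the details the paper omits.
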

\begin{proof}
Follows immediately from the previous lemma.
\end{proof}
% \FIXME{Include a similar remark to the above for the green generic?}

As noted before, with the above lemmas at hand we get the following proposition, just as in Section~\ref{sec:str}.

% Prop: $\omega$-saturated models of $T$ =  rich
\begin{Prop} \label{iff-em}
%copy of \label{iff} 
The theory $T$ is complete and its $\omega$-saturated models are precisely the rich structures.
\end{Prop}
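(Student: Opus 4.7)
The plan is to adapt the proof of Proposition~\ref{iff} together with Proposition~\ref{complete} to the emerald setting, feeding in Lemmas~\ref{key-lemma0} and \ref{key-lemma1} at exactly the place where the $\Z$-group structure on $H$ introduces additional quantifier-free content. As in Section~\ref{sec:existence}, Remark~\ref{rich-em} already provides amalgamation, closure under unions of strong chains, and the extension property in $\Sub\CC_0$, hence Lemma~\ref{rich-exist} applies and rich structures in $\CC_0$ exist. Moreover, Remark~\ref{rich-em} also gives that any two rich structures $\A_1,\A_2 \in \CC_0$ are back-and-forth equivalent via the system $\F(\A_1,\A_2)$ of partial $L$-isomorphisms between finite-dimensional strong $\cl_0$-closed subsets, so once we identify rich structures with the $\omega$-saturated models of $T$, completeness follows immediately as in Proposition~\ref{complete}.

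First I would check that every rich $\A \in \CC_0$ is a model of $T$. The axioms $T^0$ hold by definition of $\CC_0$ and Lemma~\ref{lemm-ss-def2-eme}, so only the EC-property needs verification. For a rotund $W \subset A^n$ of dimension $n/2$ and a proper subvariety $W' \subset W$, one builds, just as in Lemma~\ref{lemm-ec-1}, a strong extension $\Y \in \Sub\CC_0$ of $\A$ containing a generic point $b$ of $W$ together with a compatible system of roots $(b^i)_{i \geq 1}$, and sets $H^\Y = H + \langle b^i : i \geq 1 \rangle$, which is still a $\Z$-group by the first remark after the definition of $\CC$; the extension property yields $\A' \in \CC_0$ with $\Y \str \A'$, and richness of $\A$ pulls a witness back into $\A$.

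The core of the argument is showing that every $\omega$-saturated model $\A$ of $T$ is rich. The classification of minimal strong extensions from Lemma~\ref{min-exts} transfers verbatim, giving prealgebraic, emerald generic, algebraic and white generic cases. The algebraic and white generic cases are handled exactly as in the green setting (for the white case, since $\Alg$ is the multiplicative group one uses Case 4 of Lemma~\ref{satmodT-rich} directly). The hard part is the prealgebraic case: given a minimal prealgebraic $\X \str \Y$ with a $\cl_0$-basis $b \in H(Y)^n$ of $Y$ over $X$, the quantifier-free type over $X$ of the full sequence $(b^i)_{i \geq 1}$ now records not only the locus $V = \locus(b^N/k_0(X)\alg)$ but also, for each coordinate, a complete set of congruences $x_j \equiv \e^{k_{jm}} \pmod m$ from the $\Z$-group $H$. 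This is where the green argument breaks down and where Lemma~\ref{key-lemma1} is needed: it shows that in an $\omega$-saturated model of $T$ the solution set in $H^n$ of any consistent (possibly infinite) system of such congruences is Zariski dense in the rotund variety $V$ of dimension $n/2$. Combined with $\omega$-saturation, this realises the full quantifier-free type of $(b^i)_i$ over $X$ in $\A$ and hence produces the required strong embedding. The emerald generic case reduces to the prealgebraic case by the same tower-of-generic-hyperplanes construction as in Case 2 of Lemma~\ref{satmodT-rich}, now delivering emerald points realising any prescribed finite congruence pattern.

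Finally, for the converse direction one argues exactly as in Proposition~\ref{iff}: given a rich $\A$, take an $\omega$-saturated elementary extension $\A'$; by the previous paragraph $\A'$ is rich, and by the emerald analogue of Lemma~\ref{bnf} (whose proof is unchanged) $\A$ and $\A'$ are $L_{\infty\omega}$-equivalent, so $\A$ is $\omega$-saturated. Completeness of $T$ then follows by observing that any two $\omega$-saturated models of $T$ are rich and hence back-and-forth equivalent through $\F$, therefore elementarily equivalent. The only genuinely new obstacle compared with the green case is the simultaneous realisation of congruences in $H^n$ along rotund varieties, which is precisely the content isolated in Lemmas~\ref{key-lemma0} and \ref{key-lemma1}.
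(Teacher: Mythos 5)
Your proposal is correct and follows essentially the same route as the paper: the paper's own proof is just the observation that, with Lemmas~\ref{key-lemma0} and \ref{key-lemma1} supplying the Zariski density of congruence-constrained emerald points on rotund varieties (the only new content of the quantifier-free type of a coloured basis in the prealgebraic case), the arguments of Lemmas~\ref{lemm-ec-1}, \ref{satmodT-rich} and Propositions~\ref{iff}, \ref{complete} go through verbatim, using Remark~\ref{rich-em} for existence of rich structures and the back-and-forth system $\F$. You have simply spelled out that reduction in the same order and with the same ingredients as the paper intends.
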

%(kleine Überlegung: How does the richness of a structure imply that its $H$ is $\omega$-saturated?)

%%%%%%%%%%%%%%%%%%%%%%%%%%%%%

The next theorem gathers the main model-theoretic properties of the theory $T$.

\begin{Thm} \label{theorem:emerald1}
\begin{enumerate}
\item $T$ is near model complete
\item $T$ is $\lambda$-stable if and only if $\lambda \geq 2^{\aleph_0}$. The theory $T$ is therefore superstable, non-$\omega$-stable.
\item In $T$, $\RU(x=x) = \omega \cdot 2$ and $\RU(H(x)) = \omega$.
% May want to state in terms of U-ranks of types.
\end{enumerate}
\end{Thm}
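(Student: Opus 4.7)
For (1), I would reuse the abstract template of Proposition~\ref{nearmodelcomplete} verbatim: it is formulated purely in terms of $\delta$ and strong closures, so the only task is to produce, for every tuple $a$ of a model, an existential $L$-formula $\tau_a^\delta(x)$ bounding $\delta$ from above. The construction in Lemma~\ref{defbledelta} carries over with $G$ replaced by $H$: the classification of minimal strong extensions (Lemma~\ref{min-exts}) has the same four cases (prealgebraic, emerald generic, algebraic, white generic) by Remark~\ref{rem-dh-dg}, and Lemma~\ref{semidef-generic}, Lemma~\ref{lemm-rotundity-premin} live entirely in the algebraic language. Thus in the emerald generic case the formula $H(x)$ plays the role of $G(x)$, and in the prealgebraic case exactly the same existential formula built from $\phi(z,w)$ and $\theta'(w)$ works. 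Note that $(P_m)_{m\ge 2}$ and $\e$ already sit in $L$, so the $\Z$-group part of the type is covered quantifier-free.

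For (2), I would first establish $\lambda$-stability for $\lambda\geq 2^{\aleph_0}$ by adapting Theorem~\ref{Stab}. Let $B\subset\bar A$ be strong of cardinality $\lambda$; we may take $B=\Alg(K)$ with $K$ algebraically closed. By the emerald version of Lemma~\ref{bnf} (Remark~\ref{rich-em}), $\tp(a/B)$ is determined by the isomorphism type of $\scl(Ba)$ over $B$, hence by the quantifier-free $L$-type of a sequence $(a^i)_{i\ge 1}$ refining an $\cl_0$-basis $a$ of $\scl(Ba)$ over $B$ together with a colouring of $a$. The Thumbtack Lemma~\ref{ThumbtackLemma} lets us take $a$ Kummer generic over $B$, so that the algebraic type of the whole sequence is determined by the algebraic type of $a$, giving at most $\lambda$ algebraic choices. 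The only extra freedom is the congruence type of the emerald coordinates, namely their classes in $H/H^m$ for each $m\ge 2$, producing at most $2^{\aleph_0}$ further possibilities. The total is $\lambda\cdot 2^{\aleph_0}=\lambda$. Non-$\omega$-stability is immediate from the interpretability of the $\Z$-group $(H,\cdot,1,(P_m),\e)$: fixing any emerald element $\e$, the family of formulas $\{P_m(\e^{k_m}x):m\ge 2\}$ with varying $(k_m)\in\prod_m\{0,\ldots,m-1\}$ yields $2^{\aleph_0}$ pairwise-incompatible partial types realisable in $H$ over the empty set.

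For (3), since $T$ is superstable by (2), every 1-type has an ordinal-valued $U$-rank and does not fork over some finite set, so it suffices to compute $\RU(a/B)$ for $B\subset\bar A$ finitely generated and strong. The forking criterion of Proposition~\ref{indep} (non-forking $=$ strong free amalgam over a strong base) transfers to the emerald setting unchanged, as its proof uses only the characterisation of types over strong sets by isomorphism type together with stability, both of which we now have. With this in hand, the three cases of Lemma~\ref{RU} go through verbatim using the minimal-extension analysis of Lemma~\ref{min-exts}: $\dd(a/B)=0$ yields finite rank additive over a tower of prealgebraic minimal extensions; $\dd(a/B)=1$ yields $\omega+m$ for some $m\in\omega$, the $\omega$ coming from an emerald generic minimal extension and the $m$ from prealgebraic components above it; $\dd(a/B)=2$ yields $\omega\cdot 2$. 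Taking suprema over 1-types gives $\RU(x=x)=\omega\cdot 2$ and, since realisations of $H(x)$ all have $\dd\le 1$ with the bound attained, $\RU(H(x))=\omega$.

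The main obstacle is the upper bound in (2): without the Thumbtack Lemma, we would overcount algebraic types of root sequences by factors of $2^{\aleph_0}$ at each stage and lose the bound; and without isolating the congruence data from the algebraic data (which is legitimate precisely because Kummer genericity makes the algebraic type of the full sequence depend only on that of $a$), the $2^{\aleph_0}$ many $\Z$-group types would multiply rather than add to the algebraic count. Everything else is a faithful transcription of the green case via the identification in Remark~\ref{rem-dh-dg}.
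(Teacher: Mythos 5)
Your parts (1) and (3) are correct and essentially reproduce the paper's argument: (1) transfers the $\tau$-formulas of Lemma~\ref{defbledelta} via Remark~\ref{rem-dh-dg} into Proposition~\ref{nearmodelcomplete}, and (3) transfers Proposition~\ref{indep} and Lemma~\ref{RU} to get the U-rank values, exactly as the paper does. For (2) your counting scheme is also the paper's in outline (reduce to a strong algebraically closed $B$, use the emerald analogue of Lemma~\ref{bnf} to replace $\tp(a_0/B)$ by the isomorphism type of $\scl(Ba_0)$ over $B$, then count the data determining that isomorphism type), but you route the upper bound through the Thumbtack Lemma~\ref{ThumbtackLemma} and assert that without it ``we would overcount algebraic types of root sequences by factors of $2^{\aleph_0}$ at each stage and lose the bound.'' That claim is wrong, and the paper's own proof shows why: once the algebraic type of the basis $a$ over $B$ is fixed, each root $a^m$ has only \emph{finitely} many possible algebraic types over $B$ and the earlier roots, so the entire root sequence contributes a single factor of at most $2^{\aleph_0}$ (countably many stages, finitely many choices each), which is absorbed because only $\lambda$-stability for $\lambda\geq 2^{\aleph_0}$ is being claimed. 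This is exactly the point of contrast with Theorem~\ref{Stab}: there the Thumbtack Lemma is what removes the residual $2^{\aleph_0}$ and upgrades superstability to $\omega$-stability, whereas for emerald points $\omega$-stability fails anyway, so the lemma is dispensable. Your Kummer-genericity detour is not fatal, but if you keep it you owe an extra step the proposal skips: the roots that must be tracked are the (unique, by torsion-freeness of $H$) $m$-th roots \emph{in $H$} of $a_i\e^{-k_i^m}$, not compatible root systems of $a_i$ itself as in Definition~\ref{Kummergen}; one reduces to the latter by observing that $B$ is algebraically closed and contains $\e$, so the two kinds of roots differ by elements of $B$. The lower bound via the $2^{\aleph_0}$ congruence types in the $\Z$-group $H$ matches the paper.
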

\begin{proof}
For this proof, let $\bar \A = (\bar A,H)$ be a monster model for $T$.
\begin{enumerate}
\item Remark~\ref{rem-dh-dg} implies that with the same formulas $\tau_{a,b}$ as in the green case (see Lemma~\ref{defbledelta}), the sufficient condition for near model completeness found in Proposition~\ref{nearmodelcomplete} is satisfied.

\item Let $\lambda$ be an infinite cardinal and let $B \subset \bar A$ be set of cardinality $\lambda$. Let us show that there are at most $2^{\aleph_0}\cdot \lambda$ many 1-types over $B$. This clearly implies that $T$ is $\lambda$-stable for all $\lambda \geq 2^{\aleph_0}$. Note that $T$ is not $\lambda$-stable for any $\lambda < 2^{\aleph_0}$, for there are $2^{\aleph_0}$ 1-types over the empty set, as there are already in the theory of $\Z$-groups. 

By passing to the algebraic closure, we may assume that $B$ is algebraically closed, hence $\cl_0$-closed and strong. Let $a_0$ be an element of $\bar A$. Let $a\in \bar A^n$ be a $\cl_0$-basis of the set $A = \scl(B a_0)$ over $B$. For each coordinate $a_i$ of $a$ ($i=1,\dots,n$), define a sequence $(r_i^m)_{m \geq 1}$ as follows: $r_i^m := 0$ for all $m$, if $a_i \not\in H$, and, $r^m :=$ the remainder of $a_i$ modulo $m$ in $H$, if $a_i \in H$. Also, for each $m\geq 1$, let $a_i^m$ be 
a choice of $m^{th}$-root of $a_i (r_i^m)^{-1}$ in $H$, satisfying the compatibility condition that for all $m,m' \geq 1$, $(a_i^{mm'})^m = a_i^{m'}$. %Note on notation: on the left hand side of last equality: first superindex is an index, second is a power! Better notation? 

By Proposition~\ref{iff-em} and Remark~\ref{rich-em}, the type of $a_0$ over $B$ is determined by the $L$-isomorphism type of the set $A$ over $B$. The $L$-isomorphism type of $A$ over $B$ is itself determined by the following data, collectively: the set of indices $i \in \{1,\dots,n\}$ for which $a_i$ is in $H$, the sequence $(r_m)_{m \geq 1}$ and the algebraic type of the sequence $(a^m)_{m \geq 1}$ over $B$ (where, as usual, $r_m$ is the tuple with coordinates $r_i^m$ and $a^m$ the tuple with coordinates $a_i^m$).
There are finitely many possibilities for the set of indices, $2^{\aleph_0}$ possibilities for the sequence $(r_m)_{m \geq 1}$ and at most $\lambda \cdot 2^{\aleph_0}$ possibilities for the algebraic type of the sequence $(a^m)_{m \geq 1}$ over $B$. Thus, there are at most $\lambda \cdot 2^{\aleph_0}$ possibilities for the type of $a_0$ over $B$.

\item This is because the U-rank calculations work in the same way as in the green case. However, since the theory is not $\omega$-stable, Morley rank is not bounded; in particular, it does not coincide with $U$-rank. This is due to the interpretability of the theory of $\Z$-groups, where the same occurs. Note that the isolation of types among those of greater or equal U-rank, which we established in the green case to prove that Morley and U- ranks coincide, does indeed fail for the theory of $\Z$-groups.
% COMMENT May want to add something about non-uniqueness of generic types
% Kleine Übelergung: What are $RU$ and $RM in $(\Z,+)$ like? $RU \leq 1$ and $RM$ unbounded?
\end{enumerate}
\end{proof}

%%%%%%%%%%

% \subsection{Models on the complex numbers} \label{subsec-eme-2}

% The following theorem provides models for the theories of emerald points on the complex numbers.

% \begin{Thm} \label{theorem:emerald2}
% Let $\beta$ and $q$ non-zero real numbers such that $\beta q$ and $\pi$ are $\Q$-linearly independent. Let $\eps = 1 + \beta i$ and 
% \[
% H = \exp(\eps \R + q\Z).
% \]
% Assume SC$_K$ holds for $K = \Q(\beta i)$. Then:
% \begin{enumerate}
% \item For every tuple $c \subset \C^*$, there exists a tuple $c' \subset \C^*$ extending $c$, such that $c'$ is strong with respect to the predimension function $(\delta_H)_{c'}$.
% \item The structure $(\C^*,H)$ has the EC-property. Therefore, for every tuple $c \subset \C^*$, strong with respect to $(\delta_H)_c$, the structure $(\C^*,H)_{X_0}$ is a model of the theory $T_{X_0}$, where $X_0 = \spank(c)$ with the structure induced from $(\C^*,H)$.
% \end{enumerate} 
% \end{Thm}

% The first part of the theorem follows directly from the analogous statement in the green case, by Remark~\ref{rem-dh-dg}. For the second part of the theorem, the proof of the analogous statement in the green case applies, simply using the density of $q \Z + \frac{2\pi}{\beta} \Z$ in $\R$, instead of that of the subgroup $Q$, at the very end of the proof.

%%%%%%%%%%%%%%%%%%%%%%%
\bibliography{jd-bib}
\end{document}